\pgfplotsset{compat=1.15}
\newtheorem{thm}{Theorem}[section]
\newtheorem{lem}[thm]{Lemma}
\newtheorem{cor}[thm]{Corollary}
\newtheorem{prop}[thm]{Proposition}
\theoremstyle{definition}
\newtheorem{definition}[thm]{Definition}
\newtheorem{rmk}[thm]{Remark}
\newtheorem{ex}[thm]{Example}
\newtheorem{notation}[thm]{Notation}
\newtheorem{openquest}[thm]{Open question}
\begin{document}
\title{The rapid decay property for pairs of discrete groups}
\author{Indira Chatterji and Benjamin Zarka}

\newcommand{\etg}{\ell^2(G)}
\newcommand{\eog}{\ell^1(G)}
\newcommand{\cstr}{C^*_r(G)}
\newcommand{\etogh}{\ell^{(2,1)}(G,H)}
\newcommand{\cg}{\mathbb{C}G}
\newcommand{\ch}{\mathbb{C}H}
\newcommand{\cshyb}{B^*_r(G,H)}

\begin{abstract}
We generalize the notion of rapid decay property for a group $G$ to pairs of groups $(G,H)$ where $H$ is a finitely generated subgroup of $G$, where typically the subgroup $H$ does not have rapid decay. We deduce some isomorphisms in $K$-theory, and investigate relatively spectral injections in the reduced group $C^*$-algebra. Rapid decay property for the pair $(G,H)$ also gives a lower bound for the probability of return to $H$ of symmetric random walks on $G$.\end{abstract}
\maketitle
\section{Introduction}A \textit{length function} on a group $G$ is a map $\ell: G\rightarrow \mathbb{R}_+$  such that $\ell(e)=0$, where $e$ denotes the neutral element of $G$, $\ell(x)=\ell(x^{-1})$ and $\ell(xy)\leq\ell(x)+\ell(y)$ for all $x,y\in G$. We will use the \textit{word length} associated to a finite generating set of a finitely generated group $G$, defined by $\ell(e)=0$, and for all $x\neq e$ in $G$
\[
\ell(x)=\min\left\{n\in\mathbb{N},\ x=s_1...s_n|\ s_i\in S\cup S^{-1}\right\}.
\]
All word lengths on a finitely generated group $G$ are Lipschitz-equivalent and we will choose one that we denote $\ell$.
A finitely generated group $G$ has the {\it rapid decay property} if the set 
$${\bf{H}}_{\ell}^\infty(G)=\bigcap_{s\geq 0}\underbrace{\{\xi\in\etg\,|\,\xi(1+\ell)^{s}\in\etg\}}_{{\bf H}_{\ell}^s(G)}$$
embeds in the reduced C*-algebra $C^*_r(G)$, where $\ell$ is a length associated to some (or any) finite generating set of $G$. The sets ${\bf H}_{\ell}^s(G)$ are vector spaces and in case where the group has the rapid decay property, then for $s$ large enough are Banach algebras (see \cite{lafforgue2000proof}). In case where $G=\mathbb Z$, then the reduced C*-algebra is *-isometrically isomorphic to the continuous functions over the circle and ${\bf H}_{\ell}^\infty(\mathbb Z)$ are the smooth ones, which are in particular a subalgebra of the continuous functions and $\mathbb Z$ has the rapid decay property.

\noindent
This property traces back to Connes' book \cite{Connes}, with the free group case done by Haagerup in \cite{haagerup1978example}, and Jolissaint in \cite{joli} laying the ground for the theory. For $s$ large enough in a group $G$ with rapid decay, the space $\textbf{H}^s(G)$ is a Banach algebra and its embedding in $C^*_r(G)$ induces an isomorphism in $K$-theory (see \cite{valette}). This is the missing piece to use Lafforgue's work in \cite{lafforgue2002k} to establish the Baum-Connes conjecture for a large class of groups, including all lattices in a Lie group: the co-compact ones are conjectured to have the rapid decay property, but the non-cocompact ones are known not to, due to the presence of an exponentially distorted free abelian subgroup, which is an obstruction to the rapid decay property (see \cite{IC} for a general introduction to the rapid decy property). Another interest of the rapid decay property for a group $G$ is a lower bound for the return probability of the symmetric random walk on $G$, \cite{CPSC}. Even if a large class of groups has the rapid decay property (see \cite{centroid}), the existence either of abelian subgroups of arbitrarily large rank, or of an abelian subgroup with exponetial growth (for the length induced by the ambient group) are obstructions to the property, in fact the only ones so far, and the only one amongst 3-manifold groups, see \cite{CG}.

\noindent
Notice that even though $\eog$ is always a dense Banach sub-algebra of $\cstr$, the embedding is in general not spectral (see Section \ref{first}) but still induces an isomorphism at the level of K-theory when both the Bost and the Baum-Connes conjectures are known, like for the class of aTmenable groups, containing the one of amenable groups. We investigate the following Banach space norm, interpolating between $\etg$ and $\eog$.
\begin{definition}\label{def:(2,1)-norm}
    Given a countable group $G$ and a subgroup $H<G$, we define a {\it hybrid norm} as follows
$$\|f\|_{(2,1)}=\sqrt{\sum_{gH\in G/H} \|f|_{gH}\|_1^2}$$
and we denote by $\etogh$ the closure of $\cg$ with respect to this norm, where for $f\in\cg$, then $f|_{gH}$ is the restriction\footnote{Its $\ell^1$ norm will not depend on wheter one thinks of $f|_{gH}$ as an element of $\cg$ extended by 0 outside of the coset $gH$, or an an element of $\ch$.} of $f$ to the coset $gH\in G/H$. We shall say that the pair $(G,H)$ has the \textit{rapid decay property }, or that $G$ has {\it the hybrid rapid decay property with respect to $H$}
if there exists a positive constant $s$ such that the {\it hybrid Sobolev space of order $s$}
$${\bf{H}}_{\ell}^s(G,H)=\{\xi\in\etg\,|\,\xi(1+\ell)^{s}\in\etogh\}$$
embeds in $\cshyb$, the operator norm closure of $\cg$ acting on $\etogh$ (this is in general not a C*-algebra but a Banach *-algebra).
\end{definition}
When $H=\{e\}$, then the hybrid norm is the $\ell^2$-norm, and when $H=G$, it is the $\ell^1$ norm. If $H$ is a finite subgroup of $G$, then the hybrid norm is equivalent to the $\ell^2$-norm, and when $H$ is of finite index in $G$, it is equivalent to the $\ell^1$ norm, because on finite groups all those norms are equivalent\footnote{More generally, if $H$ and $K$ are two commensurable subgroups in $G$, then both hybrid norms are equivalent.} Hence, we can interpret the rapid decay property for the pair $(G,H)$ as an interpolation between the rapid decay property for $G$ (or for the pair $(G,\{e\})$, which may not hold) and for the pair $(G,G)$, which always holds since $\eog$ is a Banach algebra.
We show in Proposition \ref{thm:cas normal} that in case where $H<G$ is a normal subgroup, then the relative rapid decay property for the pair $(G,H)$ is equivalent to the rapid decay property for the quotient group. We investigate the relationship of the relative property with the rapid decay of the semi-regular representation as studied by Boyer \cite{boyer} (Definition \ref{def:RDrep}) and we obtain a generalization to non-normal subgroups of the stability of the rapid decay property under polynomial growth extensions (see Proposition \ref{prop:H poly}). The amenable obstruction to the rapid decay property gives a similar obstruction for pairs.
\begin{thm}\label{thm:cas coamenable}
Let $G$ be a finitely generated group and $H$ a
co-amenable subgroup of $G$.
Then, the pair $(G,H)$ has the rapid decay property if and only if the quotient graph $G/H$ has polynomial growth.
\end{thm}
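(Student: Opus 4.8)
The plan is to connect the algebra $\cshyb$ with the quasi-regular representation $\lambda_{G/H}$ of $G$ on $\ell^2(G/H)$, where $G/H$ carries the Schreier graph metric $d$, so $d(eH,gH)=\min\{\ell(x):xH=gH\}$, the ball $B^{G/H}_n=\{gH: d(eH,gH)\le n\}$ is the set of cosets meeting the word ball of radius $n$ in $G$, and ``$G/H$ has polynomial growth'' means $N_n:=|B^{G/H}_n|\le C(1+n)^D$ for some $C,D$. The bridge is the coset-summation map $\pi(f)(gH)=\sum_{x\in gH}f(x)$ on $\cg$. From $|\pi(f)(gH)|\le\|f|_{gH}\|_1$ one gets $\|\pi(f)\|_{\ell^2(G/H)}\le\|f\|_{(2,1)}$, so $\pi$ extends to a norm-one map $\etogh\to\ell^2(G/H)$; picking a set-theoretic section $G/H\to G$ shows that every finitely supported $\eta\in\ell^2(G/H)$ has a $\pi$-preimage with $\|\cdot\|_{(2,1)}=\|\eta\|_2$, so $\pi$ is a metric quotient map, and a direct computation gives $\pi(f*\xi)=\lambda_{G/H}(f)\pi(\xi)$ for $f\in\cg$. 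Being an equivariant metric quotient, $\pi$ forces $\|\lambda_{G/H}(f)\|_{B(\ell^2(G/H))}\le\|f\|_{\cshyb}$ for all $f\in\cg$. Conversely, estimating $\phi*\xi$ coset by coset, $\|(\phi*\xi)|_{aH}\|_1\le\sum_{g}|\phi(g)|\,\|\xi|_{g^{-1}aH}\|_1=(\lambda_{G/H}(|\phi|)\bar\xi)(aH)$ where $\bar\xi(cH):=\|\xi|_{cH}\|_1$ defines an element of $\ell^2(G/H)$ of norm $\|\xi\|_{(2,1)}$, whence $\|\phi\|_{\cshyb}\le\|\lambda_{G/H}(|\phi|)\|_{B(\ell^2(G/H))}$ for all $\phi\in\cg$.

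For the implication ``$G/H$ has polynomial growth $\Rightarrow (G,H)$ has rapid decay'' (which needs no co-amenability, and which is also Proposition \ref{prop:H poly}) I would run the Haagerup--Jolissaint argument on the Schreier graph: write $\phi=\sum_m\phi_m$ with $\phi_m$ supported on the $G$-sphere of radius $m$. Since $\phi_m$ meets at most $N_m$ cosets, Cauchy--Schwarz gives $\|\phi_m\|_1\le\sqrt{N_m}\,\|\phi_m\|_{(2,1)}$, and since $\lambda_{G/H}$ is a unitary representation, $\|\lambda_{G/H}(|\phi_m|)\|\le\|\phi_m\|_1\le C^{1/2}(1+m)^{D/2}\|\phi_m\|_{(2,1)}$. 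Summing over $m$, using $\|\phi\|_{\cshyb}\le\|\lambda_{G/H}(|\phi|)\|$, the triangle inequality, and one more Cauchy--Schwarz against $\sum_m(1+m)^{-2}<\infty$ together with the elementary bound $\sum_m(1+m)^{D+2}\|\phi_m\|_{(2,1)}^2\le\|(1+\ell)^{(D+2)/2}\phi\|_{(2,1)}^2$, one obtains $\|\phi\|_{\cshyb}\le C'\,\|(1+\ell)^{(D+2)/2}\phi\|_{(2,1)}$, i.e.\ ${\bf H}_{\ell}^{s}(G,H)$ embeds in $\cshyb$ for $s=(D+2)/2$.

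For the converse, assume $(G,H)$ has rapid decay, so $\|f\|_{\cshyb}\le C\|(1+\ell)^s f\|_{(2,1)}$ for some $s$ and all $f\in\cg$. Co-amenability of $H$ in $G$ means exactly that the trivial representation is weakly contained in $\lambda_{G/H}$ (equivalently $\lambda_{G/H}$ has almost invariant vectors), hence $|\sum_{g\in G} f(g)|=\|1_G(f)\|\le\|\lambda_{G/H}(f)\|\le\|f\|_{\cshyb}$ by the first inequality of the first paragraph. Now for each coset $c\in B^{G/H}_n$ choose $x_c\in G$ with $\ell(x_c)\le n$ and $x_cH=c$, and put $f=\chi_{C_n}$ with $C_n=\{x_c:c\in B^{G/H}_n\}$, which lies in the word ball of radius $n$. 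Then $\sum_{g}f(g)=N_n$, while $\|f\|_{(2,1)}^2=\sum_{cH}|C_n\cap cH|^2=N_n$, since each $c\in B^{G/H}_n$ contains exactly one $x_c$ and the others are empty. Combining, $N_n\le\|f\|_{\cshyb}\le C(1+n)^s\|f\|_{(2,1)}=C(1+n)^s\sqrt{N_n}$, so $N_n\le C^2(1+n)^{2s}$ and $G/H$ has polynomial growth.

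The main obstacle, and the genuinely new point, is the first paragraph: exhibiting $\pi$ as an equivariant \emph{metric} quotient $\etogh\to\ell^2(G/H)$ and identifying $\lambda_{G/H}(|\phi|)$ as the ``coset-averaged'' convolution operator that controls the $(2,1)$-operator norm; once this bridge is in place, one implication is the standard Haagerup--Jolissaint estimate transplanted to the Schreier graph, and the other is the classical amenable obstruction applied to a system of minimal-length coset representatives. (The case $H=\{e\}$ recovers the fact that an amenable group has rapid decay if and only if it has polynomial growth.) The only points requiring care are the coset-by-coset manipulations of the hybrid norm and checking that $\pi$ and the section behave as claimed with respect to left convolution.
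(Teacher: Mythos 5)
Your proposal is correct and follows essentially the same route as the paper: your ``bridge'' is Lemma \ref{lem:operateur quotient vs hybride}, the polynomial-growth direction is Lemma \ref{lem:polygrowth} together with the sphere decomposition underlying Proposition \ref{prop:equivalence rd_H}, and the converse is the paper's characteristic-function argument, with the Leptin-type inequality $\|f\|_1\leq\|f\|_{h}$ for positive $f$ obtained from weak containment of the trivial representation in $\lambda_{G/H}$ rather than from a direct F\o lner computation (this equivalence is the content of Proposition \ref{prop: égalité amenable l^1}). The one place you do slightly less than the paper is the converse, where you only bound balls centred at the base coset $eH$, whereas Definition \ref{def:growth} asks for a bound at every centre: the paper runs the same test-function argument for an arbitrary $\bar k\in G/H$, using representatives of the cosets of $B_{G/H}(\bar k,R)$ supported in $B_G(R+d(H,\bar k))$, and you should do the same to match the stated definition (your forward direction indeed uses only basepoint growth).
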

We also establish the following, as a direct consequence of Proposition \ref{prop:H poly}.
\begin{cor}\label{thm:RD_H relhyp}
Let $G$ be a finitely generated group and $H$ a finitely generated subgroup. If $G$ is hyperbolic relative to $H$ and $H$ has polynomial volume growth, then the pair $(G,H)$ has the rapid decay property.
\end{cor}
An earlier version missed the polynomial growth assumption on the subgroup $H$, and it is sharp as showed by Jvbin Yao in \cite{JYao}.  The pair $(F_3,F_2)$ for instance doesn't have rapid decay, see Example \ref{free}.
We investigate the consequences of the rapid decay property for a pair of groups $(G,H)$ on K-theory.
\begin{thm}\label{prop:isoKtheory}
Let $G$ be a finitely generated group and $H$ be a subgroup of $G$, such that the pair $(G,H)$ has the rapid decay property. Then, there is $s$ large enough such that the space $\textbf{H}_\ell^s(G,H)$ is a Banach algebra and the natural inclusion in $\cshyb$ induces an isomorphism in K-theory, for any word length $\ell$ associated to a finite generating set of $G$. Moreover, when the subgroup $H$ has subexponential growth with respect to the induced length, this isomorphism extends to an isomorphism with the K-theory of $\cstr$ (Corollary \ref{cor: cas spectral dans C*}).
\end{thm}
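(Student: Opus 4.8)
The plan is to follow the now--classical scheme of Jolissaint, Lafforgue and Bost for passing from the $K$-theory of a ``smooth'' Banach subalgebra to that of its completion: one exhibits $\textbf{H}_\ell^s(G,H)$ as a dense, relatively spectral Banach subalgebra of $\cshyb$ (resp.\ of $\cstr$) and then invokes the general fact that a dense relatively spectral inclusion of Banach algebras induces an isomorphism in $K$-theory. Thus there are three things to establish: (1) for $s$ large, $\textbf{H}_\ell^s(G,H)$ is a Banach algebra under convolution; (2) the inclusion $\textbf{H}_\ell^s(G,H)\hookrightarrow\cshyb$ is dense and relatively spectral; (3) when $H$ has subexponential growth for the induced length, $\textbf{H}_\ell^s(G,H)$ is moreover a dense relatively spectral subalgebra of $\cstr$, whence $K_*(\cshyb)\cong K_*(\cstr)$.

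For (1) I would start from the hybrid rapid-decay inequality: there are $C>0$ and $s_0$ with $\|f\|_{\cshyb}\le C\,\|f\|_{\textbf{H}_\ell^{s_0}(G,H)}$ for $f\in\cg$. Write $f=\sum_k f_k$ and $g=\sum_l g_l$ for the decompositions into spheres of $G$; then $f_k*g_l$ is supported on spheres $S_m$ with $|k-l|\le m\le k+l$. Estimating $\|f_k*g_l\|_{(2,1)}$ via the left $\cshyb$-module structure of $\etogh$ (and, where the left-hand factor carries the larger support, by a coset-by-coset estimate, using that $\ch$ still acts isometrically on the right of $\etogh$), then reassembling over $m$ and over cosets and using $\sum_k(1+k)^{-t}<\infty$ for $t>1$, one obtains $\|f*g\|_{\textbf{H}_\ell^s(G,H)}\le C_s\,\|f\|_{\textbf{H}_\ell^s(G,H)}\,\|g\|_{\textbf{H}_\ell^s(G,H)}$ for $s$ beyond an explicit threshold depending linearly on $s_0$. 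Completeness of $\textbf{H}_\ell^s(G,H)$ together with density of $\cg$ then makes it a Banach algebra. This step is the delicate one, precisely because $\etogh$ is only a \emph{one-sided} $\cg$-module: right translation by a general element of $G$ can inflate the hybrid norm by a factor growing with the length, so the Haagerup/Jolissaint ``bound by the smaller factor'' device is not directly available and must be replaced by the coset-wise analysis above, the point being to keep that reassembly lossless enough to leave the threshold on $s$ finite.

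For (2) density is clear from $\cg\subseteq\textbf{H}_\ell^s(G,H)\subseteq\cshyb$. Relative spectrality reduces, as usual, to an equality of spectral radii on the dense subalgebra $\cg$: if $a\in\cg$ has support in the ball of radius $r$, then $a^n$ is supported in the ball of radius $nr$, hence
\[
\|a^n\|_{\textbf{H}_\ell^s(G,H)}=\|a^n(1+\ell)^s\|_{(2,1)}\le(1+nr)^s\,\|a^n\|_{(2,1)}\le(1+nr)^s\,\|a^n\|_{\cshyb},
\]
the last inequality because $\|b\|_{(2,1)}=\|b*\delta_e\|_{(2,1)}\le\|b\|_{\cshyb}$ (as $\|\delta_e\|_{(2,1)}=1$). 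Taking $n$-th roots and letting $n\to\infty$ gives $\rho_{\textbf{H}_\ell^s(G,H)}(a)\le\rho_{\cshyb}(a)$, and the reverse inequality is automatic from the continuous inclusion; together with density this yields that the inclusion is isospectral. Running the same argument on matrix coefficients --- equivalently, passing through the Fréchet algebra $\textbf{H}_\ell^\infty(G,H)=\bigcap_s\textbf{H}_\ell^s(G,H)$, which is a Fréchet algebra stable under holomorphic functional calculus in $\cshyb$ --- one gets relative spectrality of all amplifications $M_n(\textbf{H}_\ell^s(G,H))\subseteq M_n(\cshyb)$, hence the isomorphism in $K$-theory.

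For (3) one runs the same mechanism with $\cshyb$ replaced by $\cstr$. The place where the subexponential growth of $H$ enters is the coset-by-coset comparison of the hybrid $\ell^1$-norm with the $\ell^2$-norm: if $a\in\cg$ has support in the ball of radius $r$ and $gH$ is a coset (with $g$ a shortest representative) meeting the ball of radius $nr$, then translating by $\delta_{g^{-1}}$ turns $a^n|_{gH}$ into an element of $\ch$ supported in the $H$-ball of radius $2nr$ for the induced length, of cardinality $\beta(2nr)$; Cauchy--Schwarz gives $\|a^n|_{gH}\|_1\le\sqrt{\beta(2nr)}\,\|a^n|_{gH}\|_2$, whence $\|a^n\|_{(2,1)}\le\sqrt{\beta(2nr)}\,\|a^n\|_2\le\sqrt{\beta(2nr)}\,\|a^n\|_{\cstr}$ and so $\|a^n\|_{\textbf{H}_\ell^s(G,H)}\le(1+nr)^s\sqrt{\beta(2nr)}\,\|a^n\|_{\cstr}$. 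Since $\beta$ is subexponential, $\beta(2nr)^{1/(2n)}\to 1$, so $\rho_{\textbf{H}_\ell^s(G,H)}(a)\le\rho_{\cstr}(a)$; combined with the well-definedness and continuity, for $s$ large, of the inclusion $\textbf{H}_\ell^s(G,H)\hookrightarrow\cstr$ --- which one checks on the sphere decomposition by the same coset-wise estimate, the subexponential cardinalities again disappearing after the summation --- this gives as in (2) that $\textbf{H}_\ell^s(G,H)\hookrightarrow\cstr$ is relatively spectral, hence an isomorphism in $K$-theory. Composing with the inverse of the isomorphism from (2) yields $K_*(\cshyb)\cong K_*(\cstr)$, which is Corollary \ref{cor: cas spectral dans C*}. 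The hardest part throughout remains the one-sided-module estimate of step (1) and its analogue in the last sentence of step (3); everything else is the standard spectral-radius/holomorphic-functional-calculus machinery.
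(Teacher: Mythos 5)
Your overall architecture (make $\textbf{H}_\ell^s(G,H)$ a Banach algebra, show the inclusions are dense and relatively spectral, invoke Nica's theorem) is the same as the paper's, but the decisive issue is your step (1), and there it is a genuine gap rather than a deferrable technicality. You rightly observe that $\etogh$ is only a left module over $\cshyb$, so the Haagerup--Jolissaint device of bounding each product of spherical pieces by the factor with the smaller support needs a substitute when the left factor carries the larger support; but the substitute you propose --- a coset-by-coset estimate using that $\ch$ acts isometrically on the right of $\etogh$ --- is never formulated as an inequality, and it cannot yield one: the right-hand factor $g_l$ is supported on a sphere of $G$, not in $H$ nor in a single coset, and right convolution by a single group element is already unbounded on $\etogh$. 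Take $G=F_2=\langle a,b\rangle$ and $H=\langle a\rangle$, a pair with relative rapid decay by Theorem \ref{thm:RD_H relhyp}: the function $u=\sum_{i=1}^N\delta_{a^ib}$ has $\|u\|_{(2,1)}=\sqrt N$ because the cosets $a^ibH$ are pairwise distinct, while $u*\delta_{b^{-1}}=\sum_{i=1}^N\delta_{a^i}$ lies in the single coset $H$ and has $\|u*\delta_{b^{-1}}\|_{(2,1)}=N$, although $\|\delta_{b^{-1}}\|_h=1$; right translation can merge arbitrarily many left cosets. Worse, weighting this example (take $f=\sum_{i\le N}(2+i)^{-s}i^{-1}\delta_{a^ib}$ and $g=\delta_{b^{-1}}$) gives $\|f\|_{s,(2,1)}=O(1)$ and $\|g\|_{s,(2,1)}=2^s$, while $\|f*g\|_{s,(2,1)}\ge(2/3)^s\log N$, so no reassembly over spheres and cosets, however careful, can produce $\|f*g\|_{s,(2,1)}\le C_s\|f\|_{s,(2,1)}\|g\|_{s,(2,1)}$ for this pair. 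In other words, the inequality your step (1) aims at fails in an example covered by the hypotheses. Be aware that the paper's own proof of this step (Proposition \ref{prop: Bs est une Banach alg}) circumvents the one-sidedness by the right-module bound $\|u*v\|_{(2,1)}\le\|u\|_{(2,1)}\|v\|_h$, which the same example contradicts; so the difficulty you flag is precisely the sore point, and any correct treatment must either change the norm, impose extra conditions on $H$ (for instance that right translations permute left cosets with bounded multiplicity, as for commensurated subgroups), or reformulate the Banach-algebra statement.

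Your steps (2) and (3) are sound modulo (1), and they follow a partly different route from the paper. For the inclusion into $\cshyb$ the paper does not argue via spectral radii but via the derivation $d=[m_\ell,\cdot\,]$, the identification $\bigcap_{k}{\rm Dom}(d^k)={\bf H}^\infty_\ell(G,H)$, Ji's theorem on smooth subalgebras, and then Nica's result; your argument, namely $\|a^n\|_{s,(2,1)}\le(1+nr)^s\|a^n\|_{(2,1)}\le(1+nr)^s\|a^n\|_h$ giving equality of spectral radii on $\cg$, is the criterion the paper itself endorses and uses elsewhere, so it is an acceptable alternative. For (3), your direct estimate $\|a^n\|_{s,(2,1)}\le(1+nr)^s\sqrt{\beta(2nr)}\,\|a^n\|_2\le(1+nr)^s\sqrt{\beta(2nr)}\,\|a^n\|_*$ is correct and in fact shortcuts the paper's combination of Lemma \ref{lem: H sous-exp est sobolev spectral} with a H\"older interpolation. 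One sub-point needs repair: the continuity of ${\bf H}^s_\ell(G,H)\hookrightarrow\cstr$ does not come from coset-wise subexponential cardinalities ``disappearing after summation'' --- the number of cosets met by a sphere of $G$ is not controlled by the growth of $H$ --- but from $\|f\|_*\le\|f\|_h$ for positive $f$ (Lemma \ref{lem:operateur quotient vs hybride}) together with relative rapid decay and the decomposition into positive parts, as in Lemma \ref{lem: Bs s'injecte dans C*}. With that replacement, your (2)--(3) deliver the stated K-theory isomorphisms, conditionally on a correct proof of (1).
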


\smallskip

Similarly to the rapid decay property, for a pair of groups $(G,H)$, the relative rapid decay property also gives information on random walks, but over a Schreier graph of the left translation of $G$ on $G/H$.
\begin{thm}\label{thm:borne inf pour RW}
Let $G$ a finitely generated group, $H$ a subgroup of $G$ such that the pair $(G,H)$ has rapid decay. Then, there is a constant $C$ such that for any finitely supported symmetric measure $\mu$ on $G$, whose support generates $G$, we have, for all $n\in\mathbb{N}^*$ 
\[
n^{-2d}\leq C \left(\rho_{\mu}^{G/H}\right)^{-2n}P_{2n}(H,H)
\]
where $d$ is the exponent of the rapid decay property for the pair $(G,H)$, $P_{2n}(H,H)$ the probability of the return to the coset $H$ after $2n$ steps and $\rho_{\mu}^{G/H}$ the spectral radius of the random walk on $G/H$ (Definition \ref{def:spectralRad}).
\end{thm}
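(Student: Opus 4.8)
The plan is to transplant to the hybrid side the classical argument relating property RD to random-walk return probabilities (cf. \cite{CPSC}). Write $\pi$ for the left-convolution action of $\cg$ on $\etogh$, so that $\cshyb$ is the operator-norm closure of $\pi(\cg)$; since $\cg$ is dense in $\etogh$ it suffices to work with finitely supported functions throughout. Recall that $P_{2n}(H,H)=\sum_{h\in H}\mu^{*2n}(h)$ is the probability that the $\mu$-random walk started at $e$ lies in $H$ at time $2n$, equivalently the return probability at the base point $eH$ of the walk induced on the Schreier graph $G/H$. The proof splits into three estimates combined at the end.

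\emph{Step 1: the bookkeeping identity $P_{2n}(H,H)=\|\mu^{*n}\|_{(2,1)}^{2}$.} Writing $f=\mu$ and expanding $f^{*2n}=f^{*n}*f^{*n}$,
\[
P_{2n}(H,H)=\sum_{g\in G}f^{*n}(g)\sum_{h\in H}f^{*n}(g^{-1}h).
\]
Because $f^{*n}\ge 0$ the inner sum is $\|f^{*n}|_{g^{-1}H}\|_{1}$, which depends only on the right coset $Hg$; summing $g$ over right cosets yields $\sum_{Hc\in H\backslash G}\|f^{*n}|_{Hc}\|_{1}\,\|f^{*n}|_{c^{-1}H}\|_{1}$. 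Since $\mu$ is symmetric so is $f^{*n}$, whence $\|f^{*n}|_{Hc}\|_{1}=\|f^{*n}|_{c^{-1}H}\|_{1}$, and as $Hc\mapsto c^{-1}H$ is a bijection $H\backslash G\to G/H$ the sum equals $\sum_{gH\in G/H}\|f^{*n}|_{gH}\|_{1}^{2}=\|\mu^{*n}\|_{(2,1)}^{2}$. (This specializes correctly to $P_{2n}(e,e)=\|\mu^{*n}\|_{2}^{2}$ when $H=\{e\}$ and to the identity $1=1$ when $H=G$.)

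\emph{Step 2: lower bound by the spectral radius on $G/H$.} Define $\Phi\colon\cg\to\ell^{2}(G/H)$ by $\Phi(\xi)(gH)=\sum_{x\in gH}\xi(x)$. Fiberwise the triangle inequality gives $|\Phi(\xi)(gH)|\le\|\xi|_{gH}\|_{1}$, so $\Phi$ is norm-decreasing from the $(2,1)$-norm to $\|\cdot\|_{2}$; and choosing one representative per coset shows every finitely supported $\eta$ is $\Phi(\xi)$ for some $\xi$ with $\|\xi\|_{(2,1)}=\|\eta\|_{2}$, so $\Phi$ is a metric surjection. A one-line computation gives $\Phi\circ\pi(\delta_{g})=\lambda_{G/H}(g)\circ\Phi$ for the quasi-regular representation $\lambda_{G/H}$ on $\ell^{2}(G/H)$, hence $\Phi\circ\pi(\mu^{*n})=M_{\mu}^{\,n}\circ\Phi$ with $M_{\mu}=\sum_{g}\mu(g)\lambda_{G/H}(g)$. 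Surjectivity without norm increase then forces $\|\pi(\mu^{*n})\|_{\cshyb}\ge\|M_{\mu}^{\,n}\|_{B(\ell^{2}(G/H))}$; since $\mu$ is real and symmetric $M_{\mu}$ is self-adjoint, so this is $\|M_{\mu}\|^{n}=(\rho_{\mu}^{G/H})^{n}$ by Definition \ref{def:spectralRad}.

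\emph{Step 3: upper bound by hybrid rapid decay, and conclusion.} The relative rapid decay hypothesis with exponent $d$ furnishes a constant $C_{d}$ with $\|\pi(\xi)\|_{\cshyb}\le C_{d}\,\|\xi(1+\ell)^{d}\|_{(2,1)}$ for all $\xi\in\cg$. As $\mu^{*n}$ is supported in the ball of radius $Rn$, $R=\max_{t\in\mathrm{supp}\,\mu}\ell(t)$, this gives $\|\pi(\mu^{*n})\|_{\cshyb}\le C_{d}(1+Rn)^{d}\|\mu^{*n}\|_{(2,1)}$. Chaining Steps 1--3,
\[
(\rho_{\mu}^{G/H})^{n}\le C_{d}(1+Rn)^{d}\sqrt{P_{2n}(H,H)},
\]
and squaring, using $1+Rn\le(1+R)n$ for $n\ge 1$, and rearranging yields the claim with $C=C_{d}^{2}(1+R)^{2d}$. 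The one genuinely new ingredient is Step 1: the hybrid $(2,1)$-norm of $\mu^{*n}$ turns out to compute exactly the probability of return to the coset $H$; the delicate point is the left/right coset bookkeeping there, and it is precisely the symmetry of $\mu$ that converts the right-coset sum into the left-coset sum defining $\|\cdot\|_{(2,1)}$. Step 2 is the hybrid transcription of the standard fact that the norm of a convolution operator dominates the relevant spectral radius, and Step 3 is immediate from the definition of the relative property.
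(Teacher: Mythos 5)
Your proof is correct and follows the same skeleton as the paper's: your Step 1 is exactly Lemma \ref{lem:proba de retour sur H} (same computation, same use of the symmetry of $\mu$), your Step 3 is the rapid decay estimate used in the paper's proof (you are in fact more careful than the paper in tracking the constant, which in both proofs depends on the support radius $R$ of $\mu$ through $(1+R)^{2d}$ -- worth noting against the statement's ``for any $\mu$'' phrasing), and your Step 2 re-derives Lemma \ref{lem:operateur quotient vs hybride}(1) via the intertwiner $\Phi=\pi_\sharp$ of Remark \ref{rmk:equiv(2,1)}. The one genuine difference is that the paper routes Step 2 through Proposition \ref{prop:RWspec=hyb}, i.e.\ the full equality $\rho_\mu^{G/H}=\|\mu\|_{h}=\|\lambda_{G/H}(\mu)\|_*$, whose hard direction requires the spectral theorem, together with the identity $\|\mu\|_h^{2n}=\|\mu^{(n)}\|_h^2$; you observe, correctly, that only the chain $(\rho_\mu^{G/H})^n\le\|\lambda_{G/H}(\mu)\|^n=\|\lambda_{G/H}(\mu)^n\|\le\|\mu^{*n}\|_{h}$ is needed, so the spectral-theorem step can be bypassed -- a small but real economy. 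One repair is needed, though: your justification ``$\|M_\mu\|=\rho_\mu^{G/H}$ by Definition \ref{def:spectralRad}'' is not legitimate, since the paper defines $\rho_\mu^{G/H}$ as $\limsup_n P_{2n}(H,H)^{1/2n}$, not as an operator norm; the equality you invoke is precisely the content of Proposition \ref{prop:RWspec=hyb}. Fortunately you only need the inequality $\rho_\mu^{G/H}\le\|\lambda_{G/H}(\mu)\|$, which is one line: $P_{2n}(H,H)=\langle\lambda_{G/H}(\mu)^{2n}\delta_H,\delta_H\rangle\le\|\lambda_{G/H}(\mu)\|^{2n}$. Replace the appeal to the definition by this computation (or cite Proposition \ref{prop:RWspec=hyb}) and the argument is complete.
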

The text is organized as follows. Section \ref{first} explores the rapid decay property for pairs of groups, comparing it with the quasi-regular representation, and gives equivalent definitions for the rapid decay property for pairs. We shall see the first examples and the case where $H$ is a normal subgroup there (Proposition \ref{thm:cas normal}). Section \ref{relative} is devoted to the proofs of Theorems \ref{thm:cas coamenable} and \ref{thm:RD_H relhyp}. Section \ref{consequences} discusses the consequences of the rapid decay property for pairs in K-theory and finishes the proofs of Theorems \ref{prop:isoKtheory} and \ref{thm:borne inf pour RW}.
\subsection*{Acknowledgements} The material in this paper is part of the second author's PhD dissertation, and both authors thank Christophe Pittet and Hervé Oyono-Oyono for a careful reading of the material, as well as Fran\c{c}ois Gautero, Adrien Boyer, Maria Paula Gomez-Aparicio and Goulnara Arzhantseva for interesting conversations. We are also very grateful to Jvbin Yao for pointing out a mistake in an earlier version of the paper and for pointing out the argument in \cite{JYao}.
\section{Basic definitions and first observations}\label{first}
For a discrete group $G$, the {\it convolution product} of $f,\phi\in\cg$ is given by
\[f*\phi(x)=\sum_{z\in G} f(z)\phi(z^{-1}x)\]
for any $x\in G$. This corresponds to the algebra product on $\cg$ when one identifies an element $\gamma\in G$ with $\delta_\gamma$ the characteristic function on the singleton $\{\gamma\}$. This product extends to a Banach algebra product on $\eog$ and to a C*-algebra product on $\cstr$, but in general not to $\etg$ or $\etogh$ (see Definition \ref{def:(2,1)-norm}) unless $G$ is finite or the subgroup $H$ has finite index in $G$. 
\begin{definition}\label{def:hybrid} Given a finitely generated group $G$ and a subgroup $H$ in $G$, the \textit{hybrid operator norm} is defined by
\[
\|f\|_{h}:=\sup_{\|\phi\|_{(2,1)}=1} \|f*\phi\|_{(2,1)}.
\]
The closure of $\cg$ with respect to this operator norm defines the Banach algebra $\cshyb$ of Definition \ref{def:(2,1)-norm}. 
\end{definition}

\begin{rmk}
\label{rmk:injection l^1}
The Banach algebra $\cshyb$ naturally contains $\eog$. Indeed, for all
$f\in\eog$ and $h\in\etogh$, since the left translation of $G$ is an isometry on $\etogh$ we have 
\[
\|f*h\|_{(2,1)}=\|\sum_{y\in G} f(y)\delta_y*h\|_{(2,1)}\leq \sum_{y\in G} |f(y)|\ 
\|\delta_y*h\|_{(2,1)}=\|f\|_1\|h\|_{(2,1)}
\]
which implies that $\|f\|_{h}\leq\|f\|_1.$ It is a basic computation that we always have the following inequalities:
 \[
\|f \|_{2} \leq \|f \|_{(2,1)}\leq \|f \|_{1}
\]
which implies that $\eog\subseteq\etogh\subseteq\etg$, and those inclusions are strict as soon as $H$ is infinite and of infinite index in $G$. More generally, if $\Gamma<\Lambda<G$, then the hybrid norm with respect to $\Gamma$ is bounded above by the one with respect to $\Lambda$, but this is unclear in case of the hybrid operator norms in general.
\end{rmk}
\begin{definition}\label{def:SobolevNorm}
 Given a finitely generated group $G$ and a subgroup $H$ in $G$. Let $s\geq 0$, the hybrid Sobolev space of order $s$ of Definition \ref{def:(2,1)-norm} gives the space of \textit{hybrid rapidly decreasing function} as the following Fr\'echet space
\[
\textbf{H}_{\ell}^{\infty}(G,H)=\bigcap_{s\geq 0}\textbf{H}_{\ell}^s(G,H).
\] 
where $\ell$ is a length on $G$ given by any finite generating set and the norm closure of $\cg$ with respect to the norm given by $\|f\|_{s,(2,1)}=\|f(1+\ell)^s\|_{(2,1)}$. We denote by $\|f\|_{s,2}=\|f(1+\ell)^s\|_{2}$ the usual Sobolev norm studied by Jolissaint in \cite{joli}.
\end{definition}
\begin{rmk}\label{rmk:equiv(2,1)} 
Suppose that $H<G$ is any subgroup, and denote by $\pi:G\rightarrow G/H$ the quotient map, which induces a linear map $\pi_\sharp:\mathbb{C}G\rightarrow\mathbb{C}G/H$, defined by
 \[
\pi_\sharp(f)=\pi_\sharp\left(\sum_{\gamma\in G} f_\gamma \delta_\gamma\right)=\sum_{\gamma\in G} f_\gamma \delta_{\pi(\gamma)}=\sum_{q\in Q}\left(\sum_{\gamma\in\pi^{-1}(q)}f(\gamma)\right) \delta_{q},
  \]
and it is a direct computation that $\|f\|_{(2,1)}=\|\pi_\sharp(|f|)\|_2$, where the last norm is in ${\ell^2(G/H)}$.\end{rmk}
\begin{definition}\label{def:growth} Let $X$ be a locally finite graph, endowed with the graph metric $d$, that is the combinatorial length of a shortest path between two points. Given $R\geq 0$, the ball of radius $R$ with center $x\in X$ is $B(x,R)=\{y\in X|\,d(x,y)\leq R\}$ and we denote by $\gamma_{X}(x,R):=|B(x,R)|$ the associated growth function for $x\in X$. The growth is not necessarily homogeneous, and we say that the growth of $X$ is 
\begin{enumerate}
\item \textit{Polynomial} if there exists a polynomial $Q$ such that for all $x\in X$ we have 
\[\gamma_{X}(x,R)\leq Q(R).\]
\item \textit{Exponential} if there are $a\geq 1$, $b\geq 0$ and $x\in X$ such that
\[
\gamma_X(x,R)\geq e^{(aR+b)}.
\]
\item \textit{Intermediate} if it is neither polynomial nor exponential.
\item \textit{Subexponential} if there exists $x\in X$ such that  \[
\limsup_{n\to\infty} \left(\gamma_X(x,n)\right)^{\frac{1}{n}}=1.
 \] 
\end{enumerate}
\end{definition}
In case of a finitely generated group, its growth is defined as the growth of any Cayley graph obtained with a finite generating set, and doesn't depend of that choice.
\begin{rmk}\label{rmk : pol growth} 
If $G=\left<S\right>$ has polynomial volume growth, then there are constants $C,D\geq 1$ such that $|B(R)|\leq CR^D$ for any $R>0$ where $B(R)=\{x\in G \ | \ell(x)\leq R\}$ is the ball of radius $R$ in $G$. Hence, for any $f\in\cg$ supported on a ball of radius $R$
$$\|f\|_1=\sum_{x\in B(R)}|f(x)|\leq\sqrt{C(R+1)^D}\sqrt{\sum_{x\in B(R)}|f(x)|^2}=\sqrt{C}(R+1)^{D/2}\|f\|_2.$$
As a consequence, if $H<G$ is any subgroup, it will have polynomial growth as well, with respect to the induced length from $G$, and so there is a polynomial $P$ such that
$$\|f\|_{(2,1)}\leq P(R)\|f\|_{2}$$
for any $f\in \mathbb{C}G$ with support in $B(R)$.
\end{rmk}

\subsection{Equivalent definitions of the rapid decay property for a pair of groups}
The goal of this section is to prove the following relative version of Proposition 4.1 in \cite{IC}. We made the choice to consider left cosets in Definition \ref{def:(2,1)-norm}, we could have taken right ones and both theories are equivalent. We denote by $\etogh_+$ and $\mathbb{R}_+G$ the subset of positive-valued functions of  $\etogh$ and $\cg$.
\begin{prop}
\label{prop:equivalence rd_H}
Let $G$ be a finitely generated group and $H$ a subgroup of $G$. The following conditions are equivalent:
\begin{enumerate}
\item The pair $(G,H)$ has the rapid decay property (Definition \ref{def:(2,1)-norm}), namely there exists positive consant $s_0$ such that, for any $s\geq s_0$
$${\bf H}^s_{\ell}(G,H)\subseteq\cshyb.$$
\item There exists constants $C,D\geq 0$ such that for all $f\in\cg$ with support in $B(R)$ and $\phi\in\etogh$ we have (see Definition \ref{def:(2,1)-norm} for the norm)
\[
\|f*\phi\|_{(2,1)}\leq C(R+1)^D\|f\|_{(2,1)}\|\phi\|_{(2,1)}.
\]
\item There exists constants $C,D\geq 0$ such that for all $f\in\cg$ with support in $B(R)$, then (see Definition \ref{def:hybrid} for the norm)
\[ \|f\|_{h}\leq C(R+1)^D\|f\|_{(2,1)}.\]
\item There exists constants $C,D\geq 0$ such that for all $\phi,\psi\in\etogh_+$, and $f\in\mathbb{R}_+G$ with support in $B(R)$ the following inequality holds:
\[
\sum_{h\in H} (f*\phi)^{\star} *\psi^{\star}(h)\leq C(R+1)^D\ \|f\|_{(2,1)}\,\|\phi\|_{(2,1)}\,\|\psi^{\star}\|_{(2,1)}
\]
where the exponent $\star$ denote the involution $f^{\star}(x)=f(x^{-1})$.
\end{enumerate}
\end{prop}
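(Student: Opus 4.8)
The plan is to treat $(2)\Leftrightarrow(3)$ as a tautology, to settle $(2)\Leftrightarrow(4)$ by an explicit Fubini-type identity over the cosets together with a Cauchy--Schwarz estimate, and then to close the circle with $(3)\Rightarrow(1)$, proved by a spherical decomposition, and $(1)\Rightarrow(3)$, proved by the closed graph theorem. For $(2)\Leftrightarrow(3)$ there is nothing to prove: by Definition \ref{def:hybrid} one has $\|f\|_{h}=\sup_{\|\phi\|_{(2,1)}=1}\|f*\phi\|_{(2,1)}$, so after homogenising in $\phi$ the inequality in $(3)$ is literally the inequality in $(2)$.

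For $(2)\Leftrightarrow(4)$ the key point is the identity
\[
\sum_{h\in H}(f*\phi)^{\star}*\psi^{\star}(h)=\sum_{gH\in G/H}\|(f*\phi)|_{gH}\|_{1}\,\|\psi^{\star}|_{gH}\|_{1},
\]
valid for $f,\phi,\psi\in\mathbb{R}_+G$ and, by density, for $\phi,\psi\in\etogh_+$. I would obtain it by writing $(f*\phi)^{\star}*\psi^{\star}=(\psi*f*\phi)^{\star}$, reindexing $h\mapsto h^{-1}$ over $H$ to replace the left-hand side by $\sum_{h\in H}(\psi*f*\phi)(h)$, expanding the outer convolution and using positivity to recognise $\sum_{h\in H}(f*\phi)(z^{-1}h)=\|(f*\phi)|_{z^{-1}H}\|_{1}$, and finally regrouping the sum over $z$ by left cosets, using $\|\psi|_{(gH)^{-1}}\|_{1}=\|\psi^{\star}|_{gH}\|_{1}$. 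Granting the identity, $(2)\Rightarrow(4)$ follows by Cauchy--Schwarz over the index set $G/H$, which bounds the right-hand side by $\|f*\phi\|_{(2,1)}\|\psi^{\star}\|_{(2,1)}$, and then by $(2)$. For $(4)\Rightarrow(2)$ one first takes $f,\phi$ positive and finitely supported, picks one representative $g$ in each left coset, and sets $\psi^{\star}:=\sum_{gH}\|(f*\phi)|_{gH}\|_{1}\,\delta_{g}$: then the right-hand side of the identity equals $\|f*\phi\|_{(2,1)}^{2}$ while $\|\psi^{\star}\|_{(2,1)}=\|f*\phi\|_{(2,1)}$, so $(4)$ and division give $\|f*\phi\|_{(2,1)}\le C(R+1)^{D}\|f\|_{(2,1)}\|\phi\|_{(2,1)}$; one then removes positivity via $|f*\phi|\le|f|*|\phi|$ and passes to arbitrary $\phi\in\etogh$ by density.

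For $(3)\Rightarrow(1)$, fix $s>D+\tfrac12$. Since $G$ is finitely generated its spheres $\{\ell=n\}$ are finite, so every $\xi\in{\bf H}^{s}_{\ell}(G,H)$ decomposes as $\xi=\sum_{n\ge0}\xi_{n}$ with $\xi_{n}\in\cg$ supported on $\{\ell=n\}$, on which $1+\ell\equiv n+1$. For $f\in\cg$ the triangle inequality, $(3)$, and Cauchy--Schwarz in $n$ give
\[
\|f\|_{h}\le\sum_{n}\|f_{n}\|_{h}\le C\sum_{n}(n+1)^{D}\|f_{n}\|_{(2,1)}\le C\Big(\sum_{n}(n+1)^{2(D-s)}\Big)^{1/2}\Big(\sum_{n}(n+1)^{2s}\|f_{n}\|_{(2,1)}^{2}\Big)^{1/2},
\]
where the first factor is finite by the choice of $s$ and the second is at most $\|f(1+\ell)^{s}\|_{(2,1)}=\|f\|_{s,(2,1)}$ because, coset by coset, $\sum_{n}(n+1)^{2s}\|f_{n}|_{gH}\|_{1}^{2}\le\big(\sum_{n}(n+1)^{s}\|f_{n}|_{gH}\|_{1}\big)^{2}=\|(f(1+\ell)^{s})|_{gH}\|_{1}^{2}$. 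Thus $\|f\|_{h}\le C'\|f\|_{s,(2,1)}$ on $\cg$; since $\cg$ is dense in ${\bf H}^{s}_{\ell}(G,H)$ for $\|\cdot\|_{s,(2,1)}$ (Definition \ref{def:SobolevNorm}) and $\cshyb$ is complete, this extends to a bounded map ${\bf H}^{s}_{\ell}(G,H)\to\cshyb$, which is injective because composing with $T\mapsto T\delta_{e}$ recovers the inclusion ${\bf H}^{s}_{\ell}(G,H)\hookrightarrow\etogh$; this is exactly $(1)$. Conversely, assuming $(1)$ for some $s_{0}$, the inclusion ${\bf H}^{s_{0}}_{\ell}(G,H)\hookrightarrow\cshyb$ has closed graph, since convergence in either space forces pointwise convergence of the associated convolution kernels, pinning down any limit as convolution by $\xi$; the closed graph theorem then gives $\|\xi\|_{h}\le C\|\xi\|_{s_{0},(2,1)}$, and evaluating this on $f\in\cg$ supported in $B(R)$, where $\|f\|_{s_{0},(2,1)}\le(R+1)^{s_{0}}\|f\|_{(2,1)}$, yields $(3)$ with $D=s_{0}$.

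The step requiring the most care is $(3)\Rightarrow(1)$: the inequality on $\cg$ is routine, but upgrading it to an \emph{embedding} forces one to verify that the bounded extension is injective and, more importantly, that it actually takes values in the completion $\cshyb$, i.e.\ produces operators that are norm-limits of elements of $\cg$ --- this is precisely where density of $\cg$ in ${\bf H}^{s}_{\ell}(G,H)$ and the evaluation-at-$\delta_{e}$ device are used. The other delicate points are the bookkeeping with the involution and with left versus right cosets in the identity underlying $(2)\Leftrightarrow(4)$, and the need, in $(4)\Rightarrow(2)$, to construct the extremal test function $\psi$ with finite support before invoking density.
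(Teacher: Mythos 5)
Your proposal is correct and follows essentially the same route as the paper: $(2)\Leftrightarrow(3)$ from the definition of the operator norm, the spherical decomposition plus Cauchy--Schwarz for $(3)\Rightarrow(1)$ with the support bound $\|f\|_{s,(2,1)}\le (R+1)^s\|f\|_{(2,1)}$ for the converse (where you merely make explicit, via the closed graph theorem, the continuity of the embedding that the paper leaves implicit), and the coset Fubini computation plus Cauchy--Schwarz with an extremal $\psi$ for $(2)\Leftrightarrow(4)$. The only genuine deviation is your choice of test function in $(4)\Rightarrow(2)$ --- the coset-representative atom $\psi^{\star}=\sum_{gH}\|(f*\phi)|_{gH}\|_{1}\delta_{g}$ instead of the paper's $\psi=(f*\phi)^{\star}$ --- which plays the same role and, by working with finitely supported $f,\phi$ first, neatly avoids any worry about whether $(f*\phi)^{\star}$ lies in $\etogh$.
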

\begin{rmk}\label{rmk:positive}
Any function $f\in\cg$ can be written as $f=f_1-f_2+i(f_3-f_4)$ with  $f_k\in \mathbb{R}_+G$ for $k\in\{1,...,4\}$, and where for $k=1,3$, $f_k$ and $f_{k+1}$ have disjoint supports. We can hence rewrite the $(2,1)$-norm as follows:
\[
\|f\|_{(2,1)}=\sqrt{\sum_{gH\in G/H}\left(\sum_{x\in gH}\sqrt{\sum_{k=1}^4 f_k(x)^2}\right)^2}.
\]
Applying Cauchy-Schwarz inequality twice, we find constants $C, D\geq 0$ such that, for all $\phi\in \etogh_+$ 
and $f\in\mathbb{C}G$ with support in $B(R)$ the following inequality holds:
$$
\|f*\phi\|_{(2,1)}\leq \sum_{k=1}^4 \|f_k*\phi
\|_{(2,1)}\leq 4CR^D\|f\|_{(2,1)}\|\phi\|_{(2,1)}.$$
Writing $\phi=\phi_1-\phi_2+i(\phi_3-\phi_4)$ for $\phi\in \etogh$, we obtain that it is enough to prove (2) and (3) for positive-valued functions to deduce it for all the functions in $\cg$.
\end{rmk}
\begin{proof}[Proof of Proposition \ref{prop:equivalence rd_H}] The equivalence beetween (2) and (3) follows from the definition of operator norm. We will show that $(3)\Leftrightarrow (1)$, and then $(2)\Leftrightarrow (4)$.

\smallskip
\noindent
That (3) implies (1) follows from the spherical decomposition
$$f=\sum_{n\in{\bf N}}f_n$$ 
where $f_n$ is the restriction of $f$ over the sphere of radius $n$ given by $S_n:=\{x\in G\ | \ \ell(x)=n \}$. Indeed, suppose that $(3)$ holds, then 
there exists positive constants $C$ and $D$ such that (using Cauchy-Schwarz inequality):
\[
\|f\|_{h}\leq\sum_{n=0}^{\infty} \|f_n\|_{h}\leq C\sum_{n=0}^{\infty}(n+1)^D\|f_n\|_{(2,1)}\leq K\|f\|_{(2,1)}.
\]
for any $s\geq D+1$, with $K=C\frac{\pi}{\sqrt{6}}$ and where the norm is the one from Definition \ref{def:SobolevNorm}. Hence,  ${\bf H}_\ell^s(G,H)\subseteq \cshyb$ for any $s\geq D+1$.

The converse follows from $\|f\|_{s,(2,1)}\leq C(R+1)^s\|f\|_{(2,1)}$, for any $s\geq 0$ and for all $f\in \cg$ with support in $B(R)$.

\noindent For the equivalence beetween $(2)$ and $(4)$, according to Remark \ref{rmk:positive}, without loss of generality we can assume that all the functions are positive, so let $f\in\mathbb{R}_+G$ with support in $B(R)$, and $\phi,\psi\in \etogh_+$. By  decomposing the sum over $G$ in the convolution on right cosets, we have the following:
\[
\sum_{h\in H} (f*\phi)^**\psi^*(h)=
\sum_{Hg\in H\setminus G}\sum_{z\in Hg} (f*\phi)^*(z)\sum_{h\in H}\psi^*(z^{-1}h).
\]
Using the change of variable $y=z^{-1}h$, we get that for all $z\in Hg$, then $y^{-1}$ is an element of $Hg$. Moreover, using Cauchy-Schwarz inequality we obtain that
\[
\sum_{Hg\in H\setminus G}\sum_{z\in Hg} (f*\phi)^*(z)\sum_{y^{-1}\in Hg}\psi(y^{-1})\leq 
\|f*\phi\|_{(2,1)}\|\psi^*\|_{(2,1)}.
\]
Suppose $(2)$, then  there exists positive constants $C$ and $D$ such that
\[
\sum_{h\in H} (f*\phi)^**\psi^*(h)
\leq \|f*\phi\|_{(2,1)}\|\psi^*\|_{(2,1)}\leq  C(R+1)^D\ \|f\|_{(2,1)}\,\|\phi\|_{(2,1)}\,\|\psi^*\|_{(2,1)}.
\] and $(4)$ follows. Reciprocally, assume $(4)$, and choose $\psi$ as follows: 
$\psi=(f*\phi)^*$. By construction, $\|\psi^*\|_{(2,1)}=\|f*\phi\|_{(2,1)}$ and  by changing the variable $y=z^{-1}h$ we have
\[
\sum_{h\in H} (f*\phi)^**\psi^*(h)=
\sum_{h\in H} (f*\phi)^**(f*\phi)(h) 
\]
\[= \sum_{z\in G} (f*\phi)^*(z)\sum_{h\in H} f*\phi(z^{-1}h)
=
\sum_{Hg\in H\setminus G}\sum_{z\in Hg} f*\phi(z^{-1})\sum_{y\in z^{-1}H} f*\phi(y)
\]
for all $\phi\in \etogh_+$, and
 $f\in \mathbb{R}_+G$ with support in $B(R)$.
Since for $z\in Hg$, $yH$ and $z^{-1}H$ are the same left coset. Hence, we get that the last expression is equal to $\|f*\phi\|_{(2,1)}^2$.  With this choice of $\psi$, we obtain (using $(4)$) that 
\[
\|f*\phi\|_{(2,1)}^2=
\sum_{h\in H} (f*\phi)^**\psi^*(h)\leq C(R+1)^D\ \|f\|_{(2,1)}\|\phi\|_{(2,1)}\|f*\phi\|_{(2,1)}.
\] which concludes the equivalence.
\end{proof}
The following is a straightforward observation.
\begin{lem}[Subgroup stability]
\label{lem: stabilité sous-groupe} Let $(G,H)$ be a pair of groups with the rapid decay property. Then for any finitely generated group $K$ inbetween $H<K<G$, the pair $(K,H)$ has  the rapid decay property.
\end{lem}
\begin{proof}
For $f\in {\mathbb R}K$, the hybrid operator norm for the pair $(K,H)$ is bounded above by the one for the pair $(G,H)$, and we conclude because the hybrid norms coinicide.\end{proof}

In general, the relatively rapid decaying functions ${\bf H}_\ell^s(G,H)$ (see Definition \ref{def:(2,1)-norm}) do not form a Banach algebra, but as in the rapid decay case, we have the following.
\begin{prop}
\label{prop: Bs est une Banach alg} Let $G$ be a finitely generated group and $H$ a subgroup of $G$. If the pair $(G,H)$ has the rapid decay property, then there is $s_0$ large enough so that the space ${\bf H}_\ell^s(G,H)$ is a Banach algebra for every $s\geq s_0$ and every word length $\ell$ from a finite generating set of $G$.
\end{prop}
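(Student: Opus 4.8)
The plan is to show that for $s$ large enough, the Sobolev-type norm $\|\cdot\|_{s,(2,1)}$ is submultiplicative up to a constant, so that rescaling it produces a genuine Banach algebra norm on ${\bf H}_\ell^s(G,H)$. The key input is characterization (2) in Proposition \ref{prop:equivalence rd_H}: there are constants $C,D\ge 0$ such that $\|f*\phi\|_{(2,1)}\le C(R+1)^D\|f\|_{(2,1)}\|\phi\|_{(2,1)}$ whenever $f$ is supported in the ball $B(R)$. I would first reduce to the case of spherically supported functions via the decomposition $f=\sum_n f_n$, $g=\sum_m g_m$, where $f_n,g_m$ are the restrictions to the spheres $S_n,S_m$. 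Since $f_n*g_m$ is supported in $B(n+m)$, applying (2) gives $\|f_n*g_m\|_{(2,1)}\le C(n+m+1)^D\|f_n\|_{(2,1)}\|g_m\|_{(2,1)}$.

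The heart of the computation is then the standard Jolissaint-style estimate. One writes
\[
\|f*g\|_{s,(2,1)}\le\sum_{k}\Big\|\sum_{n+m\ge k}(f_n*g_m)\big|_{S_k}\Big\|_{(2,1)}(1+k)^s,
\]
and using $(1+k)^s\le(1+n+m)^s\le 2^s(1+n)^s(1+m)^s$ together with the spherical estimate above, one bounds $\|f*g\|_{s,(2,1)}$ by
\[
2^s C\sum_{n,m}(n+m+1)^D(1+n)^s(1+m)^s\|f_n\|_{(2,1)}\|g_m\|_{(2,1)}.
\]
Absorbing the polynomial factor $(n+m+1)^D\le 2^D(1+n)^D(1+m)^D$ and then inserting and removing convergence factors $(1+n)^{-1},(1+m)^{-1}$ via Cauchy–Schwarz (which costs a finite constant $\pi^2/6$ once $s$ is replaced by $s+1$), one arrives at $\|f*g\|_{s,(2,1)}\le K\|f\|_{s+D+1,(2,1)}\|g\|_{s+D+1,(2,1)}$ for a uniform constant $K$. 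Thus for $s_0=D+1$ and any $s\ge s_0$, the norm $\|\cdot\|_{s,(2,1)}$ is submultiplicative up to $K$, and $K\|\cdot\|_{s,(2,1)}$ is a Banach algebra norm on the completion ${\bf H}_\ell^s(G,H)$; completeness is immediate since it is by definition the closure of $\cg$ in a norm equivalent to this one.

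I expect the main obstacle to be purely bookkeeping: keeping track of the coset structure in the $(2,1)$-norm while manipulating the spherical decomposition, since $f*g$ restricted to a sphere $S_k$ is a sum over all pairs $(n,m)$ with $|n-m|\le k\le n+m$ and one must be careful that the triangle inequality for $\|\cdot\|_{(2,1)}$ (which holds, being a norm) is applied before invoking the bilinear estimate (2), which requires one factor to be finitely supported. A secondary point worth stating explicitly is that the estimate must be uniform over word lengths coming from finite generating sets; this follows because any two such lengths are Lipschitz-equivalent, so changing the generating set only changes $C$, $D$ and $s_0$ by controlled amounts, and one can take the maximum. No genuinely new idea beyond Proposition \ref{prop:equivalence rd_H} is needed.
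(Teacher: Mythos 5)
Your reduction to spherical pieces and the estimate $\|f_n*g_m\|_{(2,1)}\leq C(n+m+1)^D\|f_n\|_{(2,1)}\|g_m\|_{(2,1)}$ are fine, but the conclusion does not follow from the inequality you actually prove. Your final bound is $\|f*g\|_{s,(2,1)}\leq K\|f\|_{s+D+1,(2,1)}\|g\|_{s+D+1,(2,1)}$, i.e.\ convolution is continuous as a map ${\bf H}_\ell^{s+D+1}(G,H)\times {\bf H}_\ell^{s+D+1}(G,H)\to {\bf H}_\ell^{s}(G,H)$, with a loss of $D+1$ in the exponent. This does not imply that $\|\cdot\|_{t,(2,1)}$ is submultiplicative (up to a constant) for any fixed $t$, which is what is needed for ${\bf H}_\ell^t(G,H)$ to be a Banach algebra: the product is only controlled in a strictly weaker norm than the factors, and no choice of $s_0=D+1$ closes this. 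The loss is structural in your scheme, not bookkeeping: the multiplicative splitting $(1+n+m)^s\leq 2^s(1+n)^s(1+m)^s$ puts the full weight $s$ on both factors, and then the RD factor $(n+m+1)^D$ plus the Cauchy--Schwarz convergence factors force exponents $\geq s+D+1$ on at least one factor; even the additive splitting $(1+n+m)^s\leq 2^s\big((1+n)^s+(1+m)^s\big)$, used purely sphere-by-sphere, still yields bounds of the type $\|f\|_{s+D+1,(2,1)}\|g\|_{1,(2,1)}+\|f\|_{D+1,(2,1)}\|g\|_{s+1,(2,1)}$, again with growing exponents.

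The missing idea (and the paper's route, following Lafforgue as in Proposition 8.15 of \cite{valette}) is to spend the rapid decay hypothesis once and for all to pass to the hybrid operator norm, namely $\|\psi\|_h\leq C\|\psi\|_{s_0,(2,1)}$ for a \emph{fixed} $s_0$, and then to use the additive splitting $(1+\ell(y))^s\leq 2^s\big((1+\ell(z))^s+(1+\ell(z^{-1}y))^s\big)$ so that in each of the two resulting terms one factor carries the whole weight $(1+\ell)^s$ and is measured in $\|\cdot\|_{(2,1)}$, while the other factor is \emph{unweighted} and measured in $\|\cdot\|_h\leq C\|\cdot\|_{s_0,(2,1)}\leq C\|\cdot\|_{s,(2,1)}$ for every $s\geq s_0$. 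This keeps the same exponent $s$ on both sides and gives $\|f*\psi\|_{s,(2,1)}\leq K\|f\|_{s,(2,1)}\|\psi\|_{s,(2,1)}$, hence a Banach algebra after rescaling the norm. In other words, your spherical resummation should be performed once to prove the implication (3)$\Rightarrow$(1) of Proposition \ref{prop:equivalence rd_H} (which you essentially already know), and afterwards the operator norm, which is genuinely submultiplicative, must replace the sphere-by-sphere polynomial bound on the unweighted factor. Your closing remarks (that (2) only requires the left factor to be finitely supported, and the Lipschitz-equivalence of word lengths) are correct but do not affect this gap.
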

\begin{proof}
We follow Lafforgue's proof described in Proposition 8.15 in \cite{valette}, for $s\geq 1$ and $f,\psi\in \mathbb{C}G$, $y\in G$ we have that
\[
(1+\ell(y))^s|(f*\psi)(y)|\leq
2^{s}\left(|f|(1+\ell)^s*|\psi|\right)(y)+2^s(|f|*|\psi|(1+\ell)^s)(y)                                        . \]
For a coset $gH\in G/H$, we compute the $\ell^1$-norm of the restriction of $f*\psi$ to the coset $gH$, which gives
$$
    \|(1+\ell)^s(f*\psi)|_{gH}\|_{1}\leq 2^{s}\left(\|(|f|(1+\ell)^s*|\psi|)|_{gH}\|_{1}+\|(|f|*|\psi|(1+\ell)^s)|_{gH}\|_{1}\right)                                        .$$
Squaring and summing over all cosets $gH\in G/H$ we get that (see Definition \ref{def:SobolevNorm})
\begin{eqnarray*}
    \|f*\psi\|_{s,(2,1)}^2&\leq & 2^{2s+1}\left(\|\,|f|(1+\ell)^s*|\psi|\,\|_{(2,1)}^2+
\|\,|f|*|\psi|(1+\ell)^s\|_{(2,1)}^2\right)\\
&\leq &2^{2s+1}\left(\|\,|f|(1+\ell)^s\|_{(2,1)}^2\|\,|\psi|\,\|_h^2+
\|\,|f|\,\|_h^2\|\,|\psi|(1+\ell)^s\|_{(2,1)}^2\right)
\end{eqnarray*}
Since the pair $(G,H)$ has the rapid decay property, there exists a constant $C>0$ bounding the hybrid operator norm in terms of the weighted $(2,1)$ norm so that
\[
\|f*\psi\|_{s,(2,1)}^2 \leq C^2 2^{2s+2}\|f\|_{s,(2,1)}^2 \|\psi\|_{s,(2,1)}^2.
 \]
 Hence, up to rescaling the norm, ${\bf H}_{\ell}^s(G,H)$ is a Banach algebra. 
\end{proof}

\subsection{Quasi-regular representations and rapid decay property for pairs} Recall that a unitary representation of a discrete countable group $G$ is a pair $(\pi,\mathcal{H})$ consisting of a group homomorphism $\pi:G\to\mathcal{U}(\mathcal{H})$ into the group of unitary operators of a Hilbert space $\mathcal{H}$. Any such representation extends by linearity to a representation of the group ring $\cg$ into $\mathcal{B}(\mathcal{H})$, the bounded operators on $\mathcal{H}$. The \textit{reduced $C^*$-algebra induced by $\pi$}, denoted by $C^*_\pi(G)$, is the closure of $\pi(\cg)$ in the operator norm of $\mathcal{B}(\mathcal{H})$, denoted by $\|\ \|_*$.
\begin{definition}[Boyer \cite{boyer}]\label{def:RDrep}
 Given any unitary representation $(\pi,\mathcal{H})$, of a finitely generated group $G$. We say that the representation has the {\it rapid decay property} if there is $s_0$ large enough and a constant $C>0$ such that for any $f\in\cg$ and any $s\geq s_0$, then 
$$\|\pi(f)\|_{*}\leq C\|f(1+\ell)^s\|_2.$$
\end{definition}
The rapid decay property for the group $G$ is equivalent to the rapid decay property for the regular representation. 
\begin{rmk}
   Let $(\pi_1,\mathcal{H}_1)$ and %
$(\pi_2,\mathcal{H}_2)$ two unitary representations of a discrete group $G$. 
According to Theorem F.4.4 in \cite{bekka2008kazhdan}, the representation $\pi_1$ is \textit{weakly contained} in $\pi_2$, and we denote $\pi_1\prec \pi_2$, if
$$\|\pi_1(f)\|_{\mathcal{B}(\mathcal{H}_1)}\leq\|\pi_2(f)\|_{\mathcal{B}(\mathcal{H}_2)}$$
for all $f\in\eog$. Hence, the rapid decay property for a representation implies it for all the representations weakly contained in it.     
\end{rmk}
Recall that for a subgroup $H$ of $G$, the left \textit{quasi-regular representation} is the representation induced from the left action of $G$ on the cosets $G/H$.
%
 Extending this map by linearity to $\cg$, we obtain for $f\in\cg$ an operator $\lambda_{G/H}(f):\ell^2(G/H)\rightarrow \ell^2(G/H)$ where
\[
\lambda_{G/H}(f)(\tilde{\xi})(gH)=\sum_{z\in G} f(z)\tilde{\xi}(z^{-1}gH),
\]
for all $gH\in G/H$ and $\tilde{\xi}\in\ell^2(G/H)$. 
We will now see how those norms compare, with the following general result
%
\begin{lem}
\label{lem: inclusion dans la semi-regul}\label{lem:operateur quotient vs hybride} Let $G$ be a finitely generated group, and $H<G$ be a subgroup.
\begin{enumerate}
    \item For any $f\in\cg$, then $\|\lambda_{G/H}(f)\|_*\leq \|f\|_{h}$ and the algebra $B^*_r(G,H)$ embeds in $C^*_{\lambda_{G/H}}(G)$, the $C^*$-algebra induced by the quasi-regular representation.
    \item For all $f\in\mathbb{R}_+G$, then $\|f\|_*\leq\|\lambda_{G/H}(f)\|_{*}=\|f\|_{h}.$
\end{enumerate}
\end{lem}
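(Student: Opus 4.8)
The plan is to prove both parts by relating the quasi-regular representation on $\ell^2(G/H)$ to the action of $\cg$ on $\etogh$, using the linear map $\pi_\sharp:\cg\to\mathbb CG/H$ together with the identity $\|f\|_{(2,1)}=\|\pi_\sharp(|f|)\|_2$ from Remark \ref{rmk:equiv(2,1)}. The key algebraic observation is that $\pi_\sharp$ intertwines left convolution by $f\in\cg$ on $\etogh$ with the operator $\lambda_{G/H}(f)$ on $\ell^2(G/H)$, at least up to the absolute-value issue: for $f\in\mathbb R_+G$ and $\phi\in\etogh_+$ one has $\pi_\sharp(f*\phi)=\lambda_{G/H}(f)(\pi_\sharp\phi)$, because pushing forward the convolution $\sum_z f(z)\phi(z^{-1}x)$ to cosets exactly reproduces the formula $\sum_z f(z)\tilde\xi(z^{-1}gH)$.

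For part (1), I would start from an arbitrary $\tilde\xi\in\ell^2(G/H)$, write $\tilde\xi=\tilde\xi_1-\tilde\xi_2+i(\tilde\xi_3-\tilde\xi_4)$ with the $\tilde\xi_k\ge 0$ and paired supports disjoint, and lift each $\tilde\xi_k$ to $\phi_k\in\etogh_+$ supported on a transversal for $G/H$ so that $\pi_\sharp\phi_k=\tilde\xi_k$ and $\|\phi_k\|_{(2,1)}=\|\tilde\xi_k\|_2$. Then for general $f\in\cg$, decomposing $f$ into its four positive parts as well and using $|\pi_\sharp(g*\phi)|\le\pi_\sharp(|g|*|\phi|)$ pointwise, together with the intertwining identity for the positive pieces, gives $\|\lambda_{G/H}(f)\tilde\xi\|_2\le C\|f\|_h\|\tilde\xi\|_2$ for an absolute constant $C$ (coming from the $4\times 4$ triangle-inequality bookkeeping, exactly as in Remark \ref{rmk:positive}); one should check the constant is in fact $1$ by being slightly more careful, since the claim is stated without a constant. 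The embedding statement $\cshyb\hookrightarrow C^*_{\lambda_{G/H}}(G)$ is then immediate: $\|\lambda_{G/H}(f)\|_*\le\|f\|_h$ means the identity on $\cg$ extends to a norm-decreasing $*$-homomorphism, and injectivity follows because a $*$-homomorphism between Banach $*$-algebras that is isometric on a dense subalgebra — or here, the reverse inequality of part (2) restricted to the relevant generators — forces it to be injective; more simply, one observes $C^*_{\lambda_{G/H}}(G)$ sits inside $\cshyb$ via the isometry $\pi_\sharp$ onto the subspace of coset-averaged functions, so the two completions agree on that image.

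For part (2), the inequality $\|f\|_*\le\|\lambda_{G/H}(f)\|_*$ for $f\in\mathbb R_+G$ should follow from weak containment: the quasi-regular representation $\lambda_{G/H}$ contains the trivial-on-$H$ part, and for $f\ge 0$ one has $\|f\|_*=\|f\|_{*,\mathrm{triv}}$-type domination — more precisely, $\lambda_G\prec\lambda_{G/H}\otimes\lambda_G$-style arguments, or most directly the classical fact that for positive $f$ the reduced norm equals the norm in any representation whose matrix coefficients are not too small, and $\ell^2(G/H)$ qualifies because the coefficient $\langle\lambda_{G/H}(f)\delta_H,\delta_H\rangle=\sum_{h\in H}f(h)$ dominates enough. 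For the equality $\|\lambda_{G/H}(f)\|_*=\|f\|_h$ when $f\ge 0$, the inequality $\le$ is part (1); for $\ge$, I would use that for positive $f$ and $\phi\in\etogh_+$ the pushforward is faithful in the sense $\|f*\phi\|_{(2,1)}=\|\pi_\sharp(f*\phi)\|_2=\|\lambda_{G/H}(f)\pi_\sharp\phi\|_2$ with no loss (no absolute values are destroyed since everything is already positive), and every $\tilde\xi\in\ell^2(G/H)_+$ arises as $\pi_\sharp\phi$ for some such $\phi$ with equal norm, so the supremum defining $\|f\|_h$ is attained over the same set of vectors as the one defining $\|\lambda_{G/H}(f)\|_*$ — after reducing the latter to positive vectors, which is legitimate for positive $f$ since $\lambda_{G/H}(f)$ preserves positivity and the positive cone is total.

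The main obstacle I expect is the reduction to positive functions and vectors in the right places: the intertwining identity $\pi_\sharp(f*\phi)=\lambda_{G/H}(f)\pi_\sharp\phi$ is clean only when signs do not interfere, so part (1) needs the four-part decomposition on both $f$ and $\phi$ and a careful argument that the accumulated constant is actually $1$ (or else the statement should carry a constant); and in part (2) the step $\|f\|_*\le\|\lambda_{G/H}(f)\|_*$ genuinely uses positivity of $f$ — it is false for general $f$, e.g. when $H=G$ — so one must invoke the appropriate positivity/weak-containment lemma rather than a naive norm comparison.
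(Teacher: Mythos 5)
Your plan reproduces the paper's basic mechanism (lift vectors along a transversal, push forward with $\pi_\sharp$), and your treatment of the equality $\|\lambda_{G/H}(f)\|_*=\|f\|_{h}$ for $f\in\mathbb{R}_+G$ is correct and essentially the paper's argument: the paper's test vector $\tilde\phi(gH)=\|\phi|_{gH}\|_1$ is exactly $\pi_\sharp(|\phi|)$, and the reduction to positive vectors is justified by the pointwise bound $|\lambda_{G/H}(f)\xi|\le\lambda_{G/H}(f)|\xi|$ (the matrix of $\lambda_{G/H}(f)$ has nonnegative entries), rather than by ``preserves positivity and the cone is total''. But there are two genuine problems. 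First, in part (1) the four-fold decomposition is both unnecessary and damaging: the statement carries no constant, and your argument only yields $\|\lambda_{G/H}(f)\|_*\le C\|f\|_{h}$ together with a promise to remove $C$ later. The premise behind the detour is false: the identity $\pi_\sharp(f*\phi)=\lambda_{G/H}(f)\pi_\sharp(\phi)$ is exact for every $f\in\cg$ and every $\phi\in\etogh$ (Fubini on each coset, legitimate because $\phi$ is coset-wise $\ell^1$); signs only affect the comparison of norms, where one always has $\|\pi_\sharp\psi\|_2\le\|\psi\|_{(2,1)}$, and that is precisely the direction you need. So for an arbitrary complex unit vector $\eta\in\ell^2(G/H)$, lift it to $\tilde\eta$ supported on a transversal, note $\|\tilde\eta\|_{(2,1)}=\|\eta\|_2=1$, and compute $\|\lambda_{G/H}(f)\eta\|_2=\|\pi_\sharp(f*\tilde\eta)\|_2\le\|f*\tilde\eta\|_{(2,1)}\le\|f\|_{h}$, with constant $1$; this is the paper's proof. (Your injectivity justifications for the embedding also do not hold as stated: the induced map is contractive, not isometric, on $\cg$, and $\pi_\sharp$ is neither an algebra morphism nor an isometry on all of $\etogh$; the paper itself records only the norm inequality here.)

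The second gap is the lower bound $\|f\|_*\le\|\lambda_{G/H}(f)\|_*$ for $f\in\mathbb{R}_+G$: none of the three routes you gesture at is a proof as written. There is no citable ``classical fact'' of the form you state; Fell absorption does give $\lambda_{G/H}\otimes\lambda_G\cong\infty\cdot\lambda_G$, hence $\|(\lambda_{G/H}\otimes\lambda_G)(f)\|=\|f\|_*$, but you would still have to explain why this tensor norm is dominated by $\|\lambda_{G/H}(f)\|_*$ for positive $f$, which you do not; and a single large coefficient $\langle\lambda_{G/H}(f)\delta_H,\delta_H\rangle=\sum_{h\in H}f(h)$ does not by itself bound the operator norm from below by $\|f\|_*$. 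The coefficient idea can be salvaged, but only through powers: for $f\ge0$ the functions $(f^{*}*f)^{(n)}$ are pointwise nonnegative, so $\langle\lambda_{G/H}(f^{*}*f)^{n}\delta_H,\delta_H\rangle=\sum_{h\in H}(f^{*}*f)^{(n)}(h)\ge(f^{*}*f)^{(n)}(e)$, and taking $n$-th roots and using faithfulness of the canonical trace on $\cstr$ gives $\|f\|_*^2\le\|\lambda_{G/H}(f^{*}*f)\|_*=\|\lambda_{G/H}(f)\|_*^2$. The paper avoids all of this with a two-line elementary argument worth comparing with: for a unit vector $\xi\in\etg$ set $\tilde\xi(gH)=\|\xi|_{gH}\|_2$, a unit vector of $\ell^2(G/H)$; positivity of $f$ and the triangle inequality on each coset give $\|f*\xi\|_2\le\|\lambda_{G/H}(f)\tilde\xi\|_2\le\|\lambda_{G/H}(f)\|_*$, and one concludes by taking the supremum over $\xi$.
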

\begin{proof}
(1) 
We choose a system of representatives $R\subseteq G$ for $G/H$, and for any $\eta\in\ell^2(G/H)$ with norm one, we define $\tilde{\eta}_R$ to be the function on $G$ whose support intersects each coset $gH$ in the point $g\in R$. For any such $g\in R$ we have that $\tilde{\eta}_R(g)=\eta(gH)$, so that $\|\tilde{\eta}_R\|_{(2,1)}=\|\eta\|_{2}=1$. Hence
    \begin{eqnarray*}
\|\lambda_{G/H}(f)\eta\|_2^2&=& 
\sum_{gH\in G/H}\left|\sum_{z\in G}f(z)\eta(z^{-1}gH)\right|^2\\
&=&\sum_{gH\in G/H} \left|\sum_{z\in G} f(z)\sum_{x\in gH}\tilde{\eta}_R(z^{-1}x)\right|^2\\
&\leq& \sum_{gH\in G/H}\left(\sum_{x\in gH} \left|\sum_{z\in G} f(z)\tilde{\eta}_R(z^{-1}x)\right |\right)^2\\
&=&\|f*\tilde{\eta}_R\|_{(2,1)}^2\leq\|f\|_h^2.
\end{eqnarray*}
We conclude by taking the suppremum over all $\eta\in\ell^2(G/H)$ of norm one.

(2)
Let $\xi\in\etg$ of norm 1. Define the function $\tilde{\xi}\in\ell^2(G/H)$ by assigning to each coset $gH\in G/H$, the $\ell^2$ norm of $\xi$ restricted to that coset, namely $\tilde{\xi}(gH)=\|\xi|_{gH}\|_{2}$, so that $\tilde{\xi}$ has norm 1 in $\ell^2(G/H)$. For $f\in\mathbb{R}_+G$ we have that
\begin{eqnarray*}\|f*\xi\|^2_{2}&=&\sum_{gH\in G/H}\|(f*\xi)|_{gH}\|_{2}^2\leq\sum_{gH\in G/H}\left(\sum_{z\in G} f(z)\underbrace{\|(\delta_z*\xi)|_{gH}\|_2}_{\tilde{\xi}(z^{-1}gH)}\right)^2\\ &=&\|\lambda_{G/H}(f)(\tilde{\xi})\|^2_{2}\leq\|\lambda_{G/H}(f)\|^2_{*}
\end{eqnarray*}
and we conclude that the inequality in (2) holds by taking the supremum over $\xi\in\etg$ of norm 1.
Let us now turn to the equality, according to part (1) it remains to show that $\|f\|_{h}\leq\|\lambda_{G/H}(f)\|_{*}$ for all $f\in \mathbb{R}_+G$.
For any $\phi\in\etogh$ of norm one, we define $\tilde{\phi}\in\ell^2(G/H)$ by $\tilde{\phi}(gH)=\|\phi|_{gH}\|_{(2,1)}(=\|\phi|_{gH}\|_1)$ for any $gH\in G/H$, so that $\|\tilde{\phi}\|_{2}=\|\phi\|_{(2,1)}=1$ and 
 \begin{eqnarray*}\|f*\phi\|_{(2,1)}
 &\leq&\|\lambda_{G/H}(f)(\tilde{\phi})\|_{2}\leq \|\lambda_{G/H}(f)\|_{*}\end{eqnarray*}
 so that we conclude by taking the supremum over $\phi\in\etogh$ of norm one.
\end{proof}

\begin{lem}
\label{lem: Bs s'injecte dans C*}
Let $G$ be a finitely generated group. If there is a subgroup $H$ such that the pair $(G,H)$ has the rapid decay property, then there exists $s_0\geq 0$ such that ${\bf H}^s_{\ell}(G,H)$ embeds in $C^*_r(G)$ for every $s\geq s_0$.
\end{lem}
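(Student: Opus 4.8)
The plan is to combine the Banach-algebra structure of $\textbf{H}_\ell^s(G,H)$ from Proposition \ref{prop: Bs est une Banach alg} with the two norm comparisons from Lemma \ref{lem:operateur quotient vs hybride} and the definition of the relative rapid decay property. First I would invoke Proposition \ref{prop: Bs est une Banach alg} to fix $s_0$ large enough that $\textbf{H}_\ell^s(G,H)$ is a Banach algebra for all $s\geq s_0$; enlarging $s_0$ if necessary, the rapid decay hypothesis (condition (1) of Definition \ref{def:(2,1)-norm}, equivalently (3) of Proposition \ref{prop:equivalence rd_H}) gives a bounded inclusion $\textbf{H}_\ell^s(G,H)\hookrightarrow B^*_r(G,H)$ for $s\geq s_0$, since the argument in Proposition \ref{prop:equivalence rd_H} shows $\|f\|_h\leq K\|f\|_{s,(2,1)}$ for $s\geq D+1$.

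Next I would produce the embedding into $C^*_r(G)$. The naive hope is that $\|\cdot\|_*\leq\|\cdot\|_h$ on all of $\cg$, but the excerpt only asserts this for positive functions (Lemma \ref{lem:operateur quotient vs hybride}(2)), so I would route the estimate through that positivity. For arbitrary $f\in\cg$, decompose $f=f_1-f_2+i(f_3-f_4)$ with $f_k\in\mathbb R_+G$ as in Remark \ref{rmk:positive}; then $\|f\|_*\leq\sum_k\|f_k\|_*\leq\sum_k\|f_k\|_h$. Each $\|f_k\|_h$ should be controlled by $\|f_k\|_{s,(2,1)}$ via condition (3) of Proposition \ref{prop:equivalence rd_H}, and since $|f_k|\leq|f|$ pointwise and the weights $(1+\ell)^s$ are positive, one has $\|f_k\|_{s,(2,1)}\leq 4\|f\|_{s,(2,1)}$ (indeed $\|f_k\|_{(2,1)}\le\|f\|_{(2,1)}$ coset by coset). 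Combining these yields a constant $C'$ with $\|f\|_*\leq C'\|f\|_{s,(2,1)}$ for all $f\in\cg$ and all $s\geq s_0$.

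From this uniform estimate, the embedding follows by a standard density/completion argument: the identity map $(\cg,\|\cdot\|_{s,(2,1)})\to(\cg,\|\cdot\|_*)$ is bounded, hence extends to a continuous algebra homomorphism $\textbf{H}_\ell^s(G,H)\to C^*_r(G)$. To see it is injective, observe that $\textbf{H}_\ell^s(G,H)\subseteq\etg$ with $\|\cdot\|_2\leq\|\cdot\|_{s,(2,1)}$, so an element of $\textbf{H}_\ell^s(G,H)$ is in particular a well-defined $\ell^2$ function; if its image in $C^*_r(G)$ vanishes then, applying the corresponding operator to $\delta_e$, the function itself is zero in $\etg$, hence zero in $\textbf{H}_\ell^s(G,H)$. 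This gives the claimed embedding.

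The main obstacle I anticipate is the injectivity/well-definedness bookkeeping at the completion step, together with making sure the positivity detour in the norm comparison is airtight — one must check that passing from $f$ to the $f_k$ genuinely only costs a universal constant in the weighted $(2,1)$-norm, which is where Remark \ref{rmk:positive}'s coset-wise rewriting of $\|\cdot\|_{(2,1)}$ is used. Everything else is assembling inequalities already established in the excerpt.
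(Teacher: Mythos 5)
Your proposal is correct and follows essentially the same route as the paper: the paper's proof likewise combines $\|f\|_*\leq\|f\|_h$ for positive functions from Lemma \ref{lem:operateur quotient vs hybride} with the decomposition of Remark \ref{rmk:positive} and the rapid decay estimate to obtain $\|f\|_*\leq 4K\|f\|_{s,(2,1)}$ for all $f\in\cg$. Your additional remarks on the completion/injectivity step and the Banach algebra structure are fine but not needed beyond this norm inequality.
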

\begin{proof}
According to Lemma \ref{lem:operateur quotient vs hybride},
for all $f\in\mathbb{R}_+G$, we have $\|f\|_*\leq \|f\|_{h}$. Since the pais $(G, H)$ has the rapid decay property, by using the decomposition of Remark \ref{rmk:positive}, we get constants $K,s>0$ such that for all $f\in\mathbb{C}G$, $\|f\|_*\leq 4K\|f\|_{s,(2,1)}$ which proves the statement.
\end{proof}
Lemma \ref{lem:operateur quotient vs hybride} has a few consequences for the rapid decay property for a pair of groups.
\begin{cor}
\label{cor:H moyenable}Let $G$ be a group and $H$ a subgroup.
If $\lambda_{G/H}\prec \lambda_G$, then $\|f\|_*=\|f\|_{h}$ for all $f\in\mathbb{R}_+G$. If moreover $G$ has the rapid decay property, then so does the pair $(G,H)$ and in particular the pair $(G,H)$ has rapid decay for any amenable subgroup $H$.
\end{cor}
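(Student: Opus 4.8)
The plan is to prove Corollary \ref{cor:H moyenable} in two pieces, exactly matching its two sentences. For the first claim, suppose $\lambda_{G/H}\prec\lambda_G$. By Lemma \ref{lem:operateur quotient vs hybride}(2), for every $f\in\mathbb{R}_+G$ we already have $\|f\|_*\leq\|\lambda_{G/H}(f)\|_*=\|f\|_h$. So it remains to prove the reverse inequality $\|f\|_h\leq\|f\|_*$. Weak containment $\lambda_{G/H}\prec\lambda_G$ means $\|\lambda_{G/H}(f)\|_*\leq\|\lambda_G(f)\|_*=\|f\|_*$ for all $f\in\eog$, in particular for $f\in\mathbb{R}_+G$; combined with the equality $\|\lambda_{G/H}(f)\|_*=\|f\|_h$ from Lemma \ref{lem:operateur quotient vs hybride}(2) this gives $\|f\|_h\leq\|f\|_*$, hence equality.

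For the second claim, assume in addition that $G$ has the rapid decay property. Then there are constants $C,D\geq 0$ with $\|f\|_*\leq C(R+1)^D\|f\|_2$ for all $f\in\cg$ supported in $B(R)$ (Jolissaint's characterization, the analogue of Proposition \ref{prop:equivalence rd_H}(3) for the regular representation). I would verify condition (3) of Proposition \ref{prop:equivalence rd_H} for the pair $(G,H)$: take $f\in\cg$ supported in $B(R)$. Using Remark \ref{rmk:positive}, write $f=f_1-f_2+i(f_3-f_4)$ with $f_k\in\mathbb{R}_+G$ supported in $B(R)$, so $\|f\|_h\leq\sum_{k=1}^4\|f_k\|_h=\sum_{k=1}^4\|f_k\|_*$ by the first part. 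Now apply rapid decay of $G$ to each $f_k$: $\|f_k\|_*\leq C(R+1)^D\|f_k\|_2\leq C(R+1)^D\|f_k\|_{(2,1)}\leq C(R+1)^D\|f\|_{(2,1)}$, where the middle inequality is the standard $\|\cdot\|_2\leq\|\cdot\|_{(2,1)}$ from Remark \ref{rmk:injection l^1} and the last holds because $|f_k|\leq|f|$ pointwise. Summing the four terms gives $\|f\|_h\leq 4C(R+1)^D\|f\|_{(2,1)}$, which is condition (3), so the pair $(G,H)$ has the rapid decay property.

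Finally, the parenthetical statement about amenable $H$: if $H$ is amenable, then the quasi-regular representation $\lambda_{G/H}$ is weakly contained in $\lambda_G$ — this is a standard fact (the trivial representation of the amenable group $H$ is weakly contained in its regular representation, and induction preserves weak containment, so $\lambda_{G/H}=\mathrm{Ind}_H^G\mathbf{1}_H\prec\mathrm{Ind}_H^G\lambda_H=\lambda_G$). Hence the hypothesis $\lambda_{G/H}\prec\lambda_G$ is satisfied and the previous conclusion applies.

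The only real subtlety — the step I would be most careful about — is making sure the passage from $f$ to its four positive parts in the second claim is legitimate: the first claim gives $\|f\|_*=\|f\|_h$ only for \emph{positive} $f$, so one genuinely needs the triangle inequality decomposition of Remark \ref{rmk:positive} together with the pointwise domination $|f_k|\le|f|$ to push the estimate through; applying the rapid-decay inequality for $G$ directly to the complex-valued $f$ and then comparing $\|f\|_h$ with $\|f\|_*$ would not work since that equality is only available in the positive cone. Everything else is a routine chain of the inequalities already recorded in Remark \ref{rmk:injection l^1} and Lemma \ref{lem:operateur quotient vs hybride}.
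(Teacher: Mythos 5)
Your proof is correct and follows essentially the same route as the paper: Lemma \ref{lem:operateur quotient vs hybride} together with the weak containment gives $\|f\|_h=\|f\|_*$ on $\mathbb{R}_+G$, rapid decay of $G$ combined with $\|\cdot\|_2\le\|\cdot\|_{(2,1)}$ yields the pair property, and the amenable case follows from weak containment of the trivial representation of $H$ in its regular one plus induction. Your explicit appeal to Remark \ref{rmk:positive} to pass from positive to general $f$ addresses a point the paper glosses over (it writes the chain $\|f\|_{h}=\|f\|_*\leq C\|f(1+\ell)^s\|_{(2,1)}$ for all $f\in\cg$, although the equality is only available on the positive cone), so the extra care you flag is warranted rather than a deviation.
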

\begin{proof}
 The weak containment $\lambda_{G/H}\prec \lambda_G$, combined with Lemma \ref{lem:operateur quotient vs hybride} (1), implies that $\|f\|_*=\|f\|_{h}$. Hence, if $G$ has rapid decay property, then so does the pair $(G,H)$ since there is $s\geq 0$ such that for any $f\in\cg$
\[
\|f\|_{h}=||f||_*\leq C\|f(1+\ell)^s\|_{2}\leq C\|f(1+\ell)^s\|_{(2,1)}.
\]
In particular, when $H$ is an amenable subgroup of a group $G$ with the rapid decay property, then the pair $(G,H)$ has rapid decay property as well since the trivial representation of $H$ is weakly contained in the regular one, which implies by induction that $\lambda_{G/H}\prec \lambda_G$ (see the proof of Corollary G.3.8 and Theorem F.3.5 in \cite{bekka2008kazhdan}).
\end{proof}
The following proposition is a generalization to non normal subgroup of the fact that the rapid decay property is stable by extension by a subgroup of polynomial growth for the induced length (see \cite{joli}).
\begin{prop}
\label{prop:H poly}
Let $G$ be a finitely generated group, the following are equivalent.
\begin{enumerate}
    \item The group $G$ has the rapid decay property.
    \item For any subgroup $H<G$ of polynomial growth for the length function induced by $G$, the pair $(G,H)$ has the rapid decay property.
     \item There exists a subgroup $H<G$ of polynomial growth for the length function induced by $G$ such that the pair $(G,H)$ has the rapid decay property.
\end{enumerate}
\end{prop}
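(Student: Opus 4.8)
The plan is to establish the cycle of implications $(1)\Rightarrow(2)\Rightarrow(3)\Rightarrow(1)$. The implication $(2)\Rightarrow(3)$ is immediate, since the trivial subgroup $H=\{e\}$ has polynomial growth (indeed it is finite), so (2) applied to it yields a witnessing subgroup for (3); alternatively one can take any infinite cyclic or virtually nilpotent subgroup, but the trivial group is the cleanest choice. For $(1)\Rightarrow(2)$, I would invoke Remark \ref{rmk : pol growth}: if $H<G$ has polynomial growth for the induced length, then there is a polynomial $P$ with $\|f\|_{(2,1)}\leq P(R)\|f\|_2$ for every $f\in\cg$ supported in $B(R)$. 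Combined with the reverse inequality $\|f\|_2\leq\|f\|_{(2,1)}$ from Remark \ref{rmk:injection l^1}, the weighted hybrid norm $\|f(1+\ell)^s\|_{(2,1)}$ and the weighted $\ell^2$-norm $\|f(1+\ell)^s\|_2$ are polynomially comparable on functions supported in $B(R)$, and in fact after absorbing the polynomial $P(R)$ into a shift of the Sobolev exponent, $\textbf{H}_\ell^{s}(G,H)$ sits between $\textbf{H}_\ell^{s'}(G)$ and $\textbf{H}_\ell^{s}(G)$ for suitable $s'>s$. I then check property (2) of Proposition \ref{prop:equivalence rd_H}: for $f\in\cg$ supported in $B(R)$ and $\phi\in\etogh$, decompose $\phi$ spherically or just estimate directly, using first $\|f*\phi\|_{(2,1)}\leq\|f*\phi\|_1\leq\|f\|_1\|\phi\|_1$ when $\phi$ is also finitely supported — no, that uses $\ell^1$ on $\phi$ which we don't control; instead the right move is to bound $\|f*\phi\|_{(2,1)}\leq\|f\|_h\|\phi\|_{(2,1)}$ and bound $\|f\|_h$ by the rapid decay of $G$: by Lemma \ref{lem:operateur quotient vs hybride}(2) applied to $|f|$ we have $\|\,|f|\,\|_h=\|\lambda_{G/H}(|f|)\|_*\leq\|\,|f|\,\|_*\cdot(\text{no})$ — rather $\|\lambda_{G/H}(|f|)\|_*\leq\|\,|f|\,\|_*$ since the quasi-regular representation is weakly contained in... this is not automatic. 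Let me restart the heart of the argument.

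The correct route for $(1)\Rightarrow(2)$: since $H$ has polynomial growth for the induced length, it is amenable (polynomial growth implies amenability), hence by Corollary \ref{cor:H moyenable} the pair $(G,H)$ has the rapid decay property whenever $G$ does. That is the whole implication, and it is essentially a citation of the preceding corollary. The only thing to note is that polynomial growth of $H$ with respect to the induced length implies polynomial growth of $H$ as an abstract group (since the induced length dominates any word length on $H$ when $H$ is finitely generated, and if $H$ is not finitely generated one works with finitely generated subgroups), hence $H$ is amenable, so $\lambda_{G/H}\prec\lambda_G$ by the induction argument cited in the proof of Corollary \ref{cor:H moyenable}.

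For the substantial implication $(3)\Rightarrow(1)$, suppose $H<G$ has polynomial growth for the induced length and the pair $(G,H)$ has rapid decay. I want to deduce that $G$ itself has rapid decay, i.e. that $\textbf{H}_\ell^s(G)\subseteq\cstr$ for $s$ large. By Lemma \ref{lem: Bs s'injecte dans C*} the pair having rapid decay already gives ${\bf H}^s_\ell(G,H)\subseteq C^*_r(G)$ for $s$ large. Now I use Remark \ref{rmk : pol growth} in the other direction: for $f\in\cg$ supported in $B(R)$, we have $\|f\|_{s,(2,1)}=\|f(1+\ell)^s\|_{(2,1)}\leq (R+1)^s\|f\|_{(2,1)}\leq (R+1)^s P(R)\|f\|_2$, and more relevantly, doing this sphere by sphere: writing $f=\sum_n f_n$ with $f_n$ supported on the sphere $S_n$, we get $\|f_n\|_{(2,1)}\leq P(n)\|f_n\|_2$, so $\|f_n\|_{s,(2,1)}\leq (n+1)^s P(n)\|f_n\|_2\leq (n+1)^{s+\deg P+C'}\|f_n\|_2 = \|f_n\|_{s+t,2}$ for a suitable integer $t$ depending only on $P$. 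Summing with a Cauchy–Schwarz trick over $n$ (as in the proof that (3) implies (1) in Proposition \ref{prop:equivalence rd_H}), one obtains $\|f\|_{s,(2,1)}\leq K\|f\|_{s+t+1,2}$ for all $f\in\cg$. Chaining this with the rapid decay of the pair, $\|f\|_*\leq 4K'\|f\|_{s,(2,1)}\leq 4K'K\|f\|_{s+t+1,2}$, which is exactly the rapid decay inequality for $G$ with exponent $s+t+1$. The main obstacle is bookkeeping the exponents correctly in this last chain — making sure the polynomial $P$ from Remark \ref{rmk : pol growth} is absorbed uniformly over all spheres and that the summation over $n$ converges after choosing the Sobolev exponent large enough — but conceptually this is routine once the three cited ingredients (Remark \ref{rmk : pol growth}, Lemma \ref{lem: Bs s'injecte dans C*}, and the summation argument from Proposition \ref{prop:equivalence rd_H}) are in hand.
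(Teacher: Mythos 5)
Your proposal is correct and follows essentially the paper's own proof: $(1)\Rightarrow(2)$ is exactly the paper's argument (polynomial growth of $H$ for the induced length gives amenability, hence $\lambda_{G/H}\prec\lambda_G$ and Corollary \ref{cor:H moyenable} applies), and your $(3)\Rightarrow(1)$ uses the same chain of inequalities $\|f\|_*\leq\|f\|_h\leq(\mathrm{poly})\,\|f\|_{(2,1)}\leq(\mathrm{poly})\,\|f\|_2$ built from Lemma \ref{lem:operateur quotient vs hybride}, the pair's rapid decay, and Remark \ref{rmk : pol growth}. Routing it through Lemma \ref{lem: Bs s'injecte dans C*} and a sphere decomposition merely restates that chain in Sobolev-norm form, so the two arguments are the same in substance.
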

\begin{proof}
$(1)\Rightarrow(2)$ If $H$ has polynomial growth, then it is amenable and $\lambda_{G/H}\prec \lambda_G$ so by Corollary \ref{cor:H moyenable} the rapid decay property implies it for the pair $(G,H)$. 
$(2)\Rightarrow(3)$ is clear so it remains to prove $(3)\Rightarrow(1)$. Since $H$ is of polynomial growth, according to Remark \ref{rmk : pol growth}, for any $f\in \mathbb{R}_+G$ with support in $B(R)$
\[
\|f\|_*\leq \|f\|_{h}\leq Q(R)\|f\|_{(2,1)}\leq 
Q(R)P(R)\|f\|_{2}
\]
where $Q$ is the polynomial from Proposition \ref{prop:equivalence rd_H} and $P$ the square root of the one from the growth of $H$.
\end{proof}
Recall that if $H$ is a subgroup of a finitely generated group $G$, the \textit{coefficients of the left quasi-regular representation} are given by 
 $$\{\langle\lambda_{G/H}(\gamma)\xi, \eta\rangle\}_{\gamma\in G}$$
 where $\xi,\eta\in \ell^2(G/H)$ are of norm one and $\langle\ ,\ \rangle$ is the scalar product on $\ell^2(G/H)$.
\begin{lem}
\label{lem:rdécroissance coefficients}
Let $G$ be a discrete group and $H$ a subgroup of $G$. The following equivalent properties imply both the rapid decay property for $G$ and for the pair $(G,H)$.
\begin{enumerate}
\item There exists  constants $M,s>0$ such that for all $\xi,\eta\in\ell^2(G/H)$ of norm one, we have
\[
\sum_{\gamma\in G} \frac{|\langle \lambda_{G/H}(\gamma)\xi,\eta\rangle|^2}{(1+\ell(\gamma))^{2s}}
\leq M.
\]
\item
There exists positive constants $K$ and $D$  such that for all
 $\xi,\eta\in\ell^2(G/H)$ of norm one, we have 
\[
\sum_{\gamma\in B(R)} |\langle \lambda_{G/H}(\gamma)\xi,\eta\rangle|^2\leq K(R+1)^D.
\]
\end{enumerate}
\end{lem}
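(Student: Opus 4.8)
The plan is to dispatch the equivalence $(1)\Leftrightarrow(2)$ first, and then to derive both rapid decay statements from $(2)$ by reducing everything to a single estimate on the operator norm of $\lambda_{G/H}(f)$ expressed through its matrix coefficients.

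For $(2)\Rightarrow(1)$ I would decompose $G$ into spheres $S_n=\{\gamma\mid\ell(\gamma)=n\}$ and write, for $\xi,\eta\in\ell^2(G/H)$ of norm one,
\[
\sum_{\gamma\in G}\frac{|\langle\lambda_{G/H}(\gamma)\xi,\eta\rangle|^2}{(1+\ell(\gamma))^{2s}}=\sum_{n\ge0}\frac{1}{(1+n)^{2s}}\sum_{\gamma\in S_n}|\langle\lambda_{G/H}(\gamma)\xi,\eta\rangle|^2\le K\sum_{n\ge0}\frac{(n+1)^{D}}{(1+n)^{2s}},
\]
using $\sum_{\gamma\in S_n}|\cdots|^2\le\sum_{\gamma\in B(n)}|\cdots|^2\le K(n+1)^D$; the last series converges as soon as $2s>D+1$, which fixes $s$ and $M$. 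Conversely, for $(1)\Rightarrow(2)$ one only needs $(1+\ell(\gamma))^{2s}\le(1+R)^{2s}$ for $\gamma\in B(R)$, whence $\sum_{\gamma\in B(R)}|\cdots|^2\le(1+R)^{2s}\sum_{\gamma\in B(R)}\frac{|\cdots|^2}{(1+\ell(\gamma))^{2s}}\le M(1+R)^{2s}$, giving $D=2s$ and $K=M$.

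The core step is the following estimate: for $f\in\cg$ supported in $B(R)$ and $\xi,\eta\in\ell^2(G/H)$ of norm one, Cauchy--Schwarz over $\gamma\in B(R)$ gives
\[
|\langle\lambda_{G/H}(f)\xi,\eta\rangle|=\Big|\sum_{\gamma\in B(R)}f(\gamma)\langle\lambda_{G/H}(\gamma)\xi,\eta\rangle\Big|\le\|f\|_2\Big(\sum_{\gamma\in B(R)}|\langle\lambda_{G/H}(\gamma)\xi,\eta\rangle|^2\Big)^{1/2}\le\sqrt{K}\,(R+1)^{D/2}\|f\|_2,
\]
so taking the supremum over $\xi,\eta$ yields $\|\lambda_{G/H}(f)\|_*\le\sqrt{K}\,(R+1)^{D/2}\|f\|_2$. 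Now I would invoke Lemma \ref{lem:operateur quotient vs hybride}. For the rapid decay property of $G$: part (2) of that lemma gives $\|f\|_*\le\|\lambda_{G/H}(f)\|_*$ for $f\in\mathbb{R}_+G$, hence $\|f\|_*\le\sqrt{K}(R+1)^{D/2}\|f\|_2$ for positive $f$ supported in $B(R)$, and Remark \ref{rmk:positive} extends this, up to a multiplicative constant, to all $f\in\cg$, which is precisely the rapid decay property for $G$ (the $H=\{e\}$ case of Proposition \ref{prop:equivalence rd_H}, i.e. Jolissaint's characterization \cite{joli}). For the pair $(G,H)$: the same lemma gives $\|f\|_h=\|\lambda_{G/H}(f)\|_*$ for positive $f$, and since $\|f\|_2\le\|f\|_{(2,1)}$ we get $\|f\|_h\le\sqrt{K}(R+1)^{D/2}\|f\|_{(2,1)}$; again Remark \ref{rmk:positive} promotes this to all of $\cg$, which is exactly condition $(3)$ of Proposition \ref{prop:equivalence rd_H}, so the pair $(G,H)$ has the rapid decay property.

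I do not expect a genuine obstacle here: the argument is bookkeeping once one observes that $\|\lambda_{G/H}(f)\|_*$ is controlled by the $\ell^2$-mass of the coefficient family $\{\langle\lambda_{G/H}(\gamma)\xi,\eta\rangle\}_\gamma$ restricted to the ball $B(R)$, and that Lemma \ref{lem:operateur quotient vs hybride} is exactly the bridge turning a bound on $\|\lambda_{G/H}(f)\|_*$ into bounds on both $\|f\|_*$ and $\|f\|_h$. The only points needing mild care are the convergence exponent in $(2)\Rightarrow(1)$ (one needs $2s>D+1$) and the observation that the single coefficient estimate above simultaneously yields rapid decay for $G$ (tested against $\|f\|_2$) and for the pair (tested against $\|f\|_{(2,1)}$), thanks to $\|f\|_2\le\|f\|_{(2,1)}$, so no separate work is required for the two conclusions.
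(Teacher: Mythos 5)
Your proposal is correct and follows essentially the same route as the paper: the same sphere/ball comparison for the equivalence of (1) and (2), the same Cauchy--Schwarz estimate on $\langle\lambda_{G/H}(f)\xi,\eta\rangle$ for $f$ supported in $B(R)$, and the same use of Lemma \ref{lem:operateur quotient vs hybride}(2) to convert the bound on $\|\lambda_{G/H}(f)\|_*$ into bounds on both $\|f\|_*$ and $\|f\|_h$ (the paper merely fixes the particular unit vector $\eta=\lambda_{G/H}(f)\xi/\|\lambda_{G/H}(f)\xi\|_2$ instead of taking a supremum over coefficients, which is equivalent).
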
 
\begin{proof}
Notice first that $(1)$ and $(2)$ are equivalent. Indeed, suppose $(1)$, then for all  $R>0$
we have
\[
\frac{1}{(1+R)^s}
\sum_{\gamma\in B(R)} |\langle \lambda_{G/H}(\gamma)\xi,\eta\rangle|^2\leq {\sum_{\gamma\in B(R)} \frac{|\langle \lambda_{G/H}(\gamma)\xi,\eta\rangle|^2}{(1+\ell(\gamma))^{2s}}}
\leq M,
\]
which implies (2). Conversely, noticing that
\[
\sum_{\gamma\in G} \frac{|\langle \lambda_{G/H}(\gamma)\xi,\eta\rangle|^2}{(1+\ell(\gamma))^{2s}}=
\sum_{R\geq 0} \frac{1}{(1+R)^{2s}}\sum_{\gamma \in S(R)} |\langle \lambda_{G/H}(\gamma)\xi,\eta\rangle|^2,
\]
we deduce $(1)$ from (2) by taking $s>D+\frac{1}{2}$.

We now show that $(2)$ implies the rapid decay property both for $G$ and for the pair $(G,H)$. Let $f\in\mathbb{R}_+G$ with support in $B(R)$, $\xi\in\ell^2(G/H)$ of norm one and $\eta= \frac{\lambda_{G/H}(f)\xi}
 {\|\lambda_{G/H}(f)\xi\|_2}$, then
\[
\|\lambda_{G/H}(f)\xi\|_{2}=
\langle \lambda_{G/H}(f)\xi, \eta\rangle
=\sum_{\gamma\in B(R)} f(\gamma)\langle
 \lambda_{G/H}(\gamma)\xi,\eta\rangle\]
 \[
 \leq \|f\|_{2}\sqrt{\sum_{\gamma\in B(R)} |\langle \lambda_{G/H}(\gamma)\xi,\eta\rangle|^2}
 \leq K(R+1)^D\|f\|_{2},
\]
where the first inequality follows from Cauchy-Schwarz and the second one is from the assumption (2) with corresponding constants $K,D>0$. Now, according to Lemma \ref{lem:operateur quotient vs hybride} (2)
 \begin{eqnarray*}
\|f\|_*\leq\|f\|_{h}=\|\lambda_{G/H}(f)\|_{*}\leq K(R+1)^D\|f\|_{2}\leq K(R+1)^D\|f\|_{(2,1)},
\end{eqnarray*} 
which proves the rapid decay property both for $G$ and for the pair $(G,H)$. 
\end{proof}  
\begin{rmk}
When $H$ is the trivial subgroup, all properties of Lemma \ref{lem:rdécroissance coefficients} and Proposition \ref{prop:equivalence rd_H}
are equivalent and we recover the equivalent characterizations of the rapid decay property from Proposition 4.1 in \cite{IC}.
\end{rmk}
\section{Pairs of groups with and without relative rapid decay.}\label{relative}
In this section we investigate what pairs of groups $(G,H)$ have the relative rapid decay property.
\subsection{Pairs of groups with a normal subgroup}
The goal of this part is to show that in case of a normal subgroup, the rapid decay property for a pair is equivalent to the rapid decay property for the quotient, showing that the rapid decay property for a pair can be thought of as the rapid decay property for a homogeneous space that is not necessarily a group. Theorem \ref{thm:cas normal} below implies that if the group $G$ sujects onto a group $Q$ with the rapid decay property via a homomorphism $\pi$, then the pair $(G,\ker(\pi))$ has the rapid decay property.
\begin{prop}\label{thm:cas normal} Let $G$ be a finitely generated group, and $H$ a normal subgroup of $G$, then the pair $(G,H)$ has the rapid decay property if and only if the quotient group $G/H$ has the rapid decay property.
\end{prop}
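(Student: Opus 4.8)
The plan is to push the hybrid convolution inequality of Proposition \ref{prop:equivalence rd_H}(2) down to the quotient $Q:=G/H$ along the map $\pi_\sharp:\cg\to\mathbb{C}Q$ of Remark \ref{rmk:equiv(2,1)}, and to recognise that it becomes exactly the rapid decay inequality for $Q$ (the $H=\{e\}$ case, i.e.\ Proposition 4.1 of \cite{IC}). Two structural facts do the work: $\pi_\sharp$ is an algebra homomorphism --- a one-line rearrangement of the convolution sum along the fibres of $\pi$ --- and, by Remark \ref{rmk:equiv(2,1)}, $\|f\|_{(2,1)}=\|\pi_\sharp(|f|)\|_2$, so $\pi_\sharp$ restricts to a \emph{norm-preserving} map from $(\mathbb{R}_+G,\|\cdot\|_{(2,1)})$ onto $(\mathbb{R}_+Q,\|\cdot\|_2)$.

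For the setup, fix a finite generating set $S$ of $G$ and give $Q$ the word length of $\pi(S)$; then $\ell_Q(\pi(\gamma))\le\ell(\gamma)$ and $\ell_Q(q)=\min\{\ell(\gamma):\pi(\gamma)=q\}$, so $\pi_\sharp$ sends $B(R)$-supported functions to $B_Q(R)$-supported ones, and there is an injective section $\sigma:Q\to G$ with $\ell(\sigma(q))=\ell_Q(q)$. For positive $\bar g$ on $Q$ supported in $B_Q(R)$, the lift $\widehat g:=\sum_q\bar g(q)\,\delta_{\sigma(q)}$ is positive, supported in $B(R)$, and meets each $H$-coset in at most one point, hence $\pi_\sharp(\widehat g)=\bar g$ and $\|\widehat g\|_{(2,1)}=\|\bar g\|_2$. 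Thus $f\mapsto\pi_\sharp(f)$ maps $\{f\in\mathbb{R}_+G:\operatorname{supp}f\subseteq B(R)\}$ \emph{onto} $\{\bar g\in\mathbb{R}_+Q:\operatorname{supp}\bar g\subseteq B_Q(R)\}$, compatibly with the norms.

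Now combine these. By Remark \ref{rmk:positive}, condition (2) of Proposition \ref{prop:equivalence rd_H} for $(G,H)$ holds iff it holds for all $f,\phi\in\mathbb{R}_+G$ with $\operatorname{supp}f\subseteq B(R)$ (the general $\phi\in\etogh$ following by density, left translation being isometric on $\etogh$). For such $f,\phi$ the convolution $f*\phi$ is again positive, so applying $\pi_\sharp$, using multiplicativity and the norm identities, turns the inequality $\|f*\phi\|_{(2,1)}\le C(R+1)^D\|f\|_{(2,1)}\|\phi\|_{(2,1)}$ into $\|\bar f*\bar\phi\|_2\le C(R+1)^D\|\bar f\|_2\|\bar\phi\|_2$ with $\bar f=\pi_\sharp(f)$, $\bar\phi=\pi_\sharp(\phi)$. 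As $f,\phi$ vary, $(\bar f,\bar\phi)$ runs over all positive pairs on $Q$ with $\bar f$ supported in $B_Q(R)$ (and arbitrary $\bar\phi\in\ell^2(Q)_+$, up to density), so by the positive-function reduction applied to $Q$ this family is exactly condition (2) for $(Q,\{e\})$, i.e.\ the rapid decay property for $Q$; both implications are obtained at once. Equivalently, since $H$ is normal one has $\lambda_{G/H}=\lambda_Q\circ\pi$, so Lemma \ref{lem:operateur quotient vs hybride}(2) identifies $\|f\|_h$ with $\|\pi_\sharp(f)\|_{C^*_r(Q)}$ on $\mathbb{R}_+G$ and one may transport condition (3) instead.

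The main point requiring care --- and the only genuine subtlety --- is that $\pi_\sharp$ is merely an $\ell^2$-contraction from $(\cg,\|\cdot\|_{(2,1)})$ to $\ell^2(Q)$ and is isometric only on the positive cone; this is why the reduction to positive-valued functions of Remark \ref{rmk:positive} must be invoked on both sides and why one works throughout with $\mathbb{R}_+G$ and $\mathbb{R}_+Q$. Everything else --- the behaviour of word length under a quotient map, the existence of a length-minimising injective section, and the density arguments passing between $\cg$ and $\etogh$ --- is routine.
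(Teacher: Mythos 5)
Your argument is correct and is essentially the paper's own proof: one direction pushes condition (2) of Proposition \ref{prop:equivalence rd_H} down through the homomorphism $\pi_\sharp$ (which is multiplicative and satisfies $\|f\|_{(2,1)}=\|\pi_\sharp(|f|)\|_2$ on positive functions), and the other lifts through a length-minimizing section $\sigma$ with $\ell\circ\sigma=\ell_Q$, exactly as the paper does following Garncarek. Your bookkeeping in the converse --- computing $\|\widehat{\bar f}*\widehat{\bar\phi}\|_{(2,1)}$ by applying $\pi_\sharp$ again to the (positive) convolution rather than asserting that the lift itself intertwines convolution --- is if anything slightly more careful than the paper's claim $\widetilde{f*k}=\tilde f*\tilde k$, which is literally valid only after projecting back to cosets.
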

\begin{proof}With the notations or Remark \ref{rmk : pol growth}, if $S$ is a generating set of $G$, then for all $\gamma\in G$, we always have $\ell_Q(\pi(\gamma))\leq\ell_G(\gamma)$, where $\ell_Q$ denotes the word length function of $Q=G/H$ with respect to the generating set $\pi(S)$. Hence for $f\in\cg$ supported in $B(R)$, the support of $\pi_\sharp(f)$ is contained in $B_Q(R)$, the ball of radius $R$ in $Q$. Moreover, for all $f\in \mathbb{C}G$ and $\phi\in\etogh$ we have
\[
\|f*\phi||_{(2,1)}=||\pi_\sharp(|f*\phi|)||_2
\leq 
\|\pi_\sharp(|f|*|\phi|)\|_2=\|\pi_\sharp(|f|)*\pi_\sharp(|\phi|)\|_2.\]
 By assumption $Q$ has the rapid decay property, which implies that there exists constants $C,D\geq 0$ such that 
 \[\|\pi_\sharp(|f|)*\pi_\sharp(|\phi|)\|_2
\leq 
C(R+1)^D\|\pi_\sharp(|f|)\|_{2}\|\pi_\sharp(|\phi|)\|_{2}.\]
Hence, the pair $(G,H)$ has the rapid decay property. For the converse, we follow the proof of Theorem 3.1 of \cite{garncarek2015property}. We choose a section $s:Q\rightarrow G$ for $\pi$, satisfying that $\ell_Q=\ell_G\circ s$. In particular, $s$ sends the neutral element of $Q$ on the neutral element of $G$. Associated with the choice of $s$, we define a cocycle
$$c:Q\times Q \rightarrow H, (p,q)\mapsto s(p)s(q)s(pq)^{-1}$$
and an action
$$\alpha:Q\rightarrow {\rm Aut}(H), q\mapsto 
\alpha_q(h)=s(q)hs(q)^{-1}.$$
It follows that 
$(q,h)\rightarrow s(q)h$ identifies $G$ with $Q\times H$ with the
product structure $(p,h)(q,l)=(pq,h\alpha_p(l)c(p,q))$. With this choice of cross-section, we define $\widetilde{\ }:\mathbb{C}Q\to\cg$ by
\[\left\{
\begin{array}{l}
\tilde{f}(q,e)=f(q) ~~~~~~~~~~~~~~\hbox{ for all } q\in Q \\
\tilde{f}(q,h)=0  ~~~~~~~~~~~~~~~~~~~~~~\hbox{ for all }h\neq e,
\end{array}
\right.\]
where $f\in\mathbb{C}Q$ and $e$ is the neutral element in $H$. This is an algebra morphismes that isometrically extends to the $\ell^2$ completion, so that $\|\tilde{f}\|_{(2,1)}=\|f\|_{2}$, and $\widetilde{f*k}=\tilde{f}*\tilde{k}$ (the first convolution is in $Q$ and the second one in $G$). Let $f\in\mathbb{R}_+Q$ with support on $B_Q(R)$, then $\tilde{f}$ is supported on a ball of radius $R$ in $G$ and if $(G,H)$ has the rapid decay property, then there exists $C,D>0$ such that for all $\phi\in\ell^2(Q)_+$ then
$$
\|f*\phi\|_{2}=\|\tilde{f}*\tilde{\phi}\|_{(2,1)}\leq C(R+1)^D
\|\tilde{f}\|_{(2,1)}\|\tilde{\phi}\|_{(2,1)}=C(R+1)^D\|f\|_{2}\|\phi\|_{2}$$
and the group $Q$ has the rapid decay property.
\end{proof}
\begin{ex}\label{ex:normal} The semi-direct product $SL_2(\mathbb{Z})\ltimes\mathbb{Z}^2$ contains solvable subgroups with exponential growth, but the pair $(SL_2(\mathbb{Z})\ltimes\mathbb{Z}^2, \mathbb{Z}^2$) has rapid decay since the quotient is $SL_2(\mathbb{Z})$, which is a hyperbolic group, and those have the rapid decay property (see\cite{de1988groupes}). Baumslag-Solitar groups $BS(p,q)$ with $|p|\neq |q|$ have exponentially distorted amenable subgroups, and they admit a canonical surjective morphism over $\mathbb{Z}$. In particular for $n\in\mathbb{N}^*$, then $BS(1,n)\simeq\mathbb{Z}\ltimes\mathbb{Z}[\frac{1}{n}]$ and the pair $(BS(1,n), \mathbb{Z}[\frac{1}{n}])$
has the rapid decay property.
\end{ex}
We mention the following generalization of Proposition \ref{thm:cas normal}, which is the case where $H=K$ in what follows.
\begin{prop}[Common quotient stability] Let $(G,H)$  be a pair of groups with the rapid decay property and let $K<H$ be normal in $G$. Then the pair $(G/K, H/K)$ has rapid decay property as well.
\end{prop}
\begin{proof} Let $Q=G/K$ and $P=H/K$. Then $G$ (resp. $H$) is as a set the product $Q\times K$ (resp. $P\times K)$ with the group structure seen in the proof of Proposition \ref{thm:cas normal}.
Hence, for all $F\in \mathbb{R}_+Q$ (resp $\Psi\in l^{(2,1)}(Q,P)$) we can define 
the functions $f\in \mathbb{R}_+G$ and $\phi\in \etogh_+$ as follows:
for all $k\in K$, \[
\left\{
\begin{array}{l}
f(q,e)=F(q) ~~~~~~~~~~~~~~\forall q\in Q \\
f(q,h)=0  ~~~~~~~~~~~~~~~~~~~~~~\forall h\neq e

\end{array}
\right.
\]
\[
\left\{
\begin{array}{l}
\phi(q,e)=\Psi(q) ~~~~~~~~~~~~~~\forall q\in Q \\
\phi(q,h)=0  ~~~~~~~~~~~~~~~~~~~~~~\forall h\neq e

\end{array}
\right.
\]
Hence with this choice of functions, we have $\|f*\phi\|_{(2,1)(G,H)}=\|F*\Psi\|_{(2,1)(P,Q)}$. Since $(G,H)$ has rapid decay,
there exists $C,D>0$ such that 
$$\|f*\phi|_{(2,1)(G,H)}\leq C(1+R)^D\|f\|_{(2,1)(G,H)}\|\phi|_{(2,1)(G,H)}.$$ Hence,
$$\|F*\Psi\|_{(2,1)(P,Q)}\leq  C(1+R)^D \|F\|_{(2,1)(P,Q)}\|\Psi\|_{(2,1)(P,Q)}.$$
Which implies that $(P,Q):=(G/K,H/K)$ has the rapid decay property with the same degree as for the pair $(G,H)$.
\end{proof}
\subsection{Relative growth and co-amenability}\label{section: croissance et comoyenabilite}
This subsection aims to prove Theorem \ref{thm:cas coamenable} stated in the introduction. We start by explaining what it means for a quotient to have polynomial growth when the subgroup is not assumed normal.
\begin{definition}
Let $G$ be a finitely generated group with generating set $S$, and let $H<G$ be a subgroup. We denote by ${\bf S}(G,H,S)$ the {\it left Schreier graph with respect to the subgroup $H$}, which is the graph where vertices are the elements of $G/H$, and for all $s\in S$ two different cosets $gH$ and $sgH$ are linked by an edge of length 1.
\end{definition}
The graph ${\bf S}(G,H,S)$ is constructed as follows. Start with the base vertex, corresponding to the coset $H$, and use the left multiplication from $G$. If $s\in S\cap H$, then $sH=H$ and add a loop. Otherwise, $sH\not=H$ and there is an edge $\{H,sH\}$. Continue this process with the convention that when the left multiplication by an element of $S$ stabilizes the coset $gH$, then we add a loop on the vertex $gH$, and otherwise $gH$ and $sgH$ share an edge. Notice that he word length of an element $x\in G$ is always greater than the distance between $xH$ and $H$ in ${\bf S}(G,H,S)$. Different finite generating sets of $G$ give quasi-isometric Schreier graphs. When $H$ is a normal subgroup of $G$, then ${\bf S}(G,H,S)$ is quasi-isometric to a Cayley graph of the quotient group $G/H$. However, in general Schreier graphs are not simplicial as they have loops and multi-edges.
\begin{rmk}\label{rmk:transitive_action=schreier}
When $G=<S>$ acts transitively on a set $X$, the graph related to this action is the graph with vertex the elements of $X$, where $x$ and $y$ in $X$ are linked by an edges if $y=s\cdot x$,  where $s\in S$. This is the Schreier graph ${\bf S}(G,H,S)$ where $H$ is the stabilizer of a point (see Section 2.8 of  \cite{juschenko2019amenability}). Moreover, if $X$ is a metric space and $G$ acts by isometries, then the argument in the proof of Svarc-Milnor lemma (see IV.B Theorem 23 of \cite{dlh}) shows that the growth of $G/H$ (as in Definition \ref{def:growth}) is asymptotically bounded by the volume of the balls in $X$. The growth of Schreier graphs ${\bf S}(G,H,S)$ is well-defined and doesn't depend on generating sets.
\end{rmk}
\begin{lem}
\label{lem:polygrowth}
Let $G$ be a finitely generated group and $H$ a subgroup. If ${\bf S}(G,H,S)$ has polynomial growth bounded by a polynomial $Q$, then for every $f\in\cg$ supported on a ball of radius $R$ we have that
$$\|f\|_h\leq Q(R)\|f\|_{(2,1)}$$ 
and the pair $(G,H)$ has the rapid decay property.
\end{lem}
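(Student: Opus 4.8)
The plan is to obtain the estimate from the elementary bound $\|f\|_h\le\|f\|_1$ recorded in Remark \ref{rmk:injection l^1}, combined with a comparison between the $\ell^1$-norm and the hybrid $(2,1)$-norm that is governed by how many cosets of $H$ the support of $f$ can meet. Fix $f\in\cg$ supported on the ball $B(R)=\{\gamma\in G\mid\ell(\gamma)\le R\}$ (by left-invariance of all norms involved one may assume the ball is centred at $e$). If $f(\gamma)\ne 0$ then $\ell(\gamma)\le R$, and since the word length of $\gamma$ is at least the distance from $\gamma H$ to the base vertex $H$ in $\mathbf{S}(G,H,S)$, the coset $\gamma H$ lies in the Schreier-graph ball $B_{\mathbf{S}}(H,R)$. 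By the polynomial growth hypothesis $|B_{\mathbf{S}}(H,R)|\le Q(R)$, so $f$ is supported on at most $Q(R)$ left cosets of $H$.

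Next I would apply Cauchy--Schwarz over those cosets: writing $\|f\|_1=\sum_{gH\in G/H}\|f|_{gH}\|_1$, in which all but at most $Q(R)$ of the terms vanish, gives $\|f\|_1\le\sqrt{Q(R)}\,\|f\|_{(2,1)}$. Since a ball contains its centre we have $Q(R)\ge 1$, hence $\sqrt{Q(R)}\le Q(R)$, and combining with $\|f\|_h\le\|f\|_1$ from Remark \ref{rmk:injection l^1} yields $\|f\|_h\le Q(R)\|f\|_{(2,1)}$, exactly the asserted inequality. (Alternatively one could route through Lemma \ref{lem:operateur quotient vs hybride}(2) together with the fact that a unitary representation extends to a contraction of $\eog$, getting $\|f\|_h=\|\lambda_{G/H}(|f|)\|_*\le\||f|\|_1=\|f\|_1$ for $f\in\mathbb{R}_+G$; but the route through Remark \ref{rmk:injection l^1} works verbatim for all $f\in\cg$ and needs no reduction to positive functions.)

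Finally, since every polynomial $Q$ satisfies $Q(R)\le C(R+1)^D$ for suitable constants $C,D\ge 0$, the inequality just proved is precisely condition (3) of Proposition \ref{prop:equivalence rd_H}, so the pair $(G,H)$ has the rapid decay property. I do not expect a genuine obstacle: the only slightly delicate point is the combinatorial observation that a ball-supported function meets at most $|B_{\mathbf{S}}(H,R)|$ cosets. One might be tempted to chase a finer Haagerup-style almost-orthogonality estimate (as is genuinely needed for trees or relatively hyperbolic situations), but that is unnecessary here precisely because polynomial growth of the Schreier graph already makes the crude $\ell^1$-versus-$(2,1)$ comparison sharp enough.
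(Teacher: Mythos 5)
Your proof is correct and is essentially the paper's argument: the authors likewise bound $\|f\|_h\le\|f\|_1$ via Remark \ref{rmk:injection l^1} and then compare $\|f\|_1=\|\pi_\sharp(|f|)\|_1$ with $\|\pi_\sharp(|f|)\|_2=\|f\|_{(2,1)}$ by Cauchy--Schwarz over the at most $Q(R)$ cosets in the Schreier-graph ball of radius $R$ (their Remark \ref{rmk : pol growth} applied to $G/H$), which is exactly your coset-counting step phrased through $\pi_\sharp$. The conclusion via condition (3) of Proposition \ref{prop:equivalence rd_H} is also the same.
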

\begin{proof} For every $f\in\cg$ supported on a ball of radius $R$, then $\pi_\sharp(f)$ is supported on a ball of radius $R$ as well and
$$\|f\|_h\leq \|f\|_1=\|\pi_\sharp(|f|)\|_1\leq Q(R)\|\pi_\sharp(|f|)\|_2=Q(R)\|f\|_{(2,1)},$$
using Remark \ref{rmk : pol growth}.
\end{proof}
\begin{definition}\label{def:co-amenability}
Let $G$ be a group acting on a discrete set $X$, the action of $G$ on $X$ is called {\it amenable}\footnote{this is the amenability of the action in the sense of Eymard and not Zimmer, see \cite{AR} for the discussion.} if the following \textit{F\o lner condition} holds: for all $\epsilon>0$ and finite set $F\subseteq G$, there exists a non-empty finite set $V=V(\epsilon,F)$ in $X$ such that 
\[  
\frac{|F\cdot V\bigtriangleup V|}{|V|}\leq\epsilon.
\]
In particular, a subgroup $H$ of $G$ is said to be 
\textit{co-amenable} if the action of $G$ on $G/H$ by left translation is amenable.
\end{definition}
\begin{ex}
Finite index subgroups are co-amenable. More generally if $H$ is a subgroup of $G$ such that the growth of $G/H$ is subexponential, then $H$ is co-amenable in  $G$ (see \cite{introamenabl}). If $H$ is a normal subgroup of $G$, then the co-amenability of $H$ is equivalent to the amenability of the quotient. In \cite{diekert2017amenability}, it was shown that if $G$ is a certain kind of HNN-extension of $H$, then the spectral radius of the symmetric random walk on the Schreier graph ${\bf S}(G,H,S)$ is equal to 1, which is an equivalent definition of co-amenability.
\end{ex}
The following generalizes Leptin's criterion of amenability in \cite{leptin}.
\begin{prop}
\label{prop: égalité amenable l^1}
Let $G$ be a discrete group. A subgroup $H<G$ is co-amenable in $G$ if, and only if, for any $f\in\mathbb{R}_+G$, then $\|f\|_{h}=\|f\|_{1}$. 
\end{prop}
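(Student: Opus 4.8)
The plan is to prove the two implications separately, using the F\o lner characterization of co-amenability on one side and a contrapositive (or explicit test function) argument on the other. Throughout I would work with the identification of Remark \ref{rmk:equiv(2,1)}: for $f\in\mathbb{R}_+G$ one has $\|f\|_{(2,1)}=\|\pi_\sharp(f)\|_2$, where $\pi_\sharp(f)\in\ell^2(G/H)$ is obtained by summing $f$ over each coset. Note that $\|f\|_h\le\|f\|_1$ always holds by Remark \ref{rmk:injection l^1}, so the content is the reverse inequality when $H$ is co-amenable, and the failure of that inequality when $H$ is not.

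For the direction ``$H$ co-amenable $\Rightarrow\|f\|_h=\|f\|_1$'', fix $f\in\mathbb{R}_+G$ supported on a finite set $F\subseteq G$ with $\|f\|_1=1$. Given $\epsilon>0$, apply the F\o lner condition to the finite set $F^{-1}\cdot\mathrm{supp}(\pi_\sharp f)\cup(\text{something})$ acting on $G/H$ to get a finite nonempty $V\subseteq G/H$ with $|F^{-1}\cdot V\triangle V|/|V|$ small. The idea is then to build a test vector $\phi\in\etogh_+$ supported on (a transversal lift of) the cosets in $V$, essentially the normalized indicator, with $\|\phi\|_{(2,1)}\approx|V|^{1/2}$. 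A direct computation of $\|f*\phi\|_{(2,1)}=\|\pi_\sharp(f*\phi)\|_2=\|\pi_\sharp(f)*\chi_V\|_2$ (convolution now on the $G$-set $G/H$, using $\pi_\sharp(f*\phi)=\pi_\sharp(f)*\pi_\sharp(\phi)$ for positive functions) shows that for $v$ in the ``interior'' of $V$ — those $v$ with $F^{-1}v\subseteq V$ — one gets $(\pi_\sharp(f)*\chi_V)(v)=\|f\|_1=1$, and the F\o lner condition guarantees that all but an $\epsilon$-fraction of $V$ lies in this interior. Hence $\|f*\phi\|_{(2,1)}\ge(1-\epsilon)^{1/2}|V|^{1/2}\ge(1-\epsilon)^{1/2}\|\phi\|_{(2,1)}/(1+o(1))$, giving $\|f\|_h\ge(1-\epsilon)^{1/2}\|f\|_1$ after dividing; let $\epsilon\to0$.

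For the converse, I would argue the contrapositive: if $H$ is not co-amenable in $G$, then by (the $\ell^2$ form of) Reiter's or Leptin's criterion applied to the transitive $G$-action on $G/H$, there is a finite symmetric $F\subseteq G$ and $\delta>0$ such that $\|\,|F|^{-1}\sum_{s\in F}\lambda_{G/H}(s)\xi - \xi\,\|_2 \ge \delta\|\xi\|_2$ for all $\xi\in\ell^2(G/H)$; equivalently the averaging operator has norm $<|F|$, say $\|\sum_{s\in F}\lambda_{G/H}(s)\|_*\le |F|-\eta$ for some $\eta>0$. Take $f=\sum_{s\in F}\delta_s\in\mathbb{R}_+G$, so $\|f\|_1=|F|$. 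By Lemma \ref{lem:operateur quotient vs hybride}(2), $\|f\|_h=\|\lambda_{G/H}(f)\|_*\le|F|-\eta<\|f\|_1$, contradicting $\|f\|_h=\|f\|_1$. (Here I am using that non-co-amenability of the action is exactly the statement that the spectral radius of some, equivalently any, symmetric generating random walk on the Schreier graph is $<1$, which in particular makes $\|\lambda_{G/H}(f)\|_*<\|f\|_1$ for the corresponding $f$.)

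The main obstacle I anticipate is the bookkeeping in the forward direction: making sure the test function $\phi$ lifted from $\chi_V\in\ell^2(G/H)$ to $\etogh_+$ has the right $(2,1)$-norm and that $\pi_\sharp(f*\phi)=\pi_\sharp(f)*\chi_V$ genuinely holds — this is where positivity of $f$ and $\phi$ is essential, since otherwise $\pi_\sharp$ does not commute with convolution. One clean way to package this is to choose a transversal $R\subseteq G$ for $G/H$ and set $\phi=\sum_{gH\in V}\delta_{g}$ with $g\in R$; then $\|\phi\|_{(2,1)}=|V|^{1/2}$ exactly, and $f*\phi$ restricted to a coset $gH$ has $\ell^1$-norm equal to $\sum_{sH\in V}\sum_{s'\in F\cap gHs^{-1}}f(s')=(\pi_\sharp(f)*\chi_V)(gH)$, which is what we want. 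The rest is the standard F\o lner estimate. The reverse direction is essentially a citation of Leptin's criterion combined with Lemma \ref{lem:operateur quotient vs hybride}, so it should be short.
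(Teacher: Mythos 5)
Your proposal is correct and follows essentially the same route as the paper: the forward direction is the same F\o lner test-vector estimate (you lift the indicator of the F\o lner set to $\etogh$ along a transversal and compute $\|f*\phi\|_{(2,1)}=\|\lambda_{G/H}(f)\chi_V\|_2$, which is exactly the computation behind Lemma \ref{lem:operateur quotient vs hybride}, while the paper plugs the normalized indicator of $FV$ into $\lambda_{G/H}$ and invokes that lemma), and your converse is just the contrapositive of the paper's argument, using the same characterization of co-amenability by almost invariant vectors in the quasi-regular representation (a Kesten-type spectral gap) together with $\|f\|_h=\|\lambda_{G/H}(f)\|_*$ for positive $f$. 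The only loose ends are routine: adjusting the F\o lner parameter by a factor $|F|$ to control the ``interior'' of $V$, and noting that the spectral-gap statement needs no finite generation of $G$, only a finite symmetric set witnessing the failure of almost invariance.
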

Notice that, just as the amenability characterization by the equality $\|f\|_*=\|f\|_1$ for every positive function doesn't imply that $\eog$ equals to $\cstr$ in general, the Banach algebras $\eog$ and $\cshyb$ differ.
\begin{proof}
According to Remark \ref{rmk:injection l^1} the inequality  $\|f\|_{h}\leq\|f\|_{1}$ always holds, so we start by proving the reverse inequality in the co-amenable case. By Definition \ref{def:co-amenability}, for any $\epsilon>0$ and $F=\hbox{supp}(f)$, there exists a finite set 
$V(F,\epsilon)=V\subseteq G/H$ such that  
$$\frac{|FV\bigtriangleup V|}{|V|}\leq\epsilon.$$
Consider $F^{-1}(FV)=\{\bar{g}\in FV| \ z^{-1}\bar{g}\in FV \hbox{ for all } z\in F \}$, then $V\subseteq F^{-1}(FV)$ and one checks that $FV= F^{-1}(FV)\cup (FV\bigtriangleup V)$, where
\[FV\bigtriangleup V=
\{\bar{a}\in FV|\bar{a} \notin V\}\cup\{\bar{a}\in V|\bar{a} \notin FV\},\]
so that F\o lner's condition implies that 
\[
 \frac{|F^{-1}(FV)|}{|FV|}\geq
 1-\epsilon.
\]
Now, take $\xi\in\ell^2(G/H)$ to be the normalized characteristic function of $FV$, so that $\|\xi\|_{2}=1$, and hence 
\begin{eqnarray*}
 \|\lambda_{G/H}(f)\xi\|_{2}^2 &=&
\frac{1}{|FV|}\sum_{\bar{g}\in G/H}\left(\sum_{\gamma\in G} f(\gamma)\mathbf{1}_{FV}(\gamma^{-1}\bar{g})\right)^2 \\
&\geq &\frac{1}{|FV|}\sum_{\bar{g}\in F^{-1}(FV)}\left(\sum_{\gamma\in G} f(\gamma)\mathbf{1}_{FV}(\gamma^{-1}\bar{g})\right)^2
=\|f\|_{1}^2(1-\epsilon),
\end{eqnarray*}
so that $\|\lambda_{G/H}(f)\|_*\geq\|f\|_1$ and we conclude using Lemma \ref{lem:operateur quotient vs hybride}. 

Conversely, if $\|f\|_{h}=\|f\|_{1}$ for any $f\in\mathbb{R}_+G$ and $S$ is a finite symmetric generating set of $G$, then for $f=\frac{\mathbf{1}_S}{|S|}$ we have that $\|f\|_{h}=1$ and hence, for every $\varepsilon>0$, there is a unit vector $\xi\in\ell^2(G/H)_+$ such that 
\[
\frac{1}{|S|}\langle\sum_{s\in S}\lambda_{G/H}(s)\xi,\xi\rangle>1-\varepsilon.
\]
Since $S$ is symmetric, for all $s\in S$ we have
\[
\|\lambda_{G/H}(s)(\xi)-\xi\|_{2}^2\leq 
\sum_{s\in S}\|\lambda_{G/H}(s)(\xi)-\xi\|_{2}^2
\leq 2|S|\varepsilon,
\]
and hence the left quasi-regular representation has almost invariant vectors, which is an equivalent definition of co-amenability (see Definition 2.2 and Remark 2.3 of \cite{stokke2006amenable}).
\end{proof}
We can now prove our main result of this section, namely that co-amenable pairs with the rapid decay property must have polynomial co-growth.
\begin{proof}[Proof of Theorem \ref{thm:cas coamenable}]
According to Lemma \ref{lem:polygrowth}, if the coset space $G/H$ has polynomial growth, then the pair $(G,H)$ has the rapid decay property so let us look at the converse, assuming that the pair $(G,H)$ has the rapid decay property and $H$ is co-amenable in $G$. For a $\bar{k}\in G/H$, let us enumerate the elements in the ball $B_{G/H}(\bar{k},R)=\{\bar{g}_1,\dots,\bar{g}_m\}$ where $m=|B_{G/H}(\bar{k},R)|$. In all the cosets $g_iH$ with $i=1,\dots,m$, we can chose one element $g_i\in\bar{g_i}$, of minimal length in $g_iH\subseteq G$ and  
 define the function $f\in\mathbb{R}_+G$ to be the characteristic function of this set $\{g_1,\dots,g_m\}$, so that $\|f\|_{1}=|B_{G/H}(\bar{k},R)|=m$, and $\|f\|_{(2,1)}=\sqrt{m}$. Moreover, $f$ is supported in a ball of radius $R+K$ in $G$ where $K=d(H,kH)$ in ${\bf S}(G,H,S)$. According to Proposition 
 \ref{prop: égalité amenable l^1} the rapid decay property for the pair $(G,H)$ implies that there is a polynomial $P$ such that
 \[|B_{G/H}(\bar{k},R)|=\|f\|_{1}=\|f\|_{h}\leq P(R+K)\|f\|_{(2,1)}=\sqrt{|B_{G/H}(\bar{k},R)|}.\]
Since this computation holds for all $\bar{k}\in G/H$ it proves poynomial co-growth of the subgroup $H$ in $G$. 
\end{proof}
\begin{ex}
\label{ex:RDH obstruction}
The Baumslag-Solitar group BS(1,2)$=\langle a, t| tat^{-1}=t^2\rangle$, is solvable and hence amenable, so that all its subgroup are co-amenable, and by Theorem~\ref{thm:cas coamenable} neither the pair $({\rm BS}(1,2),\left<a\right>)$ nor the pair $({\rm BS}(1,2),\left<t\right>)$ have the rapid decay property because in both cases the quotient has exponential growth.
\end{ex}
\subsection{Relatively hyperbolic groups}
\label{section relhyp}
Corollary \ref{thm:RD_H relhyp} is a consequence of Proposition \ref{prop:H poly} combined with the fact that those relatively hyperbolic groups have rapid decay according to Theorem 0.1 of \cite{chruan}. The results of Jvbin Yao in \cite{JYao} show that in case $H<G$ is malnormal, then if the pair $(G,H)$ has rapid decay then that forces polynomial volume growth of $H$ and his argument gives the following example.
\begin{ex}\label{free}Let $G=F_3=\left<a,b,t\right>$ and $H=\left<a,b\right><G$ so that in particular $G$ is hyperbolic relative to $H$. Let $\xi_R$ denote the characteristic function of the ball of radius $R$ in $H$, and let $f_R=\xi_R*\delta_t\in {\mathbb R}G$. Then 
$$\|f_R\|_{(2,1)}=\|f_R\|_2=\sqrt{2\cdot 3^{R-1}}$$ 
since all the elements of the form $wt$ with $w\in H$ are in different $H$-cosets, but
$$\|f_R\|_h\geq\|f_R*\delta_{t^{-1}}\|_{(2,1)}=\|\xi_R\|_{(2,1)}=\|\xi_R\|_1=2\cdot 3^{R-1}$$
so that the pair cannot satisfy condition (3) in Proposition \ref{prop:equivalence rd_H}.
\end{ex}
%
\section{Consequences of the rapid decay property for a pair of groups}\label{consequences} 
The rapid decay property for a pair of groups has consequences similar to the rapid decay property for a group, in terms of spectral embeddings, as well as in terms of random walks.
\subsection{Spectral embeddings}
Our goal is to prove Theorem \ref{prop:isoKtheory}, namely that the space $\textbf{H}_\ell^s(G,H)$ is a Banach algebra and that $K_i(\textbf{H}_\ell^s(G,H))\simeq K_i(\cshyb)$. We recall a few known facts on the K-theory of Banach algebras. Let $A$ be a unital Banach algebra, we denote by $A^\times$ the set of invertible elements of $A$. The {\it spectral radius} of an element $x\in A$ is given by
\[
\rho_A(x)=\sup\{|\lambda|\,|{\lambda\in \sigma_A(x)}\}=\lim_{n\to\infty} \|x^n\|_A^{\frac{1}{n}},
\]
where the {\it spectrum} $\sigma_A(x)=\{\lambda\in \mathbb{C}\ | \ x-\lambda1_A \notin A^\times \}$.
\begin{notation}\label{not:spec}
 For $G$ a finitely generated group, $H<G$ a subgroup and $x\in\cg$ we denote by $\rho_1(x)$ the spectral radius of $x$ as an element in the Banach algebra $\eog$, by $\rho_h(x)$ for the Banach algebra $\cshyb$, by $\rho_*(x)$ for the C*-algebra $\cstr$ and in case where the pair $(G,H)$ has the rapid decay property by $\rho_s(x)$ the spectral radius of $x$ as an element in the Banach algebra $\textbf{H}_\ell^s(G,H)$.
\end{notation}
\begin{rmk}\label{rmk:spec_s=t} If a pair of groups $(G,H)$ has the rapid decay property, then $\rho_s(f)=\rho_t(f)$ for any $f\in\cg$ and any $t\leq s$ large enough for $\textbf{H}_\ell^s(G,H)$ and $\textbf{H}_\ell^t(G,H)$ to be Banach algebras. Indeed, if $f$ is supported on $B(R)$, then the $n$-th convolution $f^{(n)}$ is supported on a ball of radius $nR$ and we have:
\[
\|f^{(n)}\|_{t,(2,1)}^{\frac{1}{n}}\leq \|f^{(n)}\|_{s,(2,1)}^{\frac{1}{n}}
\leq (1+nR)^{\frac{s}{n}} \|f^{(n)}\|_{t,(2,1)}^{\frac{1}{n}}
\]
so we conclude by taking the limit as $n$ goes to infinity.   
\end{rmk}
\begin{definition}
\label{def:morphism spec}
Let $A$ and $B$ two Banach algebras. An injective morphism $\phi: A\rightarrow B$ is
\begin{itemize}
\item \textit{Spectral} if for all $x\in A$, its image $\phi(x)\in B^\times$ if and only if $x\in A^\times$.
\item \textit{Relatively spectral} with respect to a dense sub-algebra $X\subseteq A$ if  $\phi:X\rightarrow B$ is spectral.
\item \textit{Isoradial} if $\rho_A(x)=\rho_B(\phi(x))$ for all $x\in A$.
\end{itemize}
\end{definition}
If the Banach algebras are completions of the group algebra $\mathbb{C}G$, it suffices to prove that the embedding is isoradial for all elements in $\mathbb{C}G$ to get an isomorphism in K-theory. Indeed, if $\phi$ has dense range and is isoradial, then $\phi$ is spectral and according to Nica in \cite{nica2008relatively}, a spectral morphism $\phi:A\rightarrow B$ between two Banach algebras induces an isomorphism in K-theory $\phi_*:K_*(A)\rightarrow K_*(B)$.
\begin{definition}
 Let $A(G)$ and $B(G)$ two Banach algebras which are completions of $\mathbb{C}G$ such that $A(G)\subseteq B(G)$. We say that $A(G)$ is {\it relatively spectral} in $B(G)$ if the inclusion is relatively spectral with respect to  $\mathbb{C}G$. 
\end{definition}
The Banach algebra $\eog$ is relatively spectral in $\cstr$ if and only if 
it is a  quasi-hermitian algebra (that is, self-adjoint elements in $\mathbb{C}G$ have real spectrum in $\eog$). This is always the case when the group $G$ has polynomial volume growth, but never happens when $G$ contains a 
free sub semi-group with two generators (see \cite{samei2020quasi} and \cite{jenkins1970symmetry}). We now turn to the proof of Theorem \ref{prop:isoKtheory}
\begin{proof}[Proof of Theorem \ref{prop:isoKtheory}]
We follow the proof of Propositions 8.10 and 8.16 in \cite{valette} to show that the inclusion ${\bf H}^{\infty}(G,H)\subseteq\cshyb$ is spectral. To do that we denote by $m_\ell$ the unbounded operator of pointwise multiplication by $\ell$ on $\etogh$ and define the following unbounded derivation on $\cshyb$, taking values in the bounded operators on $\etogh$:
\[
d:\psi\rightarrow [m_\ell,\psi].
\] 
We first show that
\begin{equation}\label{Der}
    D:=\bigcap_{k\geq 0}{\rm Dom}(d^k)={\bf H}^{\infty}_{\ell}(G,H),
\end{equation}
where ${\rm Dom}(d^k)$ is the domain of the derivation $d^k$. The inclusion $D\subseteq {\bf H}^{\infty}_{\ell}(G,H)$ is always true. Indeed, using an induction and the triangular inequality, we get that, for all $y\in G$ and $\phi,\psi\in\cg$
\[|(d^k(\psi)\phi)(y)|\leq \sum_{z\in G} |\psi(z)| |\phi(z^{-1}y)|\ell(z)^k=
|\psi|\ell^k*|\phi|(y)
\]
which implies that 
\begin{equation}\label{eq:deriv}
\|d^k(\psi)\phi\|_{(2,1)}\leq\|\,|\psi|\,\ell^k*|\phi|\,\|_{(2,1)}\leq 
\|\,|\psi|\,\ell^k\|_{h}\|\phi\|_{(2,1)}
\end{equation}
so that $\mathbb{C}G\subseteq D\subseteq\cshyb$. Moreover,  for all $k\geq 0$ and $\psi\in{\rm Dom}(d^k)$ we have that $(d^k(\psi)\delta_e)(\gamma)=\psi(\gamma)\ell(\gamma)^k\in \etogh$, which shows the first inclusion.

 If we furthermore assume that the pais $(G,H)$ has the rapid decay property, there exists constants $C,s>0$ such that for  $\psi\in {\bf H}^{\infty}_{\ell}(G,H)$ we have from the Equation (\ref{eq:deriv}) just above that
\[\|d^k(\psi)\|_{h}\leq\|\,|\psi|\ell^k\|_{h}\leq C\|\psi\|_{k+s,(2,1)}<\infty\]
which finishes to show Equation (\ref{Der}). According to Ji's result in \cite{Ji}, as stated in Proposition 8.12 of \cite{valette}, the algebra ${\bf H}^{\infty}_{\ell}(G,H)$ is spectral and dense in $\cshyb$. Since the inclusion
${\bf H}^{\infty}_{\ell}(G,H)\subseteq\cshyb$ factors through ${\bf H}^s_{\ell}(G,H)$, Proposition \ref{prop: Bs est une Banach alg} allows to conclude.
\end{proof}
\begin{lem}
\label{lem:spectral l^1}
For $G$ a finitely generated group and $H<G$ a subgroup such that the quotient $G/H$ has subexponential growth, then $\eog$ is a dense relatively spectral Banach sub-algebra of 
$\cshyb$. If moreover $G/H$ has polynomial growth, then for $s$ large enough the embedding of ${\bf H}^s_{\ell}(G,H)$ in $\eog$ is spectral.
\end{lem}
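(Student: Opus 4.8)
The plan is to reduce the statement to the two facts already available in the excerpt: first, Proposition \ref{prop: égalité amenable l^1} tells us that if $H$ is co-amenable in $G$ — which holds whenever $G/H$ has subexponential growth — then $\|f\|_h = \|f\|_1$ for every $f\in\mathbb{R}_+G$; second, for a \emph{general} $f\in\mathbb{C}G$ we have the inequality $\|f\|_h\leq\|f\|_1$ from Remark \ref{rmk:injection l^1}. To prove relative spectrality of $\eog$ in $\cshyb$ with respect to $\mathbb{C}G$, I would show that the two Banach algebras are \emph{isoradial} on $\mathbb{C}G$, i.e. $\rho_1(f)=\rho_h(f)$ for all $f\in\mathbb{C}G$; by the discussion just before Definition \ref{def:morphism spec} (Nica's theorem in \cite{nica2008relatively}), isoradiality on the dense subalgebra $\mathbb{C}G$ together with density of the inclusion already gives that $\eog\hookrightarrow\cshyb$ is relatively spectral. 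One inequality, $\rho_h(f)\leq\rho_1(f)$, is immediate since $\|\cdot\|_h\leq\|\cdot\|_1$ on all of $\mathbb{C}G$, hence on all convolution powers $f^{(n)}$, and one takes $n$-th roots and passes to the limit. For the reverse, write an arbitrary $f\in\mathbb{C}G$ and note that $|f^{(n)}|\leq |f|^{(n)}$ pointwise (triangle inequality in each convolution), so $\|f^{(n)}\|_1\leq\||f|^{(n)}\|_1$, and then $\||f|^{(n)}\|_1=\||f|^{(n)}\|_h$ by Proposition \ref{prop: égalité amenable l^1} applied to the positive function $|f|^{(n)}\in\mathbb{R}_+G$; finally $\||f|^{(n)}\|_h\leq\|\,|f|\,\|_h^{\,n}$ submultiplicatively. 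Taking $n$-th roots gives $\rho_1(f)=\lim\|f^{(n)}\|_1^{1/n}\leq\lim\||f|^{(n)}\|_h^{1/n}=\rho_h(|f|)$. One then observes $\rho_h(|f|)\leq\rho_1(|f|)$ from the trivial inequality again, and $\rho_1(|f|)=\lim\||f|^{(n)}\|_1^{1/n}$; but by the same co-amenability equality $\||f|^{(n)}\|_1=\||f|^{(n)}\|_h$, so all four quantities coincide and in particular $\rho_1(f)\le\rho_h(|f|)=\rho_h(f)$ whenever $f\ge 0$; for general $f$ one argues via the majorant $|f|$ and the standard fact that an element with $\rho_h(f)<1$ has $1-f$ invertible in $\cshyb$, combined with the observation that if $1-f$ is invertible in $\cshyb$ then, by a Neumann-series / majorant argument using $\rho_1(|f|)=\rho_h(|f|)$, $1-f$ is invertible in $\eog$ — which is precisely the spectral condition. (Concretely: $f$ with spectral radius $<1$ in $\eog$ iff $|f|$ has $\ell^1$-spectral radius $<1$ by a standard majorization for group algebras, and the latter equals the hybrid spectral radius of $|f|$, which dominates that of $f$.)

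For the second assertion, assume $G/H$ has polynomial growth, say the Schreier balls satisfy $\gamma_{{\bf S}(G,H,S)}(R)\le Q(R)$. By Lemma \ref{lem:polygrowth} the pair $(G,H)$ then has the rapid decay property, so by Proposition \ref{prop: Bs est une Banach alg} the space ${\bf H}^s_\ell(G,H)$ is a Banach algebra for $s$ large, and the chain of inclusions $\eog\subseteq{\bf H}^s_\ell(G,H)\subseteq\cshyb$ holds with all maps having dense range. I would again prove spectrality of $\eog\hookrightarrow{\bf H}^s_\ell(G,H)$ via isoradiality on $\mathbb{C}G$: for $f\in\mathbb{C}G$ supported in $B(R_0)$, the convolution power $f^{(n)}$ is supported in $B(nR_0)$, and using Remark \ref{rmk : pol growth} (polynomial growth of $H$ induced from $G$, which follows from polynomial growth of $G/H$ only in combination with growth of $H$ — here one uses instead directly the estimate $\|g\|_1\le$ (polynomial in radius)$\cdot\|g\|_{(2,1)}$ coming from Lemma \ref{lem:polygrowth}: $\|g\|_1=\|\pi_\sharp(|g|)\|_1\le Q(R)\|\pi_\sharp(|g|)\|_2=Q(R)\|g\|_{(2,1)}$ for $g$ supported in $B(R)$) one gets
\[
\|f^{(n)}\|_1\le Q(nR_0)\,\|f^{(n)}\|_{(2,1)}\le Q(nR_0)\,(1+nR_0)^{s}\,\|f^{(n)}\|_{s,(2,1)} .
\]
Taking $n$-th roots and letting $n\to\infty$, the polynomial prefactors die and yield $\rho_1(f)\le\rho_s(f)$; the reverse inequality $\rho_s(f)\le\rho_1(f)$ follows from $\|\cdot\|_{s,(2,1)}\le C(R+1)^s\|\cdot\|_{(2,1)}\le C(R+1)^s\|\cdot\|_1$ on $B(R)$ and the same root-and-limit argument. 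Hence $\rho_1=\rho_s$ on $\mathbb{C}G$, the inclusion is isoradial with dense range, hence spectral by Nica's criterion.

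The main obstacle I anticipate is the passage from the positive-function equality $\|f\|_h=\|f\|_1$ (valid only on $\mathbb{R}_+G$) to a genuine spectral statement about arbitrary $f\in\mathbb{C}G$, where one must invert $1-f$. The clean way around it is to never leave the realm of spectral \emph{radii}: prove isoradiality $\rho_1(f)=\rho_h(f)$ on $\mathbb{C}G$ using the pointwise majorant $|f^{(n)}|\le|f|^{(n)}$ and submultiplicativity, so that the co-amenability equality is only ever invoked for the positive functions $|f|^{(n)}$, and then cite Nica to convert isoradiality-on-a-dense-subalgebra directly into an isomorphism in $K$-theory (which is what relative spectrality delivers). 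A secondary technical point is making sure the polynomial-growth estimates are applied to the Schreier graph ${\bf S}(G,H,S)$ rather than to a would-be quotient group, since $H$ need not be normal; Lemma \ref{lem:polygrowth} and Remark \ref{rmk:equiv(2,1)} are exactly what license this.
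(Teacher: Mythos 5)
Your second step (the polynomial-growth case) is essentially the paper's argument and is fine, but the first and main step has a genuine gap. You try to deduce the isoradiality $\rho_1(f)=\rho_h(f)$ on all of $\cg$ from the co-amenability equality $\|f\|_h=\|f\|_1$, which is only valid on $\mathbb{R}_+G$. The majorization $|f^{(n)}|\le |f|^{(n)}$ only yields $\rho_1(f)\le\rho_h(|f|)$, and $\rho_h(|f|)$ can be strictly larger than $\rho_h(f)$; the parenthetical ``standard fact'' you invoke to bridge this, namely that $\rho_1(f)<1$ iff $\rho_1(|f|)<1$, is false already in $\ell^1(\mathbb{Z})$: for $f=t(\delta_{-1}+i\delta_0+\delta_1)$ one has $\rho_1(f)=t\sqrt{5}$ while $\rho_1(|f|)=3t$, so any $t$ with $\tfrac13<t<\tfrac1{\sqrt{5}}$ gives a counterexample. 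Moreover, invertibility of $1-f$ in $\cshyb$ gives no bound on $\rho_h(|f|)$, so the Neumann-series patch cannot even start. There is also a structural reason why a proof using only co-amenability cannot exist: taking $H=\{e\}$, your argument would show that $\eog$ is relatively spectral in $C^*_r(G)$ for every amenable $G$, contradicting the quasi-hermitian obstruction recalled in the paper (for $G=\mathbb{Z}\ltimes_\alpha\mathbb{Z}^2$ one has $\rho_1(x)\neq\rho_*(x)$ for suitable $x\in\cg$). The hypothesis that $G/H$ has subexponential growth must be used quantitatively, and this is what the paper does: for any complex-valued $f$ supported in $B(R)$, Cauchy--Schwarz on the Schreier ball gives $\|f\|_1=\|\pi_\sharp(|f|)\|_1\le\sqrt{\gamma_{G/H}(H,R)}\,\|f\|_{(2,1)}\le\sqrt{\gamma_{G/H}(H,R)}\,\|f\|_h$; applying this to $f^{(n)}$ (supported in $B(nR)$), taking $n$-th roots and using subexponentiality kills the growth factor and yields $\rho_1(f)\le\rho_h(f)$ for \emph{every} $f\in\cg$. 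This is exactly what the isoradial-implies-spectral mechanism requires, since it must be applied to general elements of the form $1-xa$, not to positive ones.

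In the polynomial-growth part your estimates coincide with the paper's (the same $\|g\|_1\le Q(R)\|g\|_{(2,1)}$ bound on $B(R)$ and the same root-and-limit argument giving $\rho_1=\rho_s$ on $\cg$), but note that your inclusion chain is written backwards: when $G/H$ has polynomial growth one has ${\bf H}^s_{\ell}(G,H)\subseteq\eog\subseteq\cshyb$ for $s$ large enough, not $\eog\subseteq{\bf H}^s_{\ell}(G,H)$ (multiplication by $(1+\ell)^s$ does not preserve $\eog$). The lemma asserts spectrality of the embedding of ${\bf H}^s_{\ell}(G,H)$ into $\eog$, and your isoradiality computation is the right ingredient for that once the inclusion is stated in the correct direction.
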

\begin{proof}
The density part is clear since those algebras are all completions of $\cg$. 
Let $f\in \mathbb{C}G$ with support in $B(R)$, by Remark  
\ref{rmk:injection l^1} we know that 
$\rho_{h}(f)\leq \rho_{1}(f)$ (see Notation \ref{not:spec}). Moreover $f^{(n)}$, the $n$-th convolution of $f$ with itself, is supported on ball of radius $nR$. The argument in Lemma \ref{lem:polygrowth} implies that
\[
\|f\|_{1}\leq \sqrt{\gamma_{G/H}(H,R)}
\|f\|_{(2,1)}\leq \sqrt{\gamma_{G/H}(H,R)}\|f\|_{h},
\]
so that, if $G/H$ has intermediate growth we get 
\[
\rho_{1}(f)=\lim_{n\to\infty}\|f^{(n)}\|_{1}^{\frac{1}{n}}\leq\lim_{n\to\infty}\left(\gamma_{G/H}(H,nR)\right)^{\frac{1}{2n}}\|f^{(n)}\|_{h}^{\frac{1}{n}}=
\rho_{h}(f),
\]
and $\eog$ is a dense relatively spectral Banach sub-algebra of 
$\cshyb$. In the particular case where $G/H$ has polynomial growth, then the pair $(G,H)$ has the rapid decay property according to Lemma \ref{lem:polygrowth} and for $s$ large enough ${\bf H}^s_{\ell}(G,H)\subseteq\eog$. Moreover, for all $n\in\mathbb{N}^*$ we have that
 \[
\|f^{(n)}\|_{s,(2,1)}\leq\|f^{(n)}\|_{h}\leq \|f^{(n)}\|_{1}\leq K \|f^{(n)}\|_{s,(2,1)}.
 \]
By taking $n$-th roots and a limit as $n$ goes to infinity, we deduce that $\rho_{s}(f)=\rho_{1}(f)=\rho_h(f)$.
\end{proof}
\begin{rmk}
The inclusion ${\bf H}_{\ell}^s(G,H)\subseteq C^*_r(G)$ of Lemma \ref{lem: Bs s'injecte dans C*} is in general not relatively spectral. When it is, then $\ell^1(H)$ is a relatively spectral sub-algebra of $C^*_r(H)$. The converse is false, as for instance in the solvable group $G=\mathbb{Z}\ltimes_\alpha \mathbb{Z}^2$ where $\alpha\in SL_2(\mathbb{Z})$ has an eigenvalue strictly greater than $1$, then $G$ contains a free semi-group of rank 2 (see \cite{chou}). Moreover, if $s$ and $t$ generate a free semi-group of rank 2, it was shown in \cite{samei2020quasi} that there exists 
$a_0,a_1, a_2\in\mathbb{C}$ such that for $x=a_0\delta_s+a_1\delta_{st}+a_2\delta_{st^2}$, then 
$\rho_{1}(x)=1$ whereas $\rho_*(x)<1$. Since $G/\mathbb{Z}^2=\mathbb{Z}$ is of polynomial growth, the pair $(G,{\mathbb{Z}^2})$ has the rapid decay property, and because of Lemma \ref{lem:spectral l^1}, we have
$\rho_{s}(x)=\rho_{1}(x)\neq \rho_*(x)$, showing that both spectra are different.
\end{rmk}
\begin{lem}
\label{lem: H sous-exp est sobolev spectral}
If $(G,H)$ is a pair of groups with the rapid decay property and $H$ is a subgroup of subexponential growth for the induced length, then for $s$ large enough, for all $f\in \mathbb{C}G$ we have 
\[
\lim_{n\to\infty} \|f^{(n)}\|^{\frac{1}{n}}_{s,2}=\lim_{n\to\infty} \|f^{(n)}(1+\ell)^s\|^{\frac{1}{n}}_{2}= \rho_{s}(f).
\]
\end{lem}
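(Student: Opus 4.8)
The plan is to sandwich the Sobolev norm of iterated convolutions between the hybrid Sobolev norm and a slightly larger hybrid Sobolev norm, using that $H$ has subexponential growth to absorb the discrepancy in the limit. First I would observe that the inequality $\|g\|_{s,2}\le\|g\|_{s,(2,1)}$ always holds (Remark \ref{rmk:injection l^1} applied to $g(1+\ell)^s$), so $\lim_n\|f^{(n)}\|_{s,2}^{1/n}\le\lim_n\|f^{(n)}\|_{s,(2,1)}^{1/n}=\rho_s(f)$. The content is the reverse inequality.

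For the reverse direction, I would exploit the structure of the hybrid norm via cosets. Fix $f\in\mathbb{C}G$ supported on $B(R)$; then $f^{(n)}$ is supported on $B(nR)$, so each restriction $f^{(n)}|_{gH}$ is supported on $gH\cap B(nR)$, which has diameter at most $2nR$ inside the coset and hence, after translating into $H$, lies in a ball of radius $2nR$ (for the induced length) in $H$. Since $H$ has subexponential growth, Remark \ref{rmk : pol growth}'s proof gives $\|f^{(n)}|_{gH}\|_1\le\sqrt{\gamma_H(2nR)}\,\|f^{(n)}|_{gH}\|_2$, and more carefully, weighting by $(1+\ell)^s$ which is bounded below by $1$ and above by $(1+nR)^s$ on the support, one gets
\[
\|f^{(n)}(1+\ell)^s|_{gH}\|_1\le \sqrt{\gamma_H(2nR)}\,\|f^{(n)}(1+\ell)^s|_{gH}\|_2.
\]
Squaring, summing over cosets $gH$, and using the definition of $\|\cdot\|_{s,(2,1)}$ and $\|\cdot\|_{s,2}$, this yields $\|f^{(n)}\|_{s,(2,1)}\le\sqrt{\gamma_H(2nR)}\,\|f^{(n)}\|_{s,2}$. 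Taking $n$-th roots and $n\to\infty$, the subexponential growth of $H$ forces $\lim_n\gamma_H(2nR)^{1/(2n)}=1$, so $\rho_s(f)=\lim_n\|f^{(n)}\|_{s,(2,1)}^{1/n}\le\lim_n\|f^{(n)}\|_{s,2}^{1/n}$, and the three quantities coincide. Here I am using Proposition \ref{prop: Bs est une Banach alg} to know that $\textbf{H}_\ell^s(G,H)$ is a Banach algebra for $s$ large, so that $\rho_s(f)=\lim_n\|f^{(n)}\|_{s,(2,1)}^{1/n}$ makes sense.

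The main obstacle is the bookkeeping with the weight $(1+\ell)^s$: on a single coset the values of $\ell$ range over an interval whose length is comparable to $nR$, not bounded, so one cannot simply pull the weight out of the $\ell^1$–$\ell^2$ comparison as a constant. The clean fix is that the Cauchy–Schwarz step comparing $\|\cdot\|_1$ and $\|\cdot\|_2$ on a ball of radius $\rho$ only needs the cardinality of that ball, and applies verbatim to the weighted functions $f^{(n)}(1+\ell)^s|_{gH}$ without ever isolating the weight; the cardinality bound $\gamma_H(2nR)$ is all that enters, and its $(2n)$-th root tends to $1$ precisely by subexponentiality. A secondary, routine point is checking that a coset $gH$ meeting $B(nR)$ is contained, after translation by a minimal-length representative, in an $H$-ball of radius at most $2nR$; this is the triangle inequality together with $\ell$ restricted to $H$ dominating the induced length.
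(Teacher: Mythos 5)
Your proposal is correct and follows essentially the same route as the paper: bound $\|f^{(n)}\|_{s,2}\le\|f^{(n)}\|_{s,(2,1)}$ one way, and the other way use Cauchy--Schwarz coset by coset to get $\|f^{(n)}\|_{s,(2,1)}\le\sqrt{\gamma_H(2nR)}\,\|f^{(n)}\|_{s,2}$, then let subexponential growth of $H$ kill the factor in the $n$-th root limit, with Proposition \ref{prop: Bs est une Banach alg} justifying $\rho_s(f)=\lim_n\|f^{(n)}\|_{s,(2,1)}^{1/n}$. Your care with the radius ($2nR$ after translating a coset into $H$) is a slightly more precise version of the paper's $|B_H(nR)|$ and changes nothing in the limit.
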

\begin{proof}
Let $f\in\mathbb{C}G$ with support in $B(R)$, denote by $B_H(R)$ the intersection of $B(R)$ with $H$. Using Cauchy-Schwarz inequality and Remark \ref{rmk:injection l^1} we get the following:
\[
\|f^{(n)}\|_{s,(2,1)}^{\frac{1}{n}}\leq \sqrt{|B_H(nR)|}^{\frac{1}{n}}\|f^{(n)}\|^{\frac{1}{n}}_{s,2}\leq \sqrt{|B_H(nR)|}^{\frac{1}{n}}\|f^{(n)}\|_{s,(2,1)}^{\frac{1}{n}}.
\]
According to Proposition \ref{prop: Bs est une Banach alg} for $s$ large enough ${\bf H}^s_{\ell}(G,H)$ is a Banach algebra, so we conclude by taking the limit as $n$ tends to infinity.
\end{proof}
\begin{cor}
\label{cor: cas spectral dans C*}
If $(G,H)$ is a pair of groups with the rapid decay property and $H$ is a subgroup of subexponential growth for the induced length from $G$, then ${\bf H}_{\ell}^s(G,H)$ is a relatively spectral sub-algebra of $C^*_r(G)$.
\end{cor}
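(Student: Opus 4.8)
The plan is to combine Lemma \ref{lem: H sous-exp est sobolev spectral} with the standard isoradiality criterion recalled before the proof of Theorem \ref{prop:isoKtheory}, together with the spectral characterization of Jolissaint's Sobolev spaces inside $\cstr$. First I would note that, by Proposition \ref{prop: Bs est une Banach alg}, for $s$ large enough ${\bf H}^s_\ell(G,H)$ is a Banach algebra, and by Lemma \ref{lem: Bs s'injecte dans C*} it embeds in $\cstr$ (enlarging $s$ if necessary so that both conclusions hold simultaneously). Since both algebras are completions of $\cg$, the inclusion has dense range, so by Nica's theorem it suffices to show the inclusion is \emph{isoradial} on $\cg$, i.e. $\rho_s(f)=\rho_*(f)$ for every $f\in\cg$.

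The key input is Lemma \ref{lem: H sous-exp est sobolev spectral}, which, under the hypotheses of the corollary ($H$ of subexponential growth for the induced length and $(G,H)$ with rapid decay), gives for $s$ large enough and all $f\in\cg$
\[
\rho_s(f)=\lim_{n\to\infty}\|f^{(n)}(1+\ell)^s\|_2^{\frac1n}=\lim_{n\to\infty}\|f^{(n)}\|_{s,2}^{\frac1n}.
\]
So the relative spectral radius in the \emph{hybrid} Sobolev space coincides with the one computed in Jolissaint's \emph{ordinary} $\ell^2$-Sobolev space ${\bf H}^s_\ell(G)$. It therefore remains to see that this last quantity equals $\rho_*(f)$. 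That is exactly the classical fact underlying the usual rapid decay story: the inclusion ${\bf H}^s_\ell(G)\hookrightarrow\cstr$ is isoradial (Propositions 8.10 and 8.16 of \cite{valette}, via Jolissaint and Ji). But one must be a little careful: here $G$ itself is \emph{not} assumed to have rapid decay, so ${\bf H}^s_\ell(G)$ need not be a Banach algebra and one cannot directly invoke that statement. The point is that one only needs the chain of inequalities $\rho_*(f)\le\rho_s(f)$ (from $\|\cdot\|_*\le\|\cdot\|_h\le C\|\cdot\|_{s,(2,1)}$, i.e. Lemma \ref{lem: Bs s'injecte dans C*}) together with the reverse inequality $\rho_s(f)\le\rho_*(f)$. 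For the reverse inequality I would use Lemma \ref{lem: H sous-exp est sobolev spectral} to replace $\rho_s(f)$ by $\lim_n\|f^{(n)}(1+\ell)^s\|_2^{1/n}$ and then a Haagerup-type / interpolation estimate: for $f^{(n)}$ supported in $B(nR)$ one has $\|f^{(n)}(1+\ell)^s\|_2\le(1+nR)^s\|f^{(n)}\|_2\le(1+nR)^s\,\rho_*$-type bounds coming from $\|f^{(n)}\|_2\le\|f^{(n)}\|_*$, so that the $n$-th root kills the polynomial factor and yields $\rho_s(f)\le\rho_*(f)$.

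Putting the two inequalities together gives $\rho_s(f)=\rho_*(f)$ for all $f\in\cg$, hence the inclusion ${\bf H}^s_\ell(G,H)\subseteq\cstr$ is isoradial on the dense subalgebra $\cg$, hence relatively spectral with respect to $\cg$, which is the assertion of the corollary. (Composing with Theorem \ref{prop:isoKtheory} one also gets that it induces an isomorphism on $K$-theory, recovering the ``moreover'' part of that theorem.)

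The main obstacle, I expect, is the bookkeeping around the value of $s$: one needs a single $s$ that simultaneously makes ${\bf H}^s_\ell(G,H)$ a Banach algebra (Proposition \ref{prop: Bs est une Banach alg}), makes the embedding in $\cstr$ hold (Lemma \ref{lem: Bs s'injecte dans C*}), and makes the limit identity of Lemma \ref{lem: H sous-exp est sobolev spectral} valid; checking that the maximum of the finitely many thresholds works, and that Remark \ref{rmk:spec_s=t} lets one pass between different admissible values of $s$ without changing the spectral radius, is the only delicate point. The actual estimates are all either quoted or of the routine ``polynomial factor dies under $n$-th roots'' type.
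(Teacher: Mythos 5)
Your argument is correct and follows the paper's overall strategy: both proofs reduce the corollary to the isoradiality statement $\rho_s(f)=\rho_*(f)$ for all $f\in\cg$, obtain $\rho_*(f)\le\rho_s(f)$ from Lemma \ref{lem: Bs s'injecte dans C*}, and use Lemma \ref{lem: H sous-exp est sobolev spectral} (the only place where subexponential growth of $H$ enters) to identify $\rho_s(f)$ with $\lim_n\|f^{(n)}(1+\ell)^s\|_2^{1/n}$. The one genuine divergence is how the reverse inequality $\rho_s(f)\le\rho_*(f)$ is closed. The paper interpolates via H\"older between two exponents $t\le s$, obtaining $\|f^{(n)}\|_2\ge\|f^{(n)}(1+\ell)^t\|_2^{\frac{s}{s-t}}\|f^{(n)}(1+\ell)^s\|_2^{\frac{-t}{s-t}}$, and then uses Remark \ref{rmk:spec_s=t} together with Lemma \ref{lem: H sous-exp est sobolev spectral} at both exponents to see that the right-hand side tends to $\rho_s(f)$, while $\|f^{(n)}\|_2^{1/n}\le\|f^{(n)}\|_*^{1/n}\to\rho_*(f)$. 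You instead use the support bound $\|f^{(n)}(1+\ell)^s\|_2\le(1+nR)^s\|f^{(n)}\|_2\le(1+nR)^s\|f^{(n)}\|_*$, whose polynomial factor dies under $n$-th roots; this is valid (the support of $f^{(n)}$ lies in $B(nR)$, and $\|g\|_2\le\|g\|_*$ for every $g\in\cg$) and is, if anything, more direct, since it needs only one exponent $s$ and uses Remark \ref{rmk:spec_s=t} merely for bookkeeping. Your handling of the threshold on $s$ (taking the maximum of the values required by Proposition \ref{prop: Bs est une Banach alg}, Lemma \ref{lem: Bs s'injecte dans C*} and Lemma \ref{lem: H sous-exp est sobolev spectral}, all stable under increasing $s$) is fine, as is your caution in not invoking the classical rapid decay statement for $G$ itself, which is indeed unavailable here.
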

\begin{proof}
According to Lemma \ref{lem: Bs s'injecte dans C*} there is $s$ large enough such that, for all $f\in\mathbb{C}G$, we have $\rho_*(f)\leq \rho_{s}(f)$. On the other hand, by  Hölder inequality, for $t\leq s$ we always have
 \[
\|f(1+\ell)^t\|_{2}\leq \|f(1+\ell)^s\|_{2}^{\frac{t}{s}}\|f\|_{2}^{1-\frac{t}{s}}
\]
which implies that for all $n\in\mathbb{N}^*$
\begin{equation}\label{eq:Holder}
 \left(\|f^{(n)}(1+\ell)^s\|_2^{\frac{-t}{s-t}}\|f^{(n)}(1+\ell)^t\|_2^{\frac{s}{s-t}}\right)^{\frac{1}{n}}
\leq\|f^{(n)}\|_{2}^{\frac{1}{n}}\leq\|f^{(n)}\|_*^{\frac{1}{n}}.   
\end{equation}
Since $H$ has subexponential growth in $G$, by Lemma \ref{lem: H sous-exp est sobolev spectral} and combining with Remark \ref{rmk:spec_s=t}, we have 
\[
\lim_{n\to\infty} \|f^{(n)}(1+\ell)^s\|^{\frac{1}{n}}_{2}=\rho_{s}(f)=\rho_{t}(f)=  
\lim_{n\to\infty} \|f^{(n)}(1+\ell)^t\|^{\frac{1}{n}}_{2}
\] 
and we conclude by taking the limit in Equation (\ref{eq:Holder}).
\end{proof}
\subsection{Random walks and the probability of return to a subgroup}
Let $G$ be a group generated by a finite symmetric set $S$. A random walk on $G$ is given by a probability measure 
$\mu\in\mathbb{R}_+G$ with support in $S$, we call it {\it symmetric} if for all $\gamma\in G$, 
$\mu(\gamma^{-1})=\mu(\gamma)$. For such a random walk, the probability to go from $x$ to $y$ after $n$ steps in $G$ is given by 
$\mu^{(n)}(y^{-1}x)$  where $\mu^{(n)}$ denote the $n$-th convolution of $\mu$ with itself (see \cite{woess2000random}). More generally, if $G$ act transitively on a space $X$, for all $x,y\in X$ the \textit{transition probability} (from $x$ to $y$) after $n$ steps is given by 
\[P_n(x,y)=\sum_{z\in X} P_{n-1}(x,z)P(z,y)\]
where for every $x,y\in X$
\[
P(x,y)= \sum_{\{g\in G, g\cdot x=y\}} \mu(g).
\]
If a group $G$ acts transitively on a set $X$, a random walk $\mu$ on $G$ gives a random walk on $X$, and $X$ identifies with $G/H$, where $H$ is a point stabilizer of the $G$-action on $X$, see Remark \ref{rmk:transitive_action=schreier}.
\begin{definition}\label{def:spectralRad}
 Let $H$ be a subgroup of $G$ and ${\bf S}(G,H,S)$ be the associated Schreier graph. We denote by 
 $P_{2n}(H,H)$ the probability of returning on $H$ after $2n$ steps starting from $H$, and the {\it spectral radius of the random walk on ${\bf S}(G,H,S)$} is defined as
 \[
\rho_{\mu}^{G/H}=\limsup_{n\to\infty} P_{2n}(H,H)^{\frac{1}{2n}} .
\]
\end{definition}
\begin{rmk}
\label{rem: traduction random schreier cayley}
Since ${\bf S}(G,H,S)$ is a quotient of the Cayley graph 
of $G$, the probability $P_{2n}(H,H)$ can be viewed as the probability of ending at an element  $h\in H$ after $2n$ steps starting from another element $h'\in H$, namely
$$P_{2n}(H,H)=\sum_{h\in H} \mu^{(2n)}(h).$$
\end{rmk}
\begin{lem}
\label{lem:proba de retour sur H}
For $(G,H)$ a pair of groups and $\mu$ a symmetric random walk on $G$ then
$$P_{2n}(H,H)=\|\mu^{(n)}\|_{(2,1)}^2.$$
\end{lem}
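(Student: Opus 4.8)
The plan is to compute $P_{2n}(H,H)$ directly from Remark \ref{rem: traduction random schreier cayley} and match it with the $(2,1)$-norm of $\mu^{(n)}$ via the coset decomposition. First I would write $P_{2n}(H,H)=\sum_{h\in H}\mu^{(2n)}(h)$, and then use that $\mu^{(2n)}=\mu^{(n)}*\mu^{(n)}$ together with the symmetry $\mu(\gamma^{-1})=\mu(\gamma)$, which passes to convolution powers so that $(\mu^{(n)})^{\star}=\mu^{(n)}$, where $f^{\star}(x)=f(x^{-1})$ as in the excerpt. Unfolding the convolution at an element $h\in H$ gives
\[
\mu^{(2n)}(h)=\sum_{z\in G}\mu^{(n)}(z)\,\mu^{(n)}(z^{-1}h)=\sum_{z\in G}\mu^{(n)}(z)\,\mu^{(n)}((h^{-1}z)^{-1}),
\]
and I would use symmetry to rewrite the second factor as $\mu^{(n)}(h^{-1}z)$.

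The key step is then to reorganize the double sum $\sum_{h\in H}\sum_{z\in G}\mu^{(n)}(z)\mu^{(n)}(h^{-1}z)$ by grouping $z$ according to its left coset $zH\in G/H$ — or more precisely, noting that as $z$ ranges over a fixed coset $gH$ and $h$ ranges over $H$, the products $\mu^{(n)}(z)\mu^{(n)}(h^{-1}z)$ are exactly the products of all pairs of values of $\mu^{(n)}$ restricted to the coset $gH$ (since $h^{-1}z$ also lies in $gH$ whenever $z$ does and $h\in H$). Therefore the contribution of the coset $gH$ is $\Big(\sum_{x\in gH}\mu^{(n)}(x)\Big)^2$. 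Since $\mu^{(n)}$ is a nonnegative function, $\sum_{x\in gH}\mu^{(n)}(x)=\|\mu^{(n)}|_{gH}\|_1$, and summing over all cosets gives
\[
P_{2n}(H,H)=\sum_{gH\in G/H}\|\mu^{(n)}|_{gH}\|_1^2=\|\mu^{(n)}\|_{(2,1)}^2,
\]
by Definition \ref{def:(2,1)-norm}.

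The only mild subtlety — the point I would be most careful about — is the bookkeeping in the reindexing: one must check that the map $(h,z)\mapsto(z, h^{-1}z)$ sets up a clean correspondence so that, coset by coset, $\{(z,h^{-1}z):z\in gH,\ h\in H\}$ is precisely $gH\times gH$, with no double counting and no omission. This is immediate because for fixed $z\in gH$ the map $h\mapsto h^{-1}z$ is a bijection of $H$ onto $gH$, so the inner double sum factors as claimed. An equivalent and perhaps cleaner route would be to invoke part of the machinery already set up: using that left translation is an isometry of $\etogh$ and that $\mu^{(n)}=(\mu^{(n)})^{\star}$, one has $P_{2n}(H,H)=\sum_{h\in H}(\mu^{(n)}\ast\mu^{(n)})(h)=\sum_{h\in H}((\mu^{(n)})^{\star}\ast(\mu^{(n)})^{\star})(h)$, which is exactly the quantity appearing in condition (4) of Proposition \ref{prop:equivalence rd_H} with $f\ast\phi$ replaced by $\mu^{(n)}$, and the final computation in the proof of that proposition shows this equals $\|\mu^{(n)}\|_{(2,1)}^2$. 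Either way, no real obstacle arises; this is essentially a normalization lemma.
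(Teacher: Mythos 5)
Your main computation breaks down at exactly the point you flagged as the ``mild subtlety''. After rewriting $P_{2n}(H,H)=\sum_{h\in H}\sum_{z\in G}\mu^{(n)}(z)\,\mu^{(n)}(h^{-1}z)$, you claim that for $z\in gH$ and $h\in H$ the element $h^{-1}z$ again lies in $gH$, and that $h\mapsto h^{-1}z$ is a bijection of $H$ onto $gH$. This is false unless $H$ is normal in $G$: for fixed $z$ the set $\{h^{-1}z\ :\ h\in H\}$ is the \emph{right} coset $Hz$, not the left coset $zH=gH$, so the claimed correspondence $\{(z,h^{-1}z):z\in gH,\ h\in H\}=gH\times gH$ fails. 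Done correctly, your grouping produces $\sum_{Hg\in H\backslash G}\bigl(\sum_{z\in Hg}\mu^{(n)}(z)\bigr)^2$, a right-coset quantity which is not literally the norm of Definition \ref{def:(2,1)-norm}. The gap is repairable in two ways: either use the symmetry of $\mu^{(n)}$ a second time (inversion $z\mapsto z^{-1}$ maps $Hg$ bijectively onto $g^{-1}H$ and preserves the values of $\mu^{(n)}$, so the right-coset sum equals $\sum_{gH\in G/H}\|\mu^{(n)}|_{gH}\|_1^2$), or apply the symmetry to the \emph{first} factor instead, writing $\mu^{(2n)}(h)=\sum_{z}\mu^{(n)}(z^{-1})\mu^{(n)}(zh)$ so that the inner sum over $h$ runs over the left coset $zH$; the latter is precisely the paper's computation.

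Your fallback route at the end is sound: since $(\mu^{(n)})^{\star}=\mu^{(n)}$ (which does follow from the symmetry of $\mu$, as $(f_1*\cdots*f_n)^{\star}=f_n^{\star}*\cdots*f_1^{\star}$), the quantity $\sum_{h\in H}\mu^{(n)}*\mu^{(n)}(h)$ has the form $\sum_{h\in H}u^{\star}*u(h)$ with $u=\mu^{(n)}\geq 0$, and the computation at the end of the proof of Proposition \ref{prop:equivalence rd_H} gives exactly $\|u\|_{(2,1)}^2$. So the lemma is recovered by that argument, but the coset bookkeeping in your primary proof, as written, is incorrect for non-normal $H$.
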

\begin{proof}
Using Remark \ref{rem: traduction random schreier cayley}, we compute
\begin{eqnarray*}
P_{2n}(H,H)&=& \sum_{h\in H} \mu^{(2n)}(h)=\sum_{h\in H} \mu^{(n)}*\mu^{(n)}(h)=\sum_{h\in H}\sum_{z\in G}\mu^{(n)}(z^{-1})\mu^{(n)}(zh)\\
&=&\sum_{gH\in G/H}\sum_{z\in gH}\underbrace{\mu^{(n)}(z^{-1})}_{\mu^{(n)}(z)}\sum_{h\in H}\mu^{(n)}(zh)
\end{eqnarray*}
For any $z\in gH$, the last term of the above sum reads
\[
\sum_{h\in H}\mu^{(n)}(zh)=\sum_{y\in gH}\mu^{(n)}(y)=\|\mu^{(n)}|_{gH}\|_1
\]
Hence we conclude that
\[
P_{2n}(H,H)=\sum_{gH\in G/H}\|\mu^{(n)}|_{gH}\|_1^2=\|\mu^{(n)}\|_{(2,1)}^2.
\]
\end{proof}
\begin{prop}
\label{prop:RWspec=hyb}
Let $\mu$ be a symmetric random walk on a group $G$. Then, for any subgroup $H<G$,
$$\rho_\mu^{G/H}=\|\mu\|_{h}.$$
\end{prop}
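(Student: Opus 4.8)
The plan is to identify the spectral radius $\rho_\mu^{G/H}$ with the operator norm of $\lambda_{G/H}(\mu)$ and then invoke Lemma \ref{lem:operateur quotient vs hybride} (2), which already tells us that $\|f\|_h=\|\lambda_{G/H}(f)\|_*$ for all $f\in\mathbb{R}_+G$, and $\mu$ is such a function. So the real content is to show that $\rho_\mu^{G/H}=\|\lambda_{G/H}(\mu)\|_*$.

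First I would use Lemma \ref{lem:proba de retour sur H} to write $P_{2n}(H,H)=\|\mu^{(n)}\|_{(2,1)}^2$, so that $\rho_\mu^{G/H}=\limsup_n \|\mu^{(n)}\|_{(2,1)}^{1/n}$. Next I would relate $\|\mu^{(n)}\|_{(2,1)}$ to the operator norm $\|\lambda_{G/H}(\mu^{(n)})\|_*$. On one side, Lemma \ref{lem:operateur quotient vs hybride} gives $\|\mu^{(n)}\|_{(2,1)}\le \|\mu^{(n)}\|_h=\|\lambda_{G/H}(\mu^{(n)})\|_*=\|\lambda_{G/H}(\mu)^n\|_*$ because $\lambda_{G/H}$ is an algebra homomorphism; taking $n$-th roots and the limit gives $\rho_\mu^{G/H}\le \|\lambda_{G/H}(\mu)\|_*$ (using that the norm of a self-adjoint operator equals the limit of $\|T^n\|^{1/n}$, and $\lambda_{G/H}(\mu)$ is self-adjoint since $\mu$ is symmetric). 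On the other side, I would use that $P_{2n}(H,H)=\langle \lambda_{G/H}(\mu)^{2n}\delta_H,\delta_H\rangle$ where $\delta_H\in\ell^2(G/H)$ is the unit vector at the base coset; since $\lambda_{G/H}(\mu)$ is a positive self-adjoint contraction, the spectral theorem applied to the vector $\delta_H$ — which is cyclic enough, or at least: $\limsup_n \langle T^{2n}v,v\rangle^{1/2n}$ equals the supremum of the support of the spectral measure of $v$, and one checks $\delta_H$ "sees" the full spectrum because the support of $\mu$ generates $G$ so the orbit of $\delta_H$ spans a dense subspace — one gets $\rho_\mu^{G/H}=\|\lambda_{G/H}(\mu)\|_*$. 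Then the chain $\rho_\mu^{G/H}=\|\lambda_{G/H}(\mu)\|_*=\|\mu\|_h$ closes the argument.

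The cleanest route actually avoids worrying about cyclicity: since $\mu^{(n)}\in\mathbb{R}_+G$, Lemma \ref{lem:operateur quotient vs hybride} (2) gives exactly $\|\mu^{(n)}\|_h=\|\lambda_{G/H}(\mu^{(n)})\|_*$, and the inequality $\|f\|_2\le\|f\|_{(2,1)}$ from Remark \ref{rmk:injection l^1} together with $\|\mu^{(n)}\|_2^2=\mu^{(2n)}(e)\le P_{2n}(H,H)=\|\mu^{(n)}\|_{(2,1)}^2\le\|\mu^{(n)}\|_h^2$ sandwiches everything; taking $2n$-th roots, the left end tends to $\rho_*(\mu)=\|\lambda_G(\mu)\|_*$... but this is for the regular representation, not the quasi-regular one, so that is not quite what we want. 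The honest statement needed is that $\limsup_n\langle\lambda_{G/H}(\mu)^{2n}\delta_H,\delta_H\rangle^{1/2n}=\|\lambda_{G/H}(\mu)\|_*$, which requires the spectral-measure support argument. I would therefore spell out: let $E$ be the projection-valued measure of the self-adjoint contraction $T=\lambda_{G/H}(\mu)$, let $\nu$ be the scalar measure $\langle E(\cdot)\delta_H,\delta_H\rangle$ on $[-1,1]$; then $P_{2n}(H,H)=\int \lambda^{2n}\,d\nu$, whose $2n$-th root tends to $\max\{|\lambda|:\lambda\in\operatorname{supp}\nu\}$; and $\operatorname{supp}\nu$ reaches $\|T\|$ because for any $\lambda_0\in\sigma(T)$ and any $\epsilon>0$, $E([\lambda_0-\epsilon,\lambda_0+\epsilon])\ne 0$ and its range is $G$-invariant-generating, so it cannot be orthogonal to the cyclic subspace generated by $\delta_H$, which is all of $\ell^2(G/H)$ since $\operatorname{supp}(\mu)$ generates $G$ and the $G$-action on $G/H$ is transitive.

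The main obstacle is this last point — showing $\delta_H$ is a cyclic vector for $\lambda_{G/H}(\mathbb{C}G)$, hence that the scalar spectral measure $\nu$ has support equal to the full spectrum of $T$, so its maximum modulus is $\|T\|$. Once that is in hand the rest is routine: combine $\|T\|=\rho_\mu^{G/H}$ with $\|T\|=\|\lambda_{G/H}(\mu)\|_*=\|\mu\|_h$ from Lemma \ref{lem:operateur quotient vs hybride} (2). I would write the proof in three short steps: (i) $P_{2n}(H,H)=\langle T^{2n}\delta_H,\delta_H\rangle$ via Lemma \ref{lem:proba de retour sur H} or directly; (ii) $\delta_H$ cyclic $\Rightarrow$ $\limsup P_{2n}(H,H)^{1/2n}=\|T\|$; (iii) $\|T\|=\|\mu\|_h$ by Lemma \ref{lem:operateur quotient vs hybride}.
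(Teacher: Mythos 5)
Your overall architecture is the same as the paper's: Lemma \ref{lem:proba de retour sur H} gives $P_{2n}(H,H)=\|\mu^{(n)}\|_{(2,1)}^2\leq\|\mu\|_h^{2n}$, hence $\rho_\mu^{G/H}\leq\|\mu\|_h$; Lemma \ref{lem:operateur quotient vs hybride}~(2) identifies $\|\mu\|_h=\|\lambda_{G/H}(\mu)\|_*$; and the reverse inequality is obtained by applying the spectral theorem to $T=\lambda_{G/H}(\mu)$ (the paper uses $T=\lambda_{G/H}(\mu*\mu^\star)$, an inessential difference) and the vector $\delta_H$. The problem is the step you yourself single out as the main obstacle. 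Your claim ``$\delta_H$ is cyclic for $\lambda_{G/H}(\mathbb{C}G)$, hence the scalar spectral measure $\nu$ of $T$ at $\delta_H$ has support equal to the full spectrum of $T$'' is not a valid implication. The spectral projections $E(I)$ of $T$ are \emph{not} invariant under the representation $\lambda_{G/H}$ (they lie in the von Neumann algebra generated by the representation, not in its commutant), so from $E(I)\neq 0$ and cyclicity of $\delta_H$ under the group algebra you cannot conclude $E(I)\delta_H\neq 0$: what cyclicity rules out is only that $\delta_H$ lies in a proper \emph{invariant} subspace. Indeed the implication is false in general: take $\pi$ the two-dimensional irreducible representation of $S_3$ and $\mu$ uniform on $\{(12),(123),(132)\}$, a symmetric generating probability measure; then $\pi(\mu)=\tfrac13(\pi((12))-I)$ has eigenvalues $0$ and $-\tfrac23$, every nonzero vector is cyclic for $\pi(S_3)$, yet a $+1$-eigenvector of $\pi((12))$ has spectral measure $\delta_0$, so its local spectral radius is $0\neq\|\pi(\mu)\|$. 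So cyclicity for the group algebra cannot be the reason $\operatorname{supp}\nu$ reaches $\|T\|$.

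What makes the claim true for the quasi-regular representation is positivity plus irreducibility of the chain, i.e.\ Kesten's argument: since $\operatorname{supp}\mu$ generates $G$, the Schreier graph is connected, so for every coset $gH$ there are $k\in\mathbb{N}$ and $c>0$ with $c\,\delta_{gH}\leq T^{k}\delta_H$ pointwise, whence $\langle T^{2n}\delta_{gH},\delta_{gH}\rangle\leq c^{-2}\langle T^{2n+2k}\delta_H,\delta_H\rangle$; combining with Cauchy--Schwarz for the positive operator $T^{2n}$ on finitely supported vectors, the inequality $\langle T^{2n}\xi,\xi\rangle\geq\langle T^2\xi,\xi\rangle^{n}$ for unit $\xi$ (Jensen), and density of finitely supported vectors, one gets $\|T\|\leq\limsup_n P_{2n}(H,H)^{1/2n}$. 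This is where the hypothesis that the support of $\mu$ generates $G$ genuinely enters; note the proposition as you (and the paper) state it silently relies on it. To be fair, the paper's own proof is also terse at exactly this point: it writes $K=\nu([\|\mu\|_h^2-\epsilon,\|\mu\|_h^2])$ and uses $K^{1/2n}\to1$, which tacitly requires $K>0$, i.e.\ the very fact discussed above (it refers to Lemma G.4.8 of \cite{bekka2008kazhdan} for the adaptation). So you have correctly isolated the crux, but the justification you propose would not close it; replace the cyclicity argument by the positivity/connectivity argument and the proof is complete.
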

\begin{proof}
That $\rho_{\mu}^{G/H}\leq\|\mu\|_{h}$ follows from Lemma \ref{lem:proba de retour sur H}, since for all $\in\mathbb{N}$,
\[
P_{2n}(H,H)=\|\mu^{(n)}\|_{(2,1)}^2\leq \|\mu\|_{h}^{2n}.
\] 
To show the reverse inequality, we adapt the proof of Lemma G.4.8 in \cite{bekka2008kazhdan}.
Since $\mu\in\mathbb{R}_+G$, 
by Lemma \ref{lem:operateur quotient vs hybride} we have that $\|\mu*\mu^*\|_{h}=\|\mu\|_{h}^2=\|\lambda_{G/H}(\mu)\|^2_{*}$. Let $\delta_{H}\in\ell^2(G/H)$ be the Dirac mass on the class $H$, and consider the operator $T=\lambda_{G/H}(\mu*\mu^*)=\lambda_{G/H}(\mu^2)$. A direct computation shows that 
\[
\langle T^n\delta_{H},\delta_{H} \rangle=\sum_{h\in H} \mu^{(2n)}(h)=P_{2n}(H,H).
\]
Using the spectral theorem for self-adjoint (positive) operator (see Chapter 6 of \cite{birman2012spectral}), there exists a Radon measure $\nu$ (which here is a probability measure) such that, for all $\epsilon>0$, and $n\in\mathbb{N}$ we have 
\[
P_{2n}(H,H)^{\frac{1}{2n}}=\left(\int_{\sigma(T)} x^n d\nu(x)\right)^{\frac{1}{2n}}\geq\left( \int_{\|\mu\|_{h}^2-\epsilon}^{\|\mu\|_{h}^2} x^n d\nu(x)\right)^{\frac{1}{2n}}.
\]
 By setting 
$K=\nu\left([\|\mu\|_{h}^2-\epsilon,\|\mu\|_{h}^2]\right)<\infty$, since $\|\mu\|_{h}\leq 1$ we get that 
\[
P_{2n}(H,H)^{\frac{1}{2n}}\geq K^{\frac{1}{2n}}\sqrt{\|\mu\|_{hyb}^2-\epsilon}> K^{\frac{1}{2n}}
(\|\mu\|_{h}-\epsilon).
\]
Taking the limit as $n\to +\infty$, we obtain that 
$\rho_{\mu}^{G/H} >\|\mu\|_{h}-\epsilon$ and conclude.
\end{proof}
\begin{proof}[Proof of Theorem \ref{thm:borne inf pour RW}] Since $\mu\in\mathbb{R}_+G$, by Lemma \ref{lem:operateur quotient vs hybride}, $\|\mu*\mu^*\|_{h}=\|\mu\|_{h}^2$ and hence $\|\mu\|_{h}^{2n}=\|\mu^{(n)}\|^2$. For any $n\in{\mathbb N}$, using Proposition \ref{prop:RWspec=hyb} and the rapid decay property for the pair $(G,H)$
$$\left(\rho_{\mu}^{G/H}\right)^{2n}=\|\mu^{(n)}\|^2_h\leq (n^d)^2\|\mu^{(n)}\|_{(2,1)}^2$$
and we conclude using Lemma \ref{lem:proba de retour sur H} and rearranging the terms.
\end{proof}
\begin{rmk}
According to Proposition \ref{prop: égalité amenable l^1}, the case $\rho=1$ corresponds to $H$ being a co-amenable subgroup of $G$.  When $H$ is amenable 
$\|\mu\|_*=\|\mu\|_{h}$ (see Corollary \ref{cor:H moyenable}) which implies that the 
the spectral radius of the random walk on the Schreier graph is the same as for the random walk on the Cayley graph. The converse is not true in general, see \cite{Ab_rt_2014} and \cite{anantharaman2003spectral} in which cases this equality implies the amenability of the subgroup $H$.
\end{rmk}
\section{Stability results and open questions}
\subsection{Stability questions}%
The following is a straightforward computation left to the reader.
\begin{prop}Let $(G,H)$ be a pair of groups with the rapid decay property, then for any finitely generated group $K$ the pair $(G\times K,H\times K)$ has the rapid decay property as well, with the same degree. If moreover $K$ has the rapid decay property, then the pair $(G\times K,H)$ has the rapid decay property as well.
\end{prop}
\begin{openquest} Given two pairs of groups with rapid decay $(G_1,H_1)$ and $(G_2,H_2)$, does the pair $(G_1\times G_2, H_1\times H_2)$ have rapid decay as well?
\end{openquest}
One can show that this is true if either one of the $H_i$'s or the quotients $G_i/H_i$ has polynomial growth, but the general case in unclear. 
\begin{openquest} Given two pairs of groups with rapid decay $(G_1,H_1)$ and $(G_2,H_2)$, as well as an extension $G_1\to E\to G_2$, what are the conditions under which this extension restricts to an extension $H_1\to F\to H_2$ such that the pair $(E,F)$ has rapid decay as well? In case we start with a central extension, can that condition be read on the cocycle that determines the extension?
\end{openquest}
\subsection{K-theoretical questions}
Let $G$ be a finitely generated group and $H$ a subgroup of $G$. As seen in Lemma \ref{lem:spectral l^1}, the fact that 
$G/H$ as subexponential growth implies that the inclusion of $\eog$ in $\cshyb$ is relatively spectral with respect to $\cg$ and hence both algebras have the same $K$-theory.
Similarly, when $H$ has subexponential growth for the induced length, the spectral radius of a finitely supported function $f$ in $\cstr$ and $\cshyb$ is the same, and if the pair $(G,H)$ has the rapid decay property both algebras have the same $K$-theory (see Lemma \ref{lem: H sous-exp est sobolev spectral}). In fact it is known that for amenabe groups, we always have $K_i(\eog)\simeq K_i(\cstr)$ because amenable groups satisfy both the Bost and the Baum-Connes conjectures (see \cite{fusion}). The question that we ask is can we "cut" this consequence of amenability in two part in the following way:
\begin{openquest}
Let $H$ be a co-amenable (resp. amenable) subgroup of $G$,
   do $\eog$ (resp $\cstr$) and $\cshyb$ have the same K-theory?
\end{openquest}
Let us recall the following definition, from \cite{lafforgue2002k}
\begin{definition} A norm $\|\ \|_{A}$ on $\cg$ is called \textit{unconditional} if for all $f,\phi\in\cg$ such that $|f(g)|\leq |\phi(g)|$ for all $g\in G$, then $\|f\|_{A}\leq \|f\|_{A}$. In particular, for all 
 $f\in\cg$ we have $\|f\|_{A}=\|\,|f|\,\|_{A}$. A completion of $\cg$ with an unconditional Banach norm is called an {\it unconditional completion}. 
\end{definition}
The amenability of a group $G$ is equivalent to the inclusion $\eog\subseteq\cstr$ being an isometry on the positive valued functions. In this case we obtain the same K-theory, and ask the following
\begin{openquest}
    Let $A(G)$ and $B(G)$ two unconditional completions of $\cg$. Consider the algebras 
    $\mathcal{A}^*_r(G)$ and $\mathcal{B}^*_r(G)$ which are respectively the norm closures of $\lambda(\cg)$ in the bounded operators on $A(G)$ and $B(G)$, where $\lambda$ is the left regular representation.
    Suppose that for all $f\in\mathbb{R}_+G$ we have 
    $\|f\|_{\mathcal{A}^*_r(G)}=\|f\|_{\mathcal{B}^*_r(G)}$, 
    does it implies that  $\mathcal{A}^*_r(G)$ and $\mathcal{B}^*_r(G)$ have the same K-theory?
\end{openquest}
It would also be interesting to know when the K-theoric informations on the subgroup algebra $\ell^1(H)$ is enough to induce an isomorphism with the 
K-theory of $\cstr$. 
\begin{openquest}\label{question avec RD_H}
 Let $G$ be a group and $H<G$ be a subgroup. Suppose that the pair $(G,H)$ the rapid decay property, and that both algebras $\ell^1(H)$ and $C^*_r(H)$ have the same K-theory.
    Does it imply that $\cshyb$ and $\cstr$ have the same K-theory?
\end{openquest}
Notice that removing the rapid decay property assumption on the pair $(G,H)$ gives the following 
\begin{openquest}
\label{question sans RD_H}Let $G$ be a group and $H<G$ be a subgroup. Suppose that $\ell^1(H)$ and $C^*_r(H) $ have the same K-theory. Does it implies that $\cshyb$ and $\cstr$  have the same K-theory? 
\end{openquest}
The question is trivially true for $G=H$, and for $H=\{e\}$ is a reformulation of the combination of the Bost and Baum-Connes conjectures.
\bibliographystyle{alpha}
\bibliography{bibli}

\noindent
Laboratoire J.-A. Dieudonn\'e, Universit\'e C\^{o}te d'Azur, Parc Valrose, 06108 Nice Cedex 02,   France\\
\url{indira@unice.fr, zarka@unice.fr}
\end{document}